\documentclass[12pt,reqno]{amsart}
\usepackage{soul}
\usepackage{multicol,amsmath,graphics, mathtools, cancel,soul,color,bbm,amsfonts,dsfont,tikz,mathrsfs,amssymb,bm,hyperref,comment,bm,stackengine,scalerel,enumitem,multirow,cite}

\usepackage[left=1in, right=1in, top=1.1in,bottom=1.1in]{geometry}
\usetikzlibrary{matrix,patterns,positioning}
\numberwithin{equation}{section}

\hypersetup{
    colorlinks=true, 
    linktoc=all,     
    linkcolor=blue,  
}

\newcommand\Pig[1]{\scalerel*[7.5pt]{\bigg#1}{%
  \ensurestackMath{\addstackgap[1.5pt]{\Big#1}}}}
\newcommand\Pigl[1]{\mathopen{\Pig{#1}}}
\newcommand\Pigr[1]{\mathclose{\Pig{#1}}}
\renewcommand{\le}{\leqslant}
\renewcommand{\leq}{\leqslant}
\renewcommand{\ge}{\geqslant}
\renewcommand{\geq}{\geqslant}

\newcommand{\p}{\Pb}
\newcommand{\cip}{\overset{\Pb}{\to}}
\newcommand*{\pdot}{\mathbin{\scalerel*{\boldsymbol\odot}{\circ}}}
\newcommand{\Oh}{\mathrm{O}}
\newcommand{\oh}{\mathrm{o}}
\DeclareMathOperator*{\argmin}{ar\smash[b]{\mathrm{g}}\,min}
\newcommand{\rnd}{\mathrm{rnd}}
\newcommand{\Det}{\mathrm{det}}
\newcommand{\ua}{\uparrow}
\newcommand{\br}[1]{[{#1}]}

\newcommand{\E}{\mathbb{E}}

\newcommand{\N}{\mathbb{N}}
\newcommand{\Pb}{\mathbb{P}}

\newcommand{\R}{\mathbb{R}}

\newcommand{\vertiii}[1]{{\left\vert\kern-0.25ex\left\vert\kern-0.25ex\left\vert #1
    \right\vert\kern-0.25ex\right\vert\kern-0.25ex\right\vert}}

\DeclareMathOperator\sgn{sgn}

\def\R{\mathbb{R}}

\def\Var{\mathbb{V}\mathrm{ar}}
\def\Cov{\mathbb{C}\mathrm{ov}}

\def\ve{\varepsilon}

\def\dd{\mathrm{d}}
\def\MAD{\mathrm{MAD}}
\def\Det{\mathrm{det}}
\def\rnd{\mathrm{rnd}}
\def\RV{\mathrm{RV}}
\def\Beta{\mathrm{Beta}}
\def\ua{\uparrow}
\def\da{\downarrow}
\def\Ld{\mathfrak{L}}
\def\Kd{\mathfrak{K}}
\def\Wd{\mathfrak{W}}
\def\Pd{\mathfrak{P}}
\def\cid{\overset{\dd}{\to}}
\def\eqd{\overset{\dd}{=}}

\def\ov{\overline}
\def\un{\underline}

\DeclareMathOperator\med{med}
\def\1{\mathbbm{1}}

\newtheorem{thm}{Theorem}[section]
\newtheorem{lemma}[thm]{Lemma}
\newtheorem{cor}[thm]{Corollary}
\newtheorem{prop}[thm]{Proposition}

\newtheorem{definition}[thm]{Definition}
\theoremstyle{rem}
\newtheorem{rem}[thm]{Remark}

\makeatletter
\@namedef{subjclassname@1991}{2020 Mathematics Subject Classification}
\makeatother

\begin{document}

\title[Robust Scale Estimation in Additive Noise]
{Robust Scale Estimation in Additive Noise via Weighted Order Statistics}

\author[J. I. Gonz\'alez C\'azares]{Jorge I. Gonz\'alez C\'azares}
\address{Instituto de Investigaciones en Matem\'aticas Aplicadas y en Sistemas, Universidad Nacional Aut\'onoma de M\'exico, Circuito Escolar S/N, Ciudad Universitaria, Coyoac\'an, C.P. 04510, Ciudad de M\'exico, M\'exico}
\email{jorge.gonzalez@iimas.unam.mx}

\author[A. Jaramillo Gil]{Arturo Jaramillo Gil}
\address{Centro de Investigaci\'on en Matem\'aticas, A.C., Jalisco S/N, Valenciana, Guanajuato, Gto. 36023, M\'exico}
\email{jagil@cimat.mx}

\date{\today}
\begin{abstract}
This manuscript develops a non-parametric and robust framework for estimating the scale of additive noise in weakly sparse systems. The method does not require independence, prescribed dependence, or temporal regularity of the noise sequence. We introduce a class of order-statistic estimators based on comparing the sorted observations with deterministic or random proxies generated from a reference noise distribution. This purely spatial approach avoids preliminary filtering or temporal decorrelation, and therefore preserves the sparsity structure of the latent signal. We establish non-asymptotic concentration inequalities for weighted loss functions, with bounds that separate the contribution of the signal from the discrepancy between the ordered noise and the proxy. We then control this proxy discrepancy in independent and correlated regimes, including heavy-tailed reference laws. Finally, we apply the method to high-frequency observations of continuous-time stochastic processes, obtaining scale estimators for fractional Brownian motion and stable L\'evy noise in the presence of lower-variation additive perturbations.
\end{abstract}

\subjclass[2020]{ 62G35, 62G30, 62F12, 60G22, 60G52, 62M10}

\keywords{Robust scale estimation, order statistics, additive noise, weighted estimators, empirical quantiles, high-frequency observations, fractional Brownian motion, stable L\'evy processes}

\maketitle

\section{Introduction}
\noindent Estimating the scale of the noise in high-dimensional observation vectors corrupted by additive perturbations is a central problem in statistical signal processing, non-parametric regression, high-frequency econometrics, and related areas. In many instances of these problems, the data are represented by a discrete collection $X_k$, for $1\leq k\leq n$, obeying an additive relation 
of the form
\begin{equation}
\label{eq:X=Y+Z}
X_{k} = Y_{k} + \sigma Z_{k},
\end{equation}
where the $Z_k$ are unobserved, identically distributed but not necessarily independent, with common distribution function $\Psi$. The variables $Y_k$ represent an underlying signal component, assumed to satisfy a suitable, possibly weak, notion of sparsity or low variation relative to the noise, and $\sigma$ is an unknown positive scale parameter. We write $\bm{X}$, $\bm{Y}$ and $\bm{Z}$ for the corresponding vectors in $\R^n$.\\

\noindent We study the problem of estimating $\sigma$ from the observed vector $\bm X$ alone. The signal $\bm Y$ and the noise realization $\bm Z$ are unobserved. The estimator uses the reference marginal law $\Psi$ and structural control of the signal component $\bm Y$, but is otherwise agnostic to the joint dependence structure of the noise sequence.\\

\subsection*{Scale estimation in signal and stochastic models}
\noindent In models of the form \eqref{eq:X=Y+Z}, an estimate of $\sigma$ is often needed before deciding which fluctuations should be treated as background noise and which ones should be attributed to the signal. This problem appears naturally in several settings. We mention a few of them, without aiming for exhaustiveness.\\

\noindent The first one is wavelet denoizing, where the noise level fixes the threshold used to separate signal from background fluctuations, as in the universal Donoho-Johnstone threshold \cite{wavelet_shrinkage}.  Another setting is speech enhancement and adaptive beamforming, where one uses filtering to reduce background noise while preserving the signal of interest. In this setting, scale estimation enters noise-tracking and robust filtering procedures, including methods related to Improved Minima Controlled Recursive Averaging and Generalized Sidelobe Canceller architectures, where one also needs to limit the leakage of active signal components into the estimated background~\cite{VOROBYOV2014503}. There is already substantial work on robust versions of these procedures. For instance, recent distributionally robust beamforming methods handle target mismatch through worst-case SINR criteria over uncertainty sets for the interference-plus-noise covariance matrix~\cite{MR4556035,MR4931381}. These methods are effective in that setting, but they remain covariance-based, which can be limiting under very heavy-tailed bursts or infinite-variation interference, where covariance estimates may become unstable. The present work instead targets additive contamination under possible dependence, heavy tails, or weak sparsity.\\

\noindent A similar issue appears in high-frequency econometrics and stochastic analysis. For high-frequency observations of an It\^o semimartingale, estimating the continuous volatility component in the presence of jumps often relies on truncation-based thresholding or multipower variations~\cite{MR3235234}. These methods are very effective, but their implementation usually requires some preliminary calibration, such as an initial volatility estimate, a threshold adapted to the scale, or information on the jump activity \cite{MR3224283,MR4140022,MR4774175}. This is precisely the type of situation in which one wants to isolate a scalar volatility parameter from additional jump or signal components.\\

\subsection*{Classical robust scale estimators}
\noindent There is already a good collection of standard robust alternatives for scale estimation. Among them, it is worth mentioning the median absolute deviation (MAD), rescaled by a consistency factor~\cite{hampel1986robust}, and the estimators of Rousseeuw and Croux~\cite{MR1245360}.\\

\noindent These classical estimators are useful in a variety of scenarios, but they are not fully agnostic to the structure of the data in the sense needed here. In particular, their standard interpretation is tied to settings where the noise behaves essentially as an independent sample, and where the contamination can be handled through central order information. This interpretation becomes restrictive, and may lead to unstable scale calibration, when the noise sequence $\bm Z$ has significant dependence, for instance in long-range dependent or fractional regimes, or when the reference law $\Psi$ is heavy-tailed or multimodal. We instead allow dependence in the noise sequence $\bm Z$ and only assume that the signal $\bm Y$ satisfies a weak sparsity or low-variation condition. In this regime, an estimator based only on a few central quantiles, or on a preliminary transformation of the data such as a wavelet transform used to reduce dependence, may miss part of the information contained in the sample. Such transformations can also spread localized spikes or discontinuities across several coefficients, thereby weakening the sparsity structure of $\bm Y$ and introducing additional oscillations.\\

\subsection*{Main contribution}
\noindent The main contribution of this paper is to develop and analyze a purely spatial, permutation-based scale estimator based on the ordered observations. We sort the entries of $\bm X$ and align them with a reference profile determined by the noise distribution $\Psi$. In this way, the estimator avoids preliminary temporal filtering, which may spread localized signal components, and does not rely on covariance-type quantities, predefined uncertainty sets, or iterative matrix relaxations, which may be unstable or difficult to specify under dependence, heavy tails, or additive signal contamination.\\

\noindent This viewpoint leads to a family of $L^r$-type estimators based on weighted order statistics. We compare the sorted vector $\bm X^\uparrow$ with a proxy profile $\bm\psi$, constructed either from the quantiles of $\Psi$ or from an independent sample with law $\Psi$, through weighted $L^r$ losses, with the quadratic case taking an inner-product form. The weights, chosen according to the reference profile, allow us to emphasize the central part of the ordered sample, where the noise bulk is expected to remain visible, and downweight the extremes, where sparse signal perturbations are expected to concentrate after sorting. The analysis then exploits the stability of the sorting map in $\ell^p$ norms to obtain non-asymptotic bounds that separate the contribution of the noise profile from the variation of the signal.

\subsection*{Organization of the paper}
\noindent The rest of the paper is organized as follows. In Section~\ref{sec:weighted_estimators}, we introduce the weighted order-statistic estimators, describe the deterministic and random proxy constructions, and prove the main non-asymptotic bounds for $r\in\{1,2\}$. We also discuss robustness and breakdown properties of the weighted median estimator. Section~\ref{sec:weighted-proxies} studies the coefficients that appear in the main deterministic error bound. Section~\ref{sec:proxy_control} studies the distance between sorted noise samples and their proxies, including both deterministic and random proxy constructions, in the i.i.d.\ and correlated settings. Finally, Section~\ref{sec:sde_applications} applies the framework to high-frequency observations of stochastic processes and to scalar volatility estimation.

\section{Weighted order-statistic estimators}
\label{sec:weighted_estimators}

\noindent Throughout this section, we write $\br{n}=[1,n]\cap\mathbb{Z}$. Our estimators of the scale parameter $\sigma$ are based on the ordered representation of the sample. Before continuing further with the formal construction, we first describe the intuition behind it.

\subsection{Heuristic considerations}
\label{subsec:heurcons}

Let us denote by 
\[
\bm{X}^\ua=\{X^\ua_k;\ k\in\br{n}\}
\]
the non-decreasing rearrangement of $\bm X$. The main idea behind our estimate consists of working  with $\bm X^\ua$ instead of $\bm X$, based on the heuristic that after sorting, the leading term is the ordered noise $\sigma\bm Z^\ua$, while the signal contributes a perturbation whose size can still be controlled. More precisely, if $\|\cdot\|_p$ denotes the usual $p$-norm in $\R^n$, then we can write
\[
\bm X^\ua
=
\sigma\bm Z^\ua+\widetilde{\bm Y},
\]
for some vector $\widetilde{\bm Y}$ satisfying
\[
\|\widetilde{\bm Y}\|_p
\leq
\|\bm Y\|_p,
\]
as explained in more detail in Section \ref{se:NAEBFE}. Thus, after sorting, the signal contribution remains controlled by the original size of $\bm Y$ measured in a suitable metric. In the applications considered in this manuscript, the relevant normalization makes this term vanish asymptotically. The previous discussion suggests the heuristic approximation
\[
\bm X^\ua\approx
\sigma\bm Z^\ua.
\]
It remains to approximate the unobserved ordered noise vector $\bm Z^\ua$. If $\Psi^{-1}$ denotes the generalized quantile function of the reference law $\Psi$, a natural deterministic proxy is 
\[
\psi_k=\Psi^{-1}\left(k/(n+1)\right).
\]
This choice is motivated by the fact that order statistics fluctuate around their corresponding quantiles; the estimates below make this approximation quantitative in the regimes considered here. More generally, the proxy vector $\bm\psi$ will be either deterministic, as above, or generated from an independent sample with law $\Psi$. This leads to the approximation
\[
\bm X^\ua
\approx
\sigma\bm\psi.
\]
The estimator is then obtained by choosing the value of $\sigma$ that gives the best fit between the ordered observations $\bm X^\ua$ and the scaled proxy $\sigma\bm\psi$. The weights introduced below allow us to decide which parts of the ordered sample should have more influence in this comparison, and in particular to reduce the effect of the extremes when the signal contamination is expected to be concentrated there.

\subsection{Deterministic and random target proxies}
\label{subsec:proxies}
In the previous discussion, $\bm\psi$ was introduced as a proxy for the unobserved sorted noise vector $\bm Z^\ua$, serving as an ordered reference profile for the rescaled observations. Beyond this role, no specific structure is imposed. This leaves enough flexibility to adapt the proxy to the features of the underlying noise. Deterministic quantiles are natural when the reference law $\Psi$ is explicit and easy to handle, but other choices are also possible: in some regimes it is useful to replace the deterministic profile by a random one, generated independently from a suitable reference distribution. This alternative can be particularly convenient for heavy-tailed or less tractable laws. We distinguish between these two possibilities below.

\subsubsection{Deterministic proxies}
\label{subsec:deterministic-proxies}

The deterministic regime corresponds to choosing the proxy directly from the theoretical quantiles of the reference law $\Psi$. We use the generalized inverse
\[
\Psi^{-1}(u)\coloneqq \inf\{x\in\R:\Psi(x)\ge u\},
\]
for $u\in(0,1)$, and set
\begin{align}\label{eq:proxies_deterministic}	
\bm\psi=\bm\psi^\Det,
\qquad
\psi_k^\Det\coloneqq \Psi^{-1}\Pigl(\frac{k}{n+1}\Pigr),
\end{align}

for $1\le k\le n$. This choice is most useful when these quantiles can be evaluated explicitly or efficiently.

\subsubsection{Random proxies}
\label{subsec:random-proxies}

In some regimes, the deterministic quantile profile is not well suited to the assumptions required by our estimates. This may happen, for instance, when the quantile function of the reference law has poor integrability properties, so that the convergence guarantees for the resulting estimator (to be presented later in the paper) are no longer available. A second difficulty is that the quantile function $\Psi^{-1}$ may be mathematically intractable or expensive to evaluate; this can occur, for example, for stable laws or for multimodal distributions such as mixtures. In these situations, we replace the deterministic quantile profile by an auxiliary sample drawn directly from the reference law.\\

\noindent More precisely, let $\bm\xi=(\xi_1,\dots,\xi_n)$ be an auxiliary i.i.d. sample with common law $\Psi$, generated independently of $\bm X$. We define the random proxy by
\begin{align}\label{eq:proxies_random}
\bm\psi=\bm\psi^\rnd\coloneqq \bm\xi^\ua,	
\end{align}

where $\bm\xi^\ua$ denotes the increasing rearrangement of $\bm\xi$.  Using the usual quantile representation, we may write
\[
\bm\xi=\Psi^{-1}(\bm U),
\qquad
\bm\xi^\ua=\Psi^{-1}(\bm U^\ua),
\]
where $\bm U=(U_1,\dots,U_n)$ is an i.i.d. sample of standard uniform random variables. The advantage of this representation is that the randomness is transferred to the uniform order statistics. In particular, for each $1\le k\le n$, $U_k^\ua$ has distribution $\mathrm{Beta}(k,n-k+1)$ and is concentrated around its mean $k/(n+1)$.\footnote{We follow the standard convention that, for a map $\phi:\R\to\R$ and a vector $\bm x=\{x_k;\, k\in\br n\}\in\R^n$, $\phi(\bm x)$ denotes the vector $\{\phi(x_k);\, k\in\br n\}$. Thus, for instance, $|\bm x|^r=\{|x_k|^r;\, k\in\br n\}$ for $r>0$.}\\

\noindent As will be seen below, the estimators based on $\bm\psi^\rnd$ may contain fluctuation terms with twice the asymptotic variance of their deterministic counterparts. This extra variability can be reduced by averaging over several independent auxiliary samples. Concretely, one may generate $m$ independent copies of $\bm\xi$, compute the corresponding estimator for each copy, and average the resulting values; see Section~\ref{subsec:sim_mechanics}. This amounts to a Monte Carlo approximation of the conditional expectation, given $\bm X$, of the estimator based on $\bm\psi^\rnd$, at the cost of the corresponding increase in computation.

\subsection{Weighted \texorpdfstring{$L^r$}{Lr}-loss function and estimators}
\label{subsec:weighted-Lr-loss}
The proxy $\bm\psi$ gives the ordered reference profile against which $\bm X^\ua$ will be compared. For the results below, this comparison is refined by adding weights: the boundary coordinates are downweighted, since this is where the sorted signal contribution may be largest and where the quantile profile may have poorer regularity. More precisely, we introduce a weight vector
\[
\bm w=(w_1,\dots,w_n)
\]
with $w_i\geq 0$ and use it to define a weighted loss between $\bm X^\ua$ and $s\bm\psi$, for each candidate scale $s$ in $\R$. The decay of $\bm w$ near the boundary is chosen according to the tail behavior of the reference law $\Psi$. Given $r\ge 1$, we define the weighted $L^r$-loss by
\begin{equation}
\label{eq:loss_general}
\ell^r_{\bm w}(s)
\coloneqq
\sum_{k=1}^n w_k |X^\ua_k-s\psi_k|^r,
\end{equation}
for $s$ in $\R$. The map $s\mapsto \ell^r_{\bm w}(s)$ is convex, so the estimator is obtained from a one-dimensional convex minimization problem. We choose
\[
\Sigma_r\in\argmin_{s\in\R}\ell^r_{\bm w}(s),
\]
interpreting $\Sigma_r$ as the unique minimizer whenever uniqueness holds. For $r>1$, uniqueness holds as soon as a non-degenerate condition on $\langle \bm w,|\bm\psi|^r\rangle$ is imposed. In the sequel, we denote by $\bm a\pdot\bm b$ the vector in $\R^n$ with coordinates $a_kb_k$. For the case $r$ equal to two, the loss is a one-dimensional weighted least-squares criterion, so that
\[
\Sigma_2
=
\frac{\langle \bm w,\bm\psi\pdot \bm X^\ua\rangle}
{\langle \bm w,\bm\psi^2\rangle}.
\]
We write $\Sigma_r^\Det$ and $\Sigma_r^\rnd$ for the estimators obtained from $\bm\psi^\Det$ and $\bm\psi^\rnd$, respectively. For notational convenience, we do not make explicit the dependence on the weight vector $\bm w$. \\

\noindent In the case $r=1$, the minimization reduces to a weighted median and can be carried out by the usual Powersort (or any other sorting algorithm such as Timsort) procedure, which can be computed in $\Oh(n\log n)$ computational time with $\Oh(n)$ memory. The accuracy of the resulting estimators depends on the size and sparsity of $\bm Y$, on the reference law $\Psi$, and on the dependence structure of $\bm Z$. The relation between $\bm Y$ and $\bm Z$, however, will play no direct role in the concentration estimates below.

\subsection{Non-asymptotic error bounds for the estimators}\label{se:NAEBFE}
We next present deterministic error bounds for the estimators introduced above, focusing on the cases $r=1,2$. These correspond, respectively, to the weighted-median and weighted least-squares estimators.

\begin{thm}
\label{thm:CI_r12}
Let $p\ge 1$ and let $1\leq q\leq \infty$ be its conjugate exponent. For any proxy $\bm\psi\in\R^n$, any weight $\bm w\in[0,\infty)^n$ and any $r=1,2$ such that $\bm\psi\pdot\bm w\ne\bm0$, we have
\begin{align}
\label{eq:bound_r12}
|\Sigma_r - \sigma| 
&\le 2^{2-r}\frac{\|\bm w\pdot |\bm\psi|^{r-1}\|_q}{\langle\bm w,|\bm\psi|^r\rangle} \|\bm Y\|_p 
+ 2^{2-r} \sigma\bigg(\frac{\langle\bm w,|\bm Z^\ua - \bm\psi|^r\rangle}{\langle\bm w,|\bm\psi|^r\rangle}\bigg)^{1/r}.
\end{align}
\end{thm}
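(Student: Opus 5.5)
The proof rests on one structural fact from the heuristic discussion: the sorting map is $1$-Lipschitz in $\ell^p$. That is, for any $\bm a,\bm b\in\R^n$ and $p\ge1$ we have $\|\bm a^\ua-\bm b^\ua\|_p\le\|\bm a-\bm b\|_p$. I would prove this first, since it is the one genuinely non-trivial input. The standard route goes through the rearrangement inequality: for convex $\phi$, the sum $\sum_k\phi(a_{\pi(k)}-b_k^\ua)$ over permutations $\pi$ is minimized by the sorted pairing. Equivalently, one can swap any inverted pair and check that convexity of $|\cdot|^p$ does not increase the cost. Applying this with $\bm a=\bm X$ and $\bm b=\sigma\bm Z$ (note $(\sigma\bm Z)^\ua=\sigma\bm Z^\ua$ since $\sigma>0$) gives $\bm X^\ua=\sigma\bm Z^\ua+\widetilde{\bm Y}$ with $\|\widetilde{\bm Y}\|_p\le\|\bm Y\|_p$. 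Writing $\bm E\coloneqq\bm Z^\ua-\bm\psi$, we get $\bm X^\ua-s\bm\psi=(\sigma-s)\bm\psi+\sigma\bm E+\widetilde{\bm Y}$.

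\emph{Case $r=2$.} Here the estimator is explicit:
\[
\Sigma_2-\sigma=\frac{\langle\bm w,\bm\psi\pdot\widetilde{\bm Y}\rangle+\sigma\langle\bm w,\bm\psi\pdot\bm E\rangle}{\langle\bm w,\bm\psi^2\rangle}.
\]
The denominator is positive because $\bm\psi\pdot\bm w\ne\bm0$. I bound the first numerator term by Hölder, $|\langle\bm w\pdot\bm\psi,\widetilde{\bm Y}\rangle|\le\|\bm w\pdot|\bm\psi|\|_q\|\bm Y\|_p$, which gives the first term with constant $2^{0}=1$. I bound the second by weighted Cauchy--Schwarz, $|\langle\bm w,\bm\psi\pdot\bm E\rangle|\le\langle\bm w,\bm\psi^2\rangle^{1/2}\langle\bm w,\bm E^2\rangle^{1/2}$, which gives the second term exactly.

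\emph{Case $r=1$.} Here $\Sigma_1$ is any minimizer, so I would argue by comparing loss values. Since $\ell^1_{\bm w}(\Sigma_1)\le\ell^1_{\bm w}(\sigma)$, and using the decomposition above,
\[
\sum_k w_k\big|(\sigma-\Sigma_1)\psi_k+\sigma E_k+\widetilde Y_k\big|\le\sum_k w_k|\sigma E_k+\widetilde Y_k|.
\]
The reverse triangle inequality bounds the left side below by $|\sigma-\Sigma_1|\langle\bm w,|\bm\psi|\rangle-\sum_k w_k|\sigma E_k+\widetilde Y_k|$. Hence
\[
|\Sigma_1-\sigma|\langle\bm w,|\bm\psi|\rangle\le2\sum_kw_k|\sigma E_k+\widetilde Y_k|\le2\sigma\langle\bm w,|\bm E|\rangle+2\|\bm w\|_q\|\bm Y\|_p,
\]
where the last step applies Hölder to $\langle\bm w,|\widetilde{\bm Y}|\rangle$. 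Dividing by $\langle\bm w,|\bm\psi|\rangle>0$ yields the bound with constant $2=2^{2-1}$, matching $\|\bm w\pdot|\bm\psi|^0\|_q=\|\bm w\|_q$. The positivity of the denominator again follows from $\bm\psi\pdot\bm w\ne\bm0$.

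The main obstacle is therefore the $\ell^p$-contraction of sorting. This should be handled cleanly, including the case $p=\infty$ if it is allowed, via the elementary fact that for $a\le a'$ and $b\le b'$ we have $|a-b|^p+|a'-b'|^p\le|a-b'|^p+|a'-b|^p$. Iterating this over the inversions of an optimal permutation (bubble sort) gives the claim. Once the contraction is in hand, the rest is Hölder, Cauchy--Schwarz and a minimizer comparison.
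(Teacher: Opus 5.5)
Your proposal is correct and follows essentially the same route as the paper. In both, the $r=1$ case is a minimizer comparison plus the triangle inequality, and the $r=2$ case uses the explicit formula with Hölder on the signal term and Cauchy--Schwarz on the proxy term, both hinging on the $\ell^p$-contraction of sorting. The only difference is that you prove that contraction yourself via the convex exchange (bubble-sort) argument, whereas the paper simply cites it from \cite{MR4028181}.
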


\noindent The bound \eqref{eq:bound_r12} is deterministic in the sense that it does not rely on any distributional assumption and holds once the vectors $\bm Y$, $\bm Z^\ua$, $\bm\psi$ and $\bm w$ are fixed. Consequently, probabilistic estimates for $\Sigma_r$ reduce to controlling two deterministic quantities: the signal contribution from $\bm Y$ and the proxy error between $\bm Z^\ua$ and $\bm\psi$.\\

\noindent The proof of Theorem~\ref{thm:CI_r12} requires the use of a basic stability property of increasing rearrangements. This result    can be found for instance in \cite[Lem.~4.2]{MR4028181} and  states that sorting two vectors in the same order cannot increase their $L^p(\Omega)$-distance. More precisely, for any $p\ge 1$ and any $\bm x,\bm y\in\R^n$,
\begin{equation}
\label{eq:non-exp-Lp}
\|\bm{x}^\ua-\bm{y}^\ua\|_p
\le \|\bm{x}-\bm{y}\|_p.
\end{equation}

\begin{proof}[Proof of Theorem~\ref{thm:CI_r12}]
The proof is split into the two cases $r=1$ and $r=2$.\\

\noindent We first handle the case $r=1$. By definition of $\Sigma_1$, note that
\[
\langle \bm{w}, |\bm{X}^\ua - \Sigma_1 \bm\psi|\rangle
\le \langle \bm{w}, |\bm{X}^\ua - \sigma \bm\psi|\rangle.
\]
Thus, the triangle inequality yields
\begin{align*}
|\Sigma_1-\sigma|
\langle \bm{w}, |\bm\psi|\rangle
&\le
\langle \bm{w},|\bm{X}^\ua - \Sigma_1 \bm\psi|\rangle
+\langle \bm{w}, |\bm{X}^\ua - \sigma \bm\psi|\rangle
\le 2\langle \bm{w}, |\bm{X}^\ua - \sigma \bm\psi|\rangle.	
\end{align*}

\noindent Thus, the triangle inequality gives
\begin{equation*}
|\Sigma_1-\sigma|
\langle \bm{w}, |\bm\psi|\rangle
\le 2 \langle\bm{w},|\bm{X}^\ua - \sigma \bm{Z}^\ua|\rangle + 2 \sigma\langle\bm{w},|\bm{Z}^\ua - \bm\psi|\rangle.
\end{equation*}
To bound the first term, we simply use H\"older's inequality and~\eqref{eq:non-exp-Lp}:
\[
\langle\bm{w},|\bm{X}^\ua - \sigma \bm{Z}^\ua|\rangle
\le \|\bm{w}\|_q\|\bm{X}^\ua - \sigma \bm{Z}^\ua\|_p
\le \|\bm{w}\|_q\|\bm{X} - \sigma \bm{Z}\|_p
=\|\bm{w}\|_q\|\bm{Y}\|_p.
\]

\noindent We now handle the case $r=2$ following a similar strategy. Note that
\begin{equation*}
(\Sigma_2 - \sigma) \langle\bm{w},\bm\psi^{2}\rangle 
= \langle\bm{w},\bm\psi\pdot\bm{X}^\ua\rangle - \sigma \langle\bm{w},\bm\psi^{2}\rangle 
= \langle\bm{w},\bm\psi\pdot(\bm{X}^\ua - \sigma \bm\psi)\rangle
= \langle\bm{w}\pdot\bm\psi,\bm{X}^\ua - \sigma \bm\psi\rangle.
\end{equation*}
Next, we add and subtract $\sigma \bm{Z}^\ua$ inside the parentheses, take absolute value and apply the triangle inequality to get
\begin{equation}
\label{eq:proof_r2_decoupled}
|\Sigma_2 - \sigma| \langle\bm{w},\bm\psi^{2}\rangle 
\le | \langle\bm{w}\pdot\bm\psi,\bm{X}^\ua - \sigma \bm{Z}^\ua\rangle| + \sigma |\langle\bm{w}\pdot\bm\psi,\bm{Z}^\ua - \bm\psi\rangle|. 
\end{equation}

\noindent We control the first term in \eqref{eq:proof_r2_decoupled} by applying H\"older's inequality together with \eqref{eq:non-exp-Lp}:
\[
| \langle\bm{w}\pdot\bm\psi,\bm{X}^\ua - \sigma \bm{Z}^\ua\rangle| 
\le \|\bm{w} \pdot \bm\psi\|_q \|\bm{X}^\ua - \sigma \bm{Z}^\ua\|_p
\le \|\bm{w} \pdot \bm\psi\|_q \|\bm{X} - \sigma \bm{Z}\|_p
=\|\bm{w} \pdot \bm\psi\|_q \|\bm{Y}\|_p.
\]
For the second term in~\eqref{eq:proof_r2_decoupled}, we apply the Cauchy-Schwarz inequality: 
\[
\big|\big\langle\sqrt{\bm{w}}\pdot\bm\psi,\sqrt{\bm{w}}\pdot(\bm{Z}^\ua - \bm\psi)\big\rangle\big|^2 
\le \langle\bm{w},\bm\psi^2\rangle \langle\bm{w},(\bm{Z}^\ua - \bm\psi)^2\rangle.
\]
Substituting both bounds into \eqref{eq:proof_r2_decoupled} and dividing by $\langle\bm{w},\bm\psi^2\rangle$ gives the bound for $\Sigma_2$.
\end{proof}

\begin{rem}
\label{rem:CI-r-not-12}
While similar concentration inequalities can be developed for the cases $r \ne 1,2$, these estimators will not be robust, unlike $r=1$, nor explicit, unlike $r=2$. Since our goal here is to develop the basic non-asymptotic theory in the two most relevant cases, we restrict the analysis to the weighted-median and weighted least-squares estimators. Extensions to other values of $r$ are possible, but they require a separate treatment and will not be addressed  in this work.
\end{rem}

\noindent Theorem~\ref{thm:CI_r12} decomposes the estimation error $|\Sigma_r-\sigma|$ into two terms: one coming from the signal component $\bm Y$, and one from the mismatch between the sorted noise $\bm Z^\ua$ and the proxy $\bm\psi$. Before continuing further, we would like to emphasize the different roles played by these two contributions.\\

\begin{itemize}[leftmargin=2em, topsep=0em]
\item[(I)] The first term, hereafter called ``signal contamination'', is governed by the contamination magnitude $\|\bm{Y}\|_p$ and represents the full contamination by the sparse signal. The magnitude of this contamination is primarily suppressed by the weighted mass $\langle\bm{w},|\bm\psi|^r\rangle$ of the proxies in the denominator. If the weights $\bm{w}$ are chosen to sufficiently penalize the boundaries, the full contamination term will vanish polynomially as $n\to\infty$. Corollary~\ref{cor:CI_r12} below presents a simplified version of Theorem~\ref{thm:CI_r12} with explicit asymptotic coefficients in the case where $\bm{w}$ is a continuous bounded function $\omega(\bm\psi)$ of the proxy~$\bm\psi$, for appropriate functions $\omega:\R\to(0,\infty)$ described in Sections~\ref{sec:weighted-proxies} and~\ref{sec:proxy_control} below.\\

\item[(II)] The second term, hereafter called ``empirical-to-proxy discrepancy'', is governed by the weighted disagreement between $\bm{Z}^\ua$ and the proxy~$\bm\psi$ and is harder to control in full generality. The main objective of Section~\ref{sec:proxy_control} is to establish rigorous probabilistic control over this discrepancy under both, the deterministic and random proxy regimes, respectively, accommodating for complex dependence structures and heavy-tailed reference distributions $\Psi$ in both cases.\\
\end{itemize}

\noindent We now use the previous bounds to derive three asymptotic results for $\Sigma_r$. In the sequel, we will write $\zeta_n=\Oh_\p(\beta_n)$, for a positive deterministic sequence $ \beta_n $, if $\zeta_n/\beta_n$ is bounded in probability. Similarly, we write $\zeta_n=\oh_\p(\beta_n)$ if $\zeta_n/\beta_n$ converges to zero in probability  as $n$ tends to infinity.

\begin{cor}
\label{cor:CI_r12}
Fix $p\geq 1$, let $1\leq q\leq \infty$ be its conjugate exponent, and let $r=1,2$. Set $\bm{w}=\omega(\bm\psi)$, for some $\omega:\R\to\R_{+}$ continuous and bounded with
\[
\sup_{x\in\R}|x|^r\omega(x)<\infty
\qquad\text{and}\qquad
\E[|Z|^r\omega(Z)]>0,
\]
where $Z$ has distribution function $\Psi$.
If the proxy~$\bm\psi$ is constructed via either~\eqref{eq:proxies_deterministic} or~\eqref{eq:proxies_random}, then, 
\begin{align*}
|\Sigma_r - \sigma| 
&\le
4\frac{\|\bm Y\|_p}{n^{1/p}}
\bigg(
\frac{\|Z^{r-1}\omega(Z)\|_{L^q}}
{\|Z^r\omega(Z)\|_{L^1}}
+\oh_\p(1)
\bigg)\\
&+
4\frac{
\langle\bm w,|\bm Z^\ua-\bm\psi|^r\rangle^{1/r}
}{n^{1/r}}
\bigg(
\frac{\sigma}
{\|Z\omega(Z)^{1/r}\|_{L^r}}
+\oh_\p(1)
\bigg).
\end{align*}
\end{cor}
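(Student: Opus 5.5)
The plan is to start from the deterministic bound \eqref{eq:bound_r12} of Theorem~\ref{thm:CI_r12}, use $2^{2-r}\le 2\le 4$, and rewrite each coefficient as an empirical average over the proxy. Since $\bm w=\omega(\bm\psi)$,
\[
\frac{\|\bm w\pdot|\bm\psi|^{r-1}\|_q}{\langle \bm w,|\bm\psi|^r\rangle}
=\frac{1}{n^{1/p}}\cdot
\frac{\big(\tfrac1n\sum_k |\psi_k|^{(r-1)q}\omega(\psi_k)^q\big)^{1/q}}{\tfrac1n\sum_k|\psi_k|^r\omega(\psi_k)},
\]
using $1-1/q=1/p$, with the usual reading as a maximum when $q=\infty$. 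Similarly,
\[
\Big(\frac{\langle\bm w,|\bm Z^\ua-\bm\psi|^r\rangle}{\langle\bm w,|\bm\psi|^r\rangle}\Big)^{1/r}
=\frac{\langle\bm w,|\bm Z^\ua-\bm\psi|^r\rangle^{1/r}}{n^{1/r}}\cdot\Big(\tfrac1n\sum_k|\psi_k|^r\omega(\psi_k)\Big)^{-1/r}.
\]
The corollary therefore reduces to two laws of large numbers. The first is $\tfrac1n\sum_k g(\psi_k)\to\E[g(Z)]$ for $g(x)=|x|^r\omega(x)$ and for $g(x)=|x|^{(r-1)q}\omega(x)^q$ when $q<\infty$. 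The second is the convergence of the maximum, $\max_k|\psi_k|^{r-1}\omega(\psi_k)\to\sup$, with the target $L^\infty$ norm, when $q=\infty$. Once these hold, the continuous mapping theorem gives each ratio as its limit plus $\oh_\p(1)$, because the limiting denominators $\E[|Z|^r\omega(Z)]$ and $\|Z\omega(Z)^{1/r}\|_{L^r}^r$ are positive. This yields exactly the stated constants.

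For the laws of large numbers, both proxies are written through uniforms, $\psi_k=\Psi^{-1}(u_k)$. In the deterministic case $u_k=k/(n+1)$, and in the random case $u_k=U_k^\ua$. Each test function $g\circ\Psi^{-1}$ is bounded on $(0,1)$:
\begin{itemize}
\item $|x|^r\omega$ is bounded by hypothesis.
\item $|x|^{(r-1)q}\omega^q$ equals $\omega^q$ when $r=1$, which is bounded.
\item When $r=2$, it is bounded by $(\sup|x|^2\omega)^{q/2}\,\omega^{q/2}$. This uses $|x|\omega=(|x|^2\omega)^{1/2}\omega^{1/2}$.
\end{itemize}
Moreover, $g\circ\Psi^{-1}$ is continuous except on the at most countable set of jump points of $\Psi^{-1}$, which is Lebesgue-null. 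In the deterministic case, $\tfrac1n\sum_k h(k/(n+1))$ is then a Riemann sum of a bounded, a.e.\ continuous function, so it converges to $\int_0^1 h=\E[g(Z)]$. In the random case, $\tfrac1n\sum_k g(\psi_k)=\tfrac1n\sum_k g(\xi_k)$, because the sum is permutation invariant. The strong law of large numbers then gives almost-sure convergence directly, which is simpler than working through $\bm U^\ua$.

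The step I expect to be the main obstacle is the case $q=\infty$ (i.e.\ $p=1$). There the numerator is a maximum over the proxy rather than an average. I would handle it by noting that for any $\varepsilon>0$ the set where $h>\sup h-\varepsilon$ has interior of positive measure, provided one interprets $\|\cdot\|_{L^\infty}$ as an essential supremum attained near continuity points. With deterministic grid points or $n$ i.i.d.\ draws, the set is hit eventually, with probability tending to one, which gives the lower bound. The upper bound, $\max\le\sup$, is trivial.

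A secondary subtlety is sign conventions. The statement writes $Z^{r-1}$ and $Z\omega^{1/r}$ without absolute values, and I would read all norms as norms of absolute values. The factor $4$ (rather than $2$) also leaves slack, so it suffices to show each coefficient is at most twice its limit with probability tending to one if one prefers to avoid exact limits.
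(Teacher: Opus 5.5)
Your proof is correct, and its skeleton is the paper's: apply Theorem~\ref{thm:CI_r12}, bound $2^{2-r}\le 4$, and show that the $n$-normalized coefficients converge to the stated constants. Positivity of $\E[|Z|^r\omega(Z)]$ makes the ratios continuous at the limit, and also gives $\bm\psi\pdot\bm w\neq\bm 0$ with probability tending to one, so the theorem applies.

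The difference is in how the limits are obtained. The paper goes through Corollary~\ref{cor:weighted-proxies}:
\begin{itemize}
\item for $\bm\psi^\Det$ it uses Lemma~\ref{lem:Riemann-sums}, which handles Riemann sums under regularly varying integrable envelopes;
\item for $\bm\psi^\rnd$ it uses Lemma~\ref{lem:Poisson-sums}, based on the $\Beta(k,n-k+1)$ marginals of $\bm U^\ua$, together with Scheff\'e's lemma;
\item the case $q=\infty$ is handled in one sentence.
\end{itemize}

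You instead note that, under the corollary's hypotheses, every integrand $g\circ\Psi^{-1}$ is bounded and continuous off a countable set. In the deterministic case plain Riemann integrability then suffices; this is just the bounded special case of Lemma~\ref{lem:Riemann-sums}. In the random case, the identity $\sum_k g(\psi^\rnd_k)=\sum_k g(\xi_k)$ reduces everything to an i.i.d.\ average. That identity is the pathwise version of $\sum_{j=1}^n\binom{n-1}{j-1}x^{j-1}(1-x)^{n-j}=1$, which is all Lemma~\ref{lem:Poisson-sums} uses. The lemma, however, only gives convergence of means. Your law of large numbers directly supplies the convergence in probability that the paper's appeal to Scheff\'e leaves implicit. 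If the auxiliary sample is redrawn for each $n$, use Chebyshev's inequality rather than the strong law; $\oh_\p(1)$ needs nothing more.

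What the paper's lemmas buy is generality: they allow unbounded integrands and $n$-dependent integrands $f_n$. That is needed in Theorems~\ref{thm:weak_lim_proxy_det} and~\ref{thm:weak_lim_proxy_rand}, but not for this corollary.

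For $q=\infty$ you can drop the hedge about how to interpret the $L^\infty$ norm. Set $G(x)=|x|^{r-1}\omega(x)$, which is continuous. Since $\Psi^{-1}$ is left-continuous, $h=G\circ\Psi^{-1}$ is left-continuous on $(0,1)$. Hence any nonempty superlevel set $\{h>c\}$ contains an interval of positive length, and $\sup_{(0,1)}h=\operatorname{ess\,sup}h=\|Z^{r-1}\omega(Z)\|_{L^\infty}$. This makes your bound $\max_k h(u_k)\le\sup h$ the correct upper bound. For the lower bound, take an interval of length $\delta>0$ inside $\{h>\sup h-\varepsilon\}$. The grid $k/(n+1)$ eventually meets it, and $n$ i.i.d.\ uniforms all miss it only with probability $(1-\delta)^n\to 0$.
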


\begin{rem}
Besides requiring the coordinates of the noise vector $\bm Z$ to have marginal distribution $\Psi$, no further condition is imposed on the dependence structure among them.
\end{rem}

\begin{rem}
As a consequence of Corollary~\ref{cor:CI_r12}, the estimator $\Sigma_r$ is consistent provided that
\[
\|\bm{Y}\|_p^p=\oh_\p(n)
\qquad\text{and}\qquad
\langle\bm{w},|\bm{Z}^\ua-\bm\psi|^r\rangle=\oh_\p(n).
\]
Moreover, if
\[
\|\bm{Y}\|_p^p=\Oh_{\p}(n^\delta)
\qquad\text{and}\qquad
\langle\bm{w},|\bm{Z}^\ua-\bm\psi|^r\rangle=\Oh_{\p}(n^\gamma),
\]
then
\[
|\Sigma_r-\sigma|
=
\Oh_\p\bigl(n^{-\min\{(1-\gamma)/r,(1-\delta)/p\}}\bigr).
\]
\end{rem}

\begin{proof}[Proof of Corollary \ref{cor:CI_r12}]
The result is a consequence of Theorem~\ref{thm:CI_r12} and the forthcoming Corollary~\ref{cor:weighted-proxies}. When $p=1$ and hence $q=\infty$, we additionally use
\[
\|\bm w\pdot|\bm\psi|^{r-1}\|_\infty
\longrightarrow
\|Z^{r-1}\omega(Z)\|_{L^\infty},
\]
deterministically for $\bm\psi^\Det$ and in probability for $\bm\psi^\rnd$,
which follows directly from the deterministic quantile approximation and
from the convergence of sample maxima, respectively.
\end{proof}

\noindent We next specialize the discussion to the case where the coordinates of $\bm Z$ are independent and identically distributed. In this setting, the discrepancy terms appearing in the above corollary can be controlled explicitly. More precisely,  Corollary~\ref{cor:CI_r12} together with Theorems~\ref{thm:weak_lim_proxy_det} and~\ref{thm:weak_lim_proxy_rand} below yields the following consequence.

\begin{cor}
\label{cor:sigma-iid}
Assume the hypotheses of Corollary~\ref{cor:CI_r12}. Suppose moreover that $\bm Z$ has i.i.d. coordinates and that the corresponding assumptions of Theorem~\ref{thm:weak_lim_proxy_det} or Theorem~\ref{thm:weak_lim_proxy_rand} hold, according to the chosen proxy. Then,
\[
|\Sigma_r - \sigma|
=
\Oh_{\p}\big( n^{-1/p} \|\bm{Y}\|_p + n^{-1/2}\big),
\]
as $n$ tends to infinity.
\end{cor}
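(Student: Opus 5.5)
The plan is to start from Corollary~\ref{cor:CI_r12} and show that each of its two terms has the required order. That corollary gives
\[
|\Sigma_r-\sigma|\le 4\frac{\|\bm Y\|_p}{n^{1/p}}\big(C_1+\oh_\p(1)\big)+4\frac{\langle\bm w,|\bm Z^\ua-\bm\psi|^r\rangle^{1/r}}{n^{1/r}}\big(C_2+\oh_\p(1)\big),
\]
with finite constants
\[
C_1=\|Z^{r-1}\omega(Z)\|_{L^q}\big/\|Z^r\omega(Z)\|_{L^1},\qquad C_2=\sigma\big/\|Z\omega(Z)^{1/r}\|_{L^r}.
\]
Both constants are finite: $\omega$ is bounded, $\sup_x|x|^r\omega(x)<\infty$ (which also bounds $|x|^{r-1}\omega(x)$), and $\E[|Z|^r\omega(Z)]>0$. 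Since a quantity of the form $C+\oh_\p(1)$ is $\Oh_\p(1)$, the first term is already $\Oh_\p(n^{-1/p}\|\bm Y\|_p)$.

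The second term is the real work. I would show that
\[
\langle\bm w,|\bm Z^\ua-\bm\psi|^r\rangle=\Oh_\p(n^{1-r/2}).
\]
Then this term is $\Oh_\p(n^{(1-r/2)/r-1/r})=\Oh_\p(n^{-1/2})$ for both $r=1$ and $r=2$.

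This is exactly what Theorems~\ref{thm:weak_lim_proxy_det} and~\ref{thm:weak_lim_proxy_rand} should provide in the i.i.d.\ case, under their respective assumptions.
\begin{itemize}
\item For $r=2$, they should say that $\langle\bm w,(\bm Z^\ua-\bm\psi)^2\rangle$ is tight. This is the quantile-process statement: $\sqrt n(Z^\ua_k-\psi_k)$ behaves like a weighted Brownian bridge $B(u)/\Psi'(\Psi^{-1}(u))$, so the weighted squared sum converges in law to a finite weighted integral of the squared bridge. For the random proxy there are two independent such bridges, which explains the doubled variance mentioned earlier.
\item For $r=1$, the same heuristic gives $\langle\bm w,|\bm Z^\ua-\bm\psi|\rangle=\Oh_\p(\sqrt n)$. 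Alternatively, it follows from the $r=2$ statement by Cauchy--Schwarz:
\[
\langle\bm w,|\bm Z^\ua-\bm\psi|\rangle\le\langle\bm w,\bm 1\rangle^{1/2}\langle\bm w,(\bm Z^\ua-\bm\psi)^2\rangle^{1/2}\le(n\|\omega\|_\infty)^{1/2}\,\Oh_\p(1),
\]
provided the weak-limit theorems cover the quadratic weighted discrepancy under the stated assumptions.
\end{itemize}

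Combining the two terms with $\Oh_\p$ arithmetic (sums and products of tight sequences are tight) gives the claimed rate.

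I expect the main obstacle to be matching the exact form of the conclusions of Theorems~\ref{thm:weak_lim_proxy_det} and~\ref{thm:weak_lim_proxy_rand}, which are stated later in the paper. Specifically, I need to confirm that they deliver tightness of $n^{r/2-1}\langle\bm w,|\bm Z^\ua-\bm\psi|^r\rangle$ for the relevant $r$, including near the boundary indices, where the weights $\omega(\psi_k)$ must compensate for the blow-up of $1/\Psi'(\Psi^{-1}(u))$. If those theorems only give the quadratic case, I would fall back on the Cauchy--Schwarz reduction above for $r=1$.
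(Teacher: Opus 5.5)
Your proposal is correct and follows the paper's own route. The paper combines Corollary~\ref{cor:CI_r12} with the weak limits in Theorems~\ref{thm:weak_lim_proxy_det} and~\ref{thm:weak_lim_proxy_rand}, whose limits are a.s.\ finite, to get tightness of $n^{r/2-1}\langle\bm w,|\bm Z^\ua-\bm\psi|^r\rangle$ and hence the $n^{-1/2}$ rate for the discrepancy term. Those theorems are stated for every $r\ge1$ under $\mathbf{H}_{\kappa,r}$, so the case $r=1$ is covered directly. Your Cauchy--Schwarz fallback is therefore unnecessary, and it would in fact require the stronger $r=2$ hypotheses.
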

\begin{rem}
As a consequence of Corollary \ref{cor:sigma-iid},  we have that $\Sigma_r$ is consistent whenever $\|\bm{Y}\|_p^p = \oh_\p(n)$. Moreover, if $\|\bm{Y}\|_p^p =\Oh_\p(n^\delta)$, then 
$$|\Sigma_r - \sigma| = \Oh_{\p}(n^{-\min\{(1-\delta)/p, 1/2\}}).$$	
\end{rem}

\noindent The setting in which $\bm Z$ has independent coordinates provides the cleanest illustration of how Corollary~\ref{cor:CI_r12} can be used. Nevertheless, the same approach applies whenever the discrepancy between the ordered noise and the proxy can be controlled. In particular, in  a dependent setting, this control is obtained from bounds on the correlations of indicator functions of half-lines. The general case is treated in the forthcoming Proposition~\ref{prop:expected_dependent_discrepancy} below, while we now present the simplified result for stationary Gaussian processes.\\

\noindent Before going further, we make explicit the form of dependence that we allow for the vector $\bm Z$. We start from a centered, unit-variance Gaussian process $\bm W$, whose dependence is encoded by its covariance function. Applying the standard Gaussian distribution function $\Phi$ coordinatewise yields a dependent sequence with uniform marginals. The  noise distribution to be considered in the sequel is then assumed to be obtained by applying the quantile function $\Psi^{-1}$ to the aforementioned procedure, where $\Psi$ denotes the distribution function of the noise. Namely, we assume that
\begin{align}\label{eq:Zndef}
\bm{Z}=\Psi^{-1}\circ\Phi(\bm{W}).
\end{align}
Thus, the dependence of $\bm Z$ is inherited from the Gaussian process $\bm W$, and the quantity
\begin{align}\label{eq:Dndef}
D_n=\sum_{i=1}^n\sum_{j=1}^n |\Cov[W_i,W_j]|
\end{align}
provides a measurement of the dependence entering the bound. The previous construction essentially means that the system follows a Gaussian copula with arbitrary marginal law $\Psi$. The adjustment of Corollary \ref{cor:sigma-iid} to the dependent case then reads as follows

\begin{cor}
\label{cor:sigma-dependent-noise}
Let $p\geq 1$ and $r= 1, 2$. Assume $\bm{Z}$ is given by \eqref{eq:Zndef} and that
\[
\int_{\R}
|x|^{r-1}
\sqrt{\Psi(x)(1-\Psi(x))}\,\dd x
<\infty.
\]
Suppose $\bm\psi$ is generated via either~\eqref{eq:proxies_deterministic} or~\eqref{eq:proxies_random}. Let $\bm{w} = \omega(\bm\psi)$ where $\omega:\R\to [0,1]$ is a continuous bounded function such that 
$$\sup_{x\in\R}\omega(x)|x|^r<\infty.$$
Assume moreover that
\[
\E[|Z|^r\omega(Z)]>0.
\]
Then, we have
\[
|\Sigma_r - \sigma| = \Oh_{\p}\big( n^{-1/p} \|\bm{Y}\|_p + n^{-1/r}D_n^{1/(2r)}\big),
\]
as $n$ tends to infinity, where $D_n$ is given by \eqref{eq:Dndef}. 
\end{cor}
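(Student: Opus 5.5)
We plan to derive Corollary~\ref{cor:sigma-dependent-noise} from Corollary~\ref{cor:CI_r12}, which already reduces the problem to controlling two quantities. The hypotheses on $\omega$ (continuous, bounded, $\sup_x\omega(x)|x|^r<\infty$, $\E[|Z|^r\omega(Z)]>0$) are exactly those of Corollary~\ref{cor:CI_r12}. The Gaussian copula construction \eqref{eq:Zndef} ensures that each $Z_k$ has marginal law $\Psi$, and no further dependence condition is needed there. Consequently the first term yields $\Oh_\p(n^{-1/p}\|\bm Y\|_p)$ directly, since its coefficient $\|Z^{r-1}\omega(Z)\|_{L^q}/\|Z^r\omega(Z)\|_{L^1}+\oh_\p(1)$ is $\Oh_\p(1)$. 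Indeed, $|x|^{r-1}\omega(x)$ is bounded because $\omega$ and $|x|^r\omega(x)$ are, so the $L^q$ norm is finite. It therefore suffices to show
\[
\langle\bm w,|\bm Z^\ua-\bm\psi|^r\rangle=\Oh_\p\bigl(D_n^{1/2}\bigr),
\]
because then $n^{-1/r}\langle\bm w,|\bm Z^\ua-\bm\psi|^r\rangle^{1/r}=\Oh_\p(n^{-1/r}D_n^{1/(2r)})$.

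For this discrepancy bound we would invoke Proposition~\ref{prop:expected_dependent_discrepancy}, which controls the expected weighted discrepancy through covariances of half-line indicators. We sketch the mechanism to check that its hypotheses reduce to ours. Since $0\le w_k\le 1$, it is enough to bound $\E\langle \bm 1,|\bm Z^\ua-\bm\psi|^r\rangle$.

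For $r=1$ we use the quantile coupling identity $\sum_k|Z_k^\ua-\psi_k|=\int_\R|\#\{k:Z_k\le x\}-\#\{k:\psi_k\le x\}|\,\dd x$. Writing $F_n$ for the empirical count, we split
\[
\#\{Z_k\le x\}-\#\{\psi_k\le x\}=\bigl(F_n(x)-n\Psi(x)\bigr)+\bigl(n\Psi(x)-\#\{\psi_k\le x\}\bigr).
\]
The first piece has second moment $\sum_{i,j}\Cov[\1\{Z_i\le x\},\1\{Z_j\le x\}]$. We then need the Gaussian covariance inequality for indicators of half-lines of $\bm W$ (via Mehler's expansion or the Gebelein bound), namely
\[
\bigl|\Cov[\1\{W_i\le t\},\1\{W_j\le t\}]\bigr|\le |\Cov[W_i,W_j]|\,\Phi(t)(1-\Phi(t)).
\]
Taking $t=\Phi^{-1}(\Psi(x))$, this gives $\E|F_n(x)-n\Psi(x)|\le \sqrt{D_n\Psi(x)(1-\Psi(x))}$. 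Integrating in $x$ produces the factor $\int\sqrt{\Psi(1-\Psi)}$, which is finite by the assumed integrability condition. For the proxy piece, in the deterministic case the difference is at most $1$ pointwise and is supported where it is controlled by the tails of $\Psi$. In the random case it is an i.i.d.\ empirical process term of order $\sqrt{n\Psi(1-\Psi)}$. Since $D_n\ge n$ (the diagonal contributes $n$), both are $\Oh(\sqrt{D_n})$.

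For $r=2$ the analogous representation carries the weight $|x|$: one bounds $\sum_k|Z_k^\ua-\psi_k|^2$ by $2\int|x|\,|\#\{Z_k\le x\}-\#\{\psi_k\le x\}|\,\dd x$, up to the usual layer-cake argument, which explains the factor $|x|^{r-1}$ in the hypothesis. Markov's inequality then turns the expectation bound into $\Oh_\p(D_n^{1/2})$.

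The main obstacle is the $r=2$ case, specifically justifying the weighted layer-cake inequality $\sum_k|a_k^\ua-b_k^\ua|^2\lesssim\int|x|\,|N_a(x)-N_b(x)|\,\dd x$ and the handling of the deterministic proxy's boundary coordinates. Here we would rely on the boundedness of $\omega(x)|x|^r$, which allows the extreme indices to be treated without any additional moment conditions. If Proposition~\ref{prop:expected_dependent_discrepancy} is stated with exactly this integral condition, these steps are inherited and the corollary follows by assembling the bounds.
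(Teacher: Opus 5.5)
Your proposal is correct and follows essentially the paper's route: Corollary~\ref{cor:CI_r12} handles the signal term, Proposition~\ref{prop:expected_dependent_discrepancy} (with Gebelein's inequality transferred through the Gaussian copula, as in Corollary~\ref{cor:expected_dependent_discrepancy} and Remark~\ref{rem:expected_dependent_discrepancy}(a)) bounds the discrepancy, and $D_n\ge n$ plus Markov's inequality finish the argument. Two minor slips: the $r=2$ layer-cake constant is $4$, as in \eqref{eq:W2}, not $2$ (take $n=1$, $Z=-1$, $\psi=1$); and the boundary coordinates of $\bm\psi^\Det$ are handled by $|\iota[\bm\psi](x)-\Psi(x)|\le\sqrt{\Psi(x)(1-\Psi(x))/n}$ and $\omega\le 1$, not by $\sup_x\omega(x)|x|^r<\infty$, which is only needed for Corollary~\ref{cor:CI_r12}.
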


\begin{rem}
As a consequence of Corollary \ref{cor:sigma-dependent-noise},  we have that $\Sigma_r$ is consistent if $\|\bm{Y}\|_p^p = \oh_\p(n)$ and $D_n=\oh(n^2)$. Moreover, if  $\|\bm{Y}\|_p^p =\Oh_\p(n^\delta)$ and $D_n=\Oh(n^\gamma)$, then 
$$|\Sigma_r - \sigma| = \Oh_{\p}(n^{-\min\{(1-\delta)/p,(1-\gamma/2)/r\}}).$$	
\end{rem}

\begin{proof}
The result follows from Corollaries~\ref{cor:CI_r12} and~\ref{cor:expected_dependent_discrepancy} and Remark~\ref{rem:expected_dependent_discrepancy}(a) below.
\end{proof}

\begin{rem}
\label{rem:fBm}
\noindent Corollary~\ref{cor:sigma-dependent-noise} reduces the control of the empirical-to-proxy discrepancy to the growth of the correlation sum associated with the underlying Gaussian process. As a concrete benchmark, we consider the case where the components of $\bm Z$ are given by the standardized increments of a fractional Brownian motion. Concretely, for a given fractional Brownian motion $B^{H}$ with $0<H<1$, we let 
$$Z_i:=B_{i}^H-B_{i-1}^H.$$ 
for $i\leq n$. The associated covariance function satisfies
\[
\rho_H(k)
\coloneqq
\frac{(k+1)^{2H}-2k^{2H}+(k-1)^{2H}}{2}.
\]
For $H\neq\tfrac12$,
\[
\rho_H(k)
\sim H(2H-1)k^{2H-2},
\]
as $k$ tends to infinity, while for $H=\tfrac12$ we have $\rho_H(k)=0$ for every $k\geq1$. Consequently,
\[
D_n
=
n+\sum_{k=1}^{n-1}2(n-k)|\rho_H(k)|
=
\Oh\big(n^{\max\{1,2H\}}\big).
\]
Thus, under the assumptions of Corollary~\ref{cor:sigma-dependent-noise}, we have
\[
|\Sigma_r-\sigma|
=
\Oh_{\p}\big(
n^{-1/p}\|\bm Y\|_p
+
n^{-\min\{1-H,1/2\}/r}
\big).
\]
\end{rem}

\noindent We end this section by taking a closer look at the weighted median estimator $\Sigma_1$, with the aim of making explicit the robustness properties that are specific to the case $r$ equal to one in our analysis.

\subsection{Robustness and Breakdown Point Analysis for \texorpdfstring{$r=1$}{the weighted median}}

\noindent In order to make the robustness properties of $\Sigma_1$ precise, we consider situations where a subset of the observed data may be arbitrarily corrupted, either due to measurement errors \cite{fuller1987measurement}, outliers \cite{hampel1986robust}, or adversarial perturbations \cite{MR161415}. Even a small fraction of such contamination can significantly distort classical estimators, potentially driving them to arbitrarily large values, so that they no longer provide reliable information about the underlying parameter. This raises the fundamental question of how much contamination an estimator can tolerate before it completely loses reliability.\\

\noindent A natural way to formalize this notion is through the concept of the breakdown point, which quantifies the largest proportion of arbitrary corruption that a dataset can sustain while keeping the estimator bounded, as defined below. Consider a sample $\bm{\theta} = \{\theta_k;\, k\in\br{n}\}$, and let $T_n:\R^n\to\R$ be a given statistic. For $m\le n$, define
\[
\Theta_m(\bm\theta)
\coloneqq
\big\{
\tilde{\bm\theta}\in\R^n:
\bigl|\{k\in\br{n}:\tilde{\theta}_k\neq\theta_k\}\bigr|\le m
\big\}.
\]
That is, $\Theta_m(\bm\theta)$ is the collection of samples obtained by replacing at most $m$ entries of $\bm\theta$ with arbitrary real values. To determine whether this level of contamination breaks the statistic, we consider the largest value that $|T_n|$ can attain over $\Theta_m(\bm\theta)$. This leads to the following definition.

\begin{definition}
The finite-sample breakdown point of the statistic $T_n$ is defined as
\[
\ve(T_n)
\coloneqq
\frac{1}{n}
\inf_{\bm\theta\in\R^n}\ve(T_n,\bm\theta),
\quad\text{where}\quad
\ve(T_n,\bm\theta)
\coloneqq\max\Pigl\{
m\in\{0,\ldots,n\} : \sup_{\tilde{\bm\theta}\in\Theta_m(\bm\theta)} |T_n(\tilde{\bm\theta})| < \infty
\Pigr\}.
\]
\end{definition}
\noindent When $T_n$ is taken to be the usual median of a real-valued sample, its breakdown point is approximately one half, meaning that it remains stable under arbitrary contamination of up to nearly half of the observations. A comprehensive treatment of this topic can be found in \cite{MR161415}. This notion extends naturally to the weighted median setting. Given weights $\bm{\gamma}\in\R_{+}^n$, consider any weighted median $\med_{\bm \gamma}(\bm\theta)$, that is, any minimizer of the $L^1$ loss function\footnote{As explained before, $|\bm{x}|^r=\{|x_k|^r;\, k\in\br{n}\}$ for $\bm{x}=\{x_k;\, k\in\br{n}\}\in\R^n$ and $r>0$.} 
\[
\ell^1_{\bm{\gamma},\bm\theta}(\vartheta) = \langle \bm{\gamma}, |\bm{\theta}-\vartheta|\rangle.
\]
Equivalently, $\med_{\bm \gamma}(\bm\theta)$ may be chosen as any point in the interval
\[
\Pigl[\sup\Pigl\{\vartheta:\frac{\dd}{\dd\vartheta^-}\ell^1_{\bm{\gamma},\bm\theta}(\vartheta)<0\Pigr\},\,\inf\Pigl\{\vartheta:\frac{\dd}{\dd\vartheta^+}\ell^1_{\bm{\gamma},\bm\theta}(\vartheta)>0\Pigr\}\Pigr].
\]
Typically, one considers the middle point of the interval above. It is known that weighted medians are robust and that if the weights of the corrupted indices accumulate less than half of the total mass, then the weighted median remains bounded (see, e.g.,~\cite[Prop.~2]{MR2412551}). In our context, however, this result is not directly applicable, as our estimator operates with the ordered values of the data. In this case, a single perturbation to the unordered sample can result in all the ordered samples to be considered as perturbed, though the estimator remains bounded. For this reason, we adapt the methods of~\cite[Prop.~2]{MR2412551} to our context.

In the $r=1$ regime, the loss can be written as
\[
\ell^1_{\bm w}(s)
=
\sum_{k=1}^n w_k|\psi_k|
\bigg|
\frac{X_k^\ua}{\psi_k}-s
\bigg|,
\]
where the terms with $\psi_k=0$ and $w_k|\psi_k|=0$ do not affect the minimization. Thus, $\Sigma_1$ is a weighted median of the values $X_k^\ua/\psi_k$, with weights
\[
\gamma_k=w_k|\psi_k|.
\]
The next lemma records the robustness of this order-statistic estimator, showing that arbitrary contamination of up to $m$ elements in the unordered sample $X$ cannot drive $\Sigma_1$ outside a bounded range as long as the worst-case weight mass of the affected order statistics remains below one half of the total weight.

\begin{lemma}
\label{lem:weighted_breakdown}
Let $\bm{X} \in \R^n$ and $\widetilde{\bm{X}} \in \Theta_m(\bm{X})$. Define the quantities 
\begin{gather*}
\un{X}\coloneqq\min_{i\in\br{n}}X_i,
\quad
\ov{X}\coloneqq\max_{i\in\br{n}}X_i,
\quad
M_-\coloneqq
\min_{i,k\in\br{n}, \psi_k\ne 0}\frac{X_i}{\psi_k}
\quad\text{and}\quad
M_+\coloneqq
\max_{i,k\in\br{n}, \psi_k\ne 0}\frac{X_i}{\psi_k}.
\end{gather*}
Set $\gamma_k \coloneqq w_k |\psi_k|$ for $k \in \br{n}$ and let $C \subset [n]$ denote the set of indices $k$ for which $\widetilde X_k^\ua$ lies outside the range $[\un{X}, \ov{X}]$. If $\sum_{k \in C} \gamma_k < \frac{1}{2} \|\bm\gamma\|_1$, then $\Sigma_1(\widetilde{\bm{X}})\in[M_-,M_+]$. Consequently,
\begin{equation}
\label{eq:vesigmaone}
\ve(\Sigma_1)
\geq
\frac{1}{n}
\max\bigg\{
m\in\{0,\ldots,n\}\ ; 
\sum_{k=n-m+1}^n\gamma^\ua_k
<
\frac{1}{2}\|\bm\gamma\|_1
\bigg\},
\end{equation}
\end{lemma}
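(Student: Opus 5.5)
The plan is to use the representation of $\Sigma_1$ given just before the lemma: it is a weighted median of the ratios $R_k\coloneqq \widetilde X^\ua_k/\psi_k$ with weights $\gamma_k=w_k|\psi_k|$. Indices with $\gamma_k=0$, in particular those with $\psi_k=0$, do not enter the loss and are discarded. The argument has two parts. First, every ratio coming from an index outside $C$ lies in $[M_-,M_+]$. Second, a one-sided derivative argument shows that the loss is strictly monotone outside $[M_-,M_+]$ whenever the weight carried by $C$ is below half of the total.

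\emph{Step 1 (uncorrupted ratios are confined).} Take $k\notin C$ with $\psi_k\neq0$. Then $\widetilde X^\ua_k\in[\un X,\ov X]$. The map $x\mapsto x/\psi_k$ is monotone, so $R_k$ lies between $\un X/\psi_k$ and $\ov X/\psi_k$. Both endpoints are of the form $X_i/\psi_k$, hence lie in $[M_-,M_+]$, and so does $R_k$.

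\emph{Step 2 (location of the minimizers).} Write $\ell(s)=\sum_k\gamma_k|R_k-s|$. For $s>M_+$, the left derivative is
\[
\frac{\dd}{\dd s^-}\ell(s)=\sum_{k:R_k<s}\gamma_k-\sum_{k:R_k\ge s}\gamma_k .
\]
By Step 1, every $k\notin C$ falls in the first sum. Hence
\[
\frac{\dd}{\dd s^-}\ell(s)\ge \|\bm\gamma\|_1-2\sum_{k\in C}\gamma_k>0 .
\]
The right derivative is at least the left one, so $\ell$ is strictly increasing on $(M_+,\infty)$, and no minimizer lies there. The symmetric argument, using the right derivative, excludes $(-\infty,M_-)$. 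The whole argmin interval described before the lemma is therefore contained in $[M_-,M_+]$. In particular its midpoint, and indeed any selection $\Sigma_1(\widetilde{\bm X})$, lies in $[M_-,M_+]$.

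\emph{Step 3 (breakdown bound).} Replacing at most $m$ entries of $\bm X$ produces at most $m$ values outside $[\un X,\ov X]$. After sorting, $\widetilde{\bm X}^\ua$ therefore has $|C|\le m$. The positions in $C$ are not under our control, so we bound $\sum_{k\in C}\gamma_k$ by its worst case, the sum of the $m$ largest weights, $\sum_{k=n-m+1}^n\gamma^\ua_k$. This step is legitimate because $\bm\gamma$ depends only on $\bm w$ and $\bm\psi$, not on the data. This holds for $\bm\psi^\Det$ and, conditionally on the auxiliary sample, for $\bm\psi^\rnd$, with $\bm w=\omega(\bm\psi)$. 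If this sum is below $\tfrac12\|\bm\gamma\|_1$, Step 2 gives $|\Sigma_1(\widetilde{\bm X})|\le\max\{|M_-|,|M_+|\}$ uniformly over $\Theta_m(\bm X)$. This bound depends only on $\bm X$, so the supremum is finite. Hence $\ve(\Sigma_1,\bm X)$ is at least the maximal such $m$, for every $\bm X$. Taking the infimum over $\bm X$ and dividing by $n$ gives \eqref{eq:vesigmaone}.

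The main point of care is Step 2, specifically the handling of ties ($R_k=s$) and of the non-unique argmin. Working with one-sided derivatives and placing ties on the unfavourable side, as in the display above, makes the strict inequality hold regardless. The other subtle point is the need for a worst-case bound on the weight of $C$: one corrupted unordered entry can shift many order statistics. However, only entries that land outside $[\un X,\ov X]$ matter, and there are at most $m$ of them.
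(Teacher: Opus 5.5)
Your proof is correct and follows essentially the same route as the paper. You confine the ratios $\widetilde X^\ua_k/\psi_k$, $k\notin C$, to $[M_-,M_+]$, use one-sided derivatives of the weighted $L^1$ loss to force every minimizer into $[M_-,M_+]$ when $\sum_{k\in C}\gamma_k<\tfrac12\|\bm\gamma\|_1$, and then bound $\sum_{k\in C}\gamma_k$ by the sum of the $m$ largest weights via $|C|\le m$. Using the left derivative on $(M_+,\infty)$ and the right one on $(-\infty,M_-)$ is, if anything, a slightly more direct way than the paper's right-derivative formulation to exclude minimizers there.
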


\begin{proof}
Since $\bm{X} \in \R^n$, $\widetilde{\bm{X}} \in \Theta_m(\bm{X})$, at least $n-m$ entries of $\widetilde{\bm{X}}$ are identical to those of $\bm{X}$ and thus lie in $[\un{X},\ov{X}]$. Thus, the same is true of $\widetilde{\bm{X}}^\uparrow$, so $|C|\le m$. For any $k \notin C$, 
\[
M_-
\le\widetilde{\bm{X}}^\uparrow_k/\psi_k 
\le M_+.
\]
Note that $M_-$ and $M_+$ depend solely on the uncorrupted sample $\bm X$ and the proxy $\bm\psi$. For any $\vartheta > M_+$, the right derivative of the loss function $\ell_{\bm w}^1(\vartheta)$ satisfies
\[
\frac{\dd}{\dd\vartheta} \ell_{\bm w}^1(\vartheta) 
= \sum_{k \notin C} \gamma_k + \sum_{k \in C} \gamma_k \sgn\big(\vartheta - \widetilde{X}^\ua_k/\psi_k\big) \ge \|\bm\gamma\|_1 - 2 \sum_{k \in C} \gamma_k > 0.
\]
Similarly, the left derivative is strictly negative for any $\vartheta < M_-$. Thus, any minimizer $\Sigma_1(\widetilde{\bm{X}})$ must lie in $[M_-, M_+]$, giving the first claim.   Since $|C|\leq m$, we have
\[
\sum_{k\in C}\gamma_k
\leq
\sum_{k=n-m+1}^n\gamma_k^\ua.
\]
Therefore, if the sum of the $m$ largest weights is strictly smaller
than one half of the total weight, then the estimator remains bounded
under every replacement of at most $m$ original observations. Taking
the largest such $m$ gives the lower bound in~\eqref{eq:vesigmaone}.
\end{proof}

We now derive an asymptotic version of the lower bound
in~\eqref{eq:vesigmaone} when the weights are generated from the proxy
through $w_k=\omega(\psi_k)$, for some bounded function $\omega$.

\begin{prop}
\label{prop:breakdown}
Suppose $\bm{w}=\omega(\bm\psi)$ for some bounded function
$\omega:\R\to\R_+$. Let $W_n$ denote the empirical distribution
function of the weights
\[
\gamma_k=\omega(\psi_k)|\psi_k|.
\]
Let $\zeta$ be a random variable distributed according to $\Psi$ and
define
\[
W(x)\coloneqq
\p\big[\omega(\zeta)|\zeta|\le x\big],
\]
for $x\ge 0$. Assume that
\[
0<
\int_0^1W^{-1}(u)\dd u
<\infty
\]
and that
\begin{equation}
\label{eq:weight-quantile-convergence}
\lim_n\int_0^1
\big|W_n^{-1}(u)-W^{-1}(u)\big|
\dd u
=0.
\end{equation}
Then,
\begin{align}
\label{eq:breakdown-limit}
\liminf_{n\to\infty}\ve(\Sigma_1)
\geq
1-
\inf\bigg\{
p\in(0,1):
\int_p^1 W^{-1}(u)\dd u
<
\frac{1}{2}\int_0^1W^{-1}(u)\dd u
\bigg\}.
\end{align}
\end{prop}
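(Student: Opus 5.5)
Starting from the finite-sample lower bound \eqref{eq:vesigmaone} of Lemma~\ref{lem:weighted_breakdown}, rewritten through the empirical quantile function of the weights. Since $W_n$ is the empirical distribution function of $\gamma_1,\dots,\gamma_n$, its quantile function satisfies $W_n^{-1}(u)=\gamma^\ua_k$ for $u\in((k-1)/n,k/n]$. Hence
\[
\frac1n\sum_{k=n-m+1}^n\gamma_k^\ua=\int_{1-m/n}^1W_n^{-1}(u)\,\dd u,
\qquad
\frac1n\|\bm\gamma\|_1=\int_0^1W_n^{-1}(u)\,\dd u .
\]
Dividing the condition in \eqref{eq:vesigmaone} by $n$ therefore gives
\[
\ve(\Sigma_1)\ge \frac1n\max\Big\{m:\ G_n(1-m/n)<\tfrac12 G_n(0)\Big\},
\qquad
G_n(p)\coloneqq\int_p^1W_n^{-1}(u)\,\dd u .
\]
Set $G(p)\coloneqq\int_p^1W^{-1}(u)\,\dd u$. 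By \eqref{eq:weight-quantile-convergence},
\[
\sup_{p\in[0,1]}|G_n(p)-G(p)|\le\int_0^1|W_n^{-1}-W^{-1}|\,\dd u\to0 .
\]
So $G_n$ converges uniformly to $G$, and in particular $G_n(0)\to G(0)\in(0,\infty)$.

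Let $p^*$ denote the infimum on the right-hand side of \eqref{eq:breakdown-limit}. We may assume $p^*<1$; otherwise there is nothing to prove.

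Fix $\varepsilon>0$ and pick $p\in(p^*,p^*+\varepsilon)$ with $G(p)<\tfrac12G(0)$. Such a $p$ exists because $G$ is non-increasing and continuous: $W^{-1}\ge0$ is integrable, so $G$ is an absolutely continuous integral. Let $\eta\coloneqq\tfrac12G(0)-G(p)>0$. Choose $m_n\coloneqq\lfloor n(1-p)\rfloor$, so that $1-m_n/n\ge p$. Monotonicity of $G_n$ and uniform convergence then give
\[
G_n(1-m_n/n)\le G_n(p)\le G(p)+\eta/3<\tfrac12G(0)-\eta/3\le\tfrac12G_n(0)
\]
for all large $n$. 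Hence $\ve(\Sigma_1)\ge m_n/n\ge 1-p-1/n$, and therefore $\liminf_n\ve(\Sigma_1)\ge1-p^*-\varepsilon$. Letting $\varepsilon\downarrow0$ proves \eqref{eq:breakdown-limit}.

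Two details need care, and neither is deep.

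\textbf{Identification of the partial sums.} Expressing the sum of the $m$ largest weights as a tail integral of $W_n^{-1}$ is exact only with the convention that $W_n^{-1}$ is left-continuous. The generalized inverse $W_n^{-1}(u)=\inf\{x:W_n(x)\ge u\}$ is left-continuous, and ties among the $\gamma_k$ do not matter, since only the values $\gamma^\ua_k$ enter.

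\textbf{Existence of a good $p$.} Strict inequality at a point slightly above $p^*$ follows directly from the definition of the infimum, so no continuity of $G$ at $p^*$ itself is needed. Continuity of $G$ is used only to guarantee that the set in the infimum is an interval $(p^*,1)$. If $\Psi$ and $\omega$ are such that the proxies are random, as for $\bm\psi^\rnd$, the same argument works along subsequences or in probability, interpreting \eqref{eq:weight-quantile-convergence} in the corresponding sense.
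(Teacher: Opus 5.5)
Your proof is correct and follows essentially the same route as the paper. Both rewrite the finite-sample bound \eqref{eq:vesigmaone} through the empirical quantile function $W_n^{-1}$, take $m_n=\lfloor n(1-p)\rfloor$ for an admissible $p$, and use the $L^1$ convergence \eqref{eq:weight-quantile-convergence} to get the strict inequality for all large $n$. The only differences are cosmetic: you use monotonicity of $G_n$ through $1-m_n/n\ge p$ where the paper uses $1-m_n/n\to p$, and you approximate the infimum by $\varepsilon$ where the paper takes a supremum over admissible $p$.
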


\begin{proof}
By Lemma~\ref{lem:weighted_breakdown}, if the sum of the $m$ largest
weights is strictly smaller than one half of the total weight mass, then
$\Sigma_1$ remains bounded under every contamination of at most $m$
observations. We then use this sufficient condition to obtain the asymptotic
lower bound. Fix $p\in(0,1)$ such that
\[
\int_p^1 W^{-1}(u)\dd u
<
\frac{1}{2}\int_0^1W^{-1}(u)\dd u,
\]
and set $m_n\coloneqq\lfloor n(1-p)\rfloor$. Since $W_n^{-1}$ is the empirical quantile function of the weights
$\bm\gamma$, we have
\[
\frac{1}{n}
\sum_{k=n-m_n+1}^n\gamma_k^\ua
=
\int_{1-m_n/n}^1W_n^{-1}(u)\dd u
\quad\quad\quad
and
\quad\quad\quad
\frac{1}{n}\|\bm\gamma\|_1
=
\int_0^1W_n^{-1}(u)\dd u.
\]
Using \eqref{eq:weight-quantile-convergence}, as well as the fact that 
$1-m_n/n$ converges to $p$, it follows that 
\[
\lim_{n\to\infty}\frac{1}{n}
\sum_{k=n-m_n+1}^n\gamma_k^\ua
=
\int_p^1W^{-1}(u)\dd u
<\frac{1}{2}\int_0^1W^{-1}(u)\dd u
=\lim_{n\to\infty}\frac{1}{2n}\|\bm\gamma\|_1.
\]
Consequently, for all sufficiently large $n$,
\[
\sum_{k=n-m_n+1}^n\gamma_k^\ua
<
\frac{1}{2}\|\bm\gamma\|_1.
\]
The finite-sample lower bound~\eqref{eq:vesigmaone} therefore gives
\[
\ve(\Sigma_1)
\geq
\frac{m_n}{n}.
\]
Since $m_n/n$ converges to $1-p$, we obtain
\[
\liminf_{n\to\infty}\ve(\Sigma_1)
\geq
1-p.
\]
The conclusion follows by taking the supremum over all $p\in(0,1)$
satisfying the strict integral inequality above.
\end{proof}

\begin{rem}
\label{rem:Gaussian-breakdown}

In the case of the standard normal distribution $\Psi=\Phi$, the right-hand side of~\eqref{eq:breakdown-limit} can be approximated numerically. The following table reports Monte Carlo approximations of the asymptotic breakdown point for several choices of the weight function $\omega$.

\begin{table}[ht]
\centering
\begingroup
\renewcommand{\arraystretch}{1.35}
\setlength{\tabcolsep}{9pt}
\begin{tabular}{|c|c|c|c|c|}
\hline
Weight function & $\omega_1(x)=1$ & $\omega_2(x)=\frac{1}{1+|x|}$ & $\omega_3(x)=\frac{1}{.01+|x|}$ & $\omega_4(x)=\exp(-x^2/2)$ \\
\hline
Breakdown point & 23.9\% & 32.6\% & 48.6\% & 34.3\% \\
\hline
\end{tabular}
\endgroup
\vspace{4pt}
\caption{Asymptotic breakdown point for the standard normal law $\Psi=\Phi$.}
\label{tab:Gaussian-breakdown}
\end{table}

\noindent It is worth noting that the breakdown point can be made arbitrarily close to the $50\%$ benchmark of the usual median by taking $\omega$ close to $x\mapsto 1/|x|$. This limit, however, is unstable: the weights then concentrate around the central quantiles, where $|\psi_k|$ is close to zero, giving those observations a disproportionate influence on the estimator. One possible way to avoid this degeneracy would be to consider regularized versions of the weight function, possibly depending on $n$, such as $x\mapsto 1/(n^{-1/2}+|x|)$, or data-dependent variants such as $x\mapsto 1/(\min_{k\in\br{n}}|X_k|+|x|)$. We leave such extensions for future work.
\end{rem}

\section{Median Absolute Deviation Estimator}
\label{sec:MAD}

\noindent In view of the robustness discussion in the previous section, it is natural to ask how our estimators compare with classical median-based scale estimators. The most immediate reference point in this direction is the median absolute deviation (MAD). The MAD was studied by Hampel and Huber~\cite{MR362657,MR606374}, with Hampel attributing the idea to Gauss~\cite{Gauss1816}. In the usual location-scale setting, it has a $50\%$ asymptotic breakdown point and has also been analysed under sparse contamination and dependent samples~\cite{MR395039,MR418179}.\\

\noindent The additive-noise model considered in this manuscript does not fall exactly within the classical location-scale setting for which the MAD is usually defined. Consequently, the standard MAD theory does not directly yield a scale estimator adapted to the present framework. The underlying median-based principle; however, can be reformulated naturally in our setting. We develop this reformulation explicitly and use the resulting estimator as a benchmark for the estimators introduced above. We then establish deterministic and probabilistic guarantees under assumptions of the same type as those considered in the previous sections. These bounds are of independent interest and also provide the basis for a fair comparison with our estimators.\\

\noindent In order to give the precise formulation of this adapted MAD estimator, we proceed as follows. Let $Z$ be a random variable with distribution function $\Psi$, and define  $m:=\Psi^{-1}(\tfrac{1}{2})$. We denote by $\Psi_*$ the distribution function of $|Z-m|$, namely
\[
\Psi_*(x)
\coloneqq
\Psi(m+x)-\Psi((m-x)-),
\]
for $x\ge 0$. The corresponding deterministic MAD estimator is defined by
\begin{equation}
\label{eq:MAD}
\Sigma_\MAD^\Det
\coloneqq
\frac{\med(|\bm{X}-\med(\bm{X})|)}
{\Psi_*^{-1}(\tfrac{1}{2})}.
\end{equation}
Here and throughout this section, $\med(\bm{\xi})$ denotes the empirical quantile $\iota[\bm\xi]^{-1}(\tfrac12)$.  Note that if $\Psi$ is symmetric about its median (so that $\Psi(\Psi^{-1}(\tfrac{1}{2})+x)=1-\Psi(\Psi^{-1}(\tfrac{1}{2})-x)$ for $x\in\R$), then 
\[
\Psi_*^{-1}\big(\tfrac{1}{2}\big)
= \Psi^{-1}\big(\tfrac{3}{4}\big)
- \Psi^{-1}\big(\tfrac{1}{2}\big).
\]
\noindent As with $\bm\psi^\Det$, it is often difficult to compute the quantiles necessary to compute the MAD, especially when $\Psi$ is not symmetric. However, if sampling algorithms for $\Psi$ are available, we may consider the randomized MAD estimator
\begin{equation}
\label{eq:MAD-rnd}
\Sigma_\MAD^\rnd
\coloneqq \frac{\med(|\bm{X}-\med(\bm{X})|)}{\med(|\bm{\xi}-\med(\bm{\xi})|)},
\end{equation}
where $\bm\xi$ is an auxiliary sample\footnote{The estimators $\Sigma^\rnd_r$ and $\Sigma^\rnd_\MAD$ both use auxiliary samples of size $n$, keeping the complexity of computing them within $\Oh(n\log n)$. However, other sizes are also reasonable when $n$ is moderately small.} of $n$ i.i.d. copies with law $\Psi$.\\

\noindent The randomized estimator \eqref{eq:MAD-rnd} differs from \eqref{eq:MAD} only through the normalizing factor: the deterministic constant $\Psi_*^{-1}(\tfrac{1}{2})$ is replaced by the empirical MAD of an auxiliary i.i.d. sample with law $\Psi$. Since this auxiliary sample is drawn directly from the noise distribution and has no signal component, this empirical normalization typically converges to $\Psi_*^{-1}(\tfrac{1}{2})$ at rate $\Oh_\p(n^{-1/2})$. Indeed, by Remark~\ref{rem:MAD-rnd-det} below, \eqref{eq:MAD-rnd} and \eqref{eq:MAD} have the same leading behaviour:
\begin{equation*}
\Sigma_\MAD^\rnd=\Sigma_\MAD^\Det (1+\Oh_\p(n^{-1/2})),
\end{equation*}
with the additional fluctuation coming from the auxiliary sample. This fluctuation is asymptotically negligible under the assumptions below, but may be visible for moderate values of~$n$. In the sequel, we write $\Sigma_\MAD$ when a statement applies to either version.\\

\begin{rem}
\label{rem:sn_qn_extensions}
While the MAD estimator provides a clean benchmark, it is natural to consider whether the higher-efficiency robust scale estimators introduced by~\cite{MR1245360}, denoted by $S_n$ and $Q_n$ therein, can circumvent the convergence bottlenecks identified in Theorem~\ref{thm:CI-MAD}. A non-asymptotic concentration analysis for these estimators could potentially be developed by mapping the problem from classical empirical processes to the theory of \textit{$U$-empirical processes}.

Specifically, because $Q_n$ is defined as a localized quantile functional of the pairwise absolute differences, its concentration properties are fundamentally governed by the uniform fluctuations of the pairwise $U$-empirical distribution function:
\begin{equation}
H_n(t) 
\coloneqq \binom{n}{2}^{-1} \sum_{1 \le i < j \le n} \1_{\{|X_i - X_j| \le t\}}.
\end{equation}
In the independent regime of Corollary~\ref{cor:MAD-iid}, concentration inequalities for $\sup_{t>0} |H_n(t) - H(t)|$ can be established using standard $U$-process decoupling inequalities and uniform bounds for bounded canonical kernels; see, e.g.~\cite{MR1370308}. More generally, we conjecture that both $S_n$ and $Q_n$ may enjoy guarantees similar to those obtained here for MAD. While the proofs would require more work and careful calculations, we expect them to be of the same nature as those obtained for MAD in Theorem~\ref{thm:CI-MAD} and its corollaries.
\end{rem}

\noindent Since the MAD is defined through a median, its analysis will be phrased in terms of distances between distribution functions. We shall use the L\'evy distance, whose definition we recall next. Given two distribution functions $F$ and $G$ on $\R$, their L\'evy distance is defined by
\[
\Ld(F,G)
\coloneqq
\inf\left\{
\ve>0:
G(x-\ve)-\ve\le F(x)\le G(x+\ve)+\ve
\ \text{for all } x\in\R
\right\}.
\]
It is well known that $\Ld$ metrises weak convergence and is dominated by the Kolmogorov distance
\[
\Kd(F,G)\coloneqq \sup_{x\in\R}|F(x)-G(x)|.
\]
Moreover, if either $F$ or $G$ is Lipschitz continuous with constant $L$, then
\[
\Kd(F,G)\le (1+L)\Ld(F,G),
\]
see, e.g.,~\cite[\S3]{ProbMetrics}.

\noindent In order to state the concentration estimates in a compact form, we first introduce the notation used for empirical laws. Given a vector $\bm v=(v_1,\ldots,v_n)\in\R^n$, we write
\[
\iota[\bm v]
\coloneqq
\frac{1}{n}\sum_{k=1}^n \delta_{v_k}
\]
for the empirical measure associated with $\bm v$. We use the same notation for its distribution function, namely
\[
\iota[\bm v](x)
=
\frac{1}{n}\sum_{k=1}^n \1_{\{v_k\le x\}},
\]
for a real number $x$. Thus, $\iota$ embeds vectors in $\R^n$ into probability measures on $\R$. With this notation, our first objective is to provide concentration inequalities, in L\'evy distance, for $\iota[\bm Z]$ and $\iota[\bm X]$ around $\Psi$ and $\Psi(\cdot/\sigma)$, respectively.

\begin{prop}
\label{prop:CI-Levy}
For any $\bm{Y}\in\R^n$ and $p > 0$, we have
\begin{equation}
\Ld(\iota[\bm{X}], \Psi(\cdot/\sigma))
\le (\sigma\vee 1)\Ld(\iota[\bm{Z}], \Psi) 
+ \big( n^{-1}\|\bm{Y}\|_p^p\big)^{\frac{1}{p+1}}.
\end{equation}
\end{prop}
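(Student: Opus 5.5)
The plan is to compare three distribution functions in sequence, $\iota[\bm X]$, $\iota[\sigma\bm Z]$ and $\Psi(\cdot/\sigma)$, and to use the triangle inequality for $\Ld$:
\[
\Ld(\iota[\bm X],\Psi(\cdot/\sigma))\le \Ld(\iota[\bm X],\iota[\sigma\bm Z])+\Ld(\iota[\sigma\bm Z],\Psi(\cdot/\sigma)).
\]
The second term is a pure scaling question. Note that $\iota[\sigma\bm Z](x)=\iota[\bm Z](x/\sigma)$ and that $\Psi(\cdot/\sigma)$ is the law of $\sigma Z$. Suppose $\ve>\Ld(\iota[\bm Z],\Psi)$, so that $\Psi(y-\ve)-\ve\le\iota[\bm Z](y)\le\Psi(y+\ve)+\ve$ for all $y$. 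Substituting $y=x/\sigma$ gives
\[
\Psi\big((x-\sigma\ve)/\sigma\big)-\ve\le \iota[\sigma\bm Z](x)\le \Psi\big((x+\sigma\ve)/\sigma\big)+\ve .
\]
Set $\ve'=(\sigma\vee1)\ve$. Since $\sigma\ve\le\ve'$, $\ve\le\ve'$ and $\Psi$ is monotone, $\ve'$ is admissible in the definition of $\Ld$. Hence $\Ld(\iota[\sigma\bm Z],\Psi(\cdot/\sigma))\le(\sigma\vee1)\Ld(\iota[\bm Z],\Psi)$.

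The first term is where the signal enters, and here I would use a Markov/Chebyshev-type counting argument. Fix $\ve>0$ and let $N_\ve=|\{k:|Y_k|>\ve\}|$. Then $N_\ve\le\ve^{-p}\|\bm Y\|_p^p$. For every $k$ outside this set, $\sigma Z_k\le x-\ve$ implies $X_k\le x$, and $X_k\le x$ implies $\sigma Z_k\le x+\ve$. Counting the indices gives
\[
\iota[\sigma\bm Z](x-\ve)-\tfrac{N_\ve}{n}\le\iota[\bm X](x)\le\iota[\sigma\bm Z](x+\ve)+\tfrac{N_\ve}{n}.
\]
Consequently, $\ve$ is admissible for $\Ld(\iota[\bm X],\iota[\sigma\bm Z])$ whenever $n^{-1}\ve^{-p}\|\bm Y\|_p^p\le\ve$, that is, whenever $\ve^{p+1}\ge n^{-1}\|\bm Y\|_p^p$. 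Choosing $\ve=(n^{-1}\|\bm Y\|_p^p)^{1/(p+1)}$ gives exactly the second term of the claim. If $\bm Y=\bm0$ this term vanishes, and the bound holds trivially by letting $\ve\downarrow0$.

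Adding the two bounds completes the argument. The step needing most care is the counting step. There one must check the direction of the inequalities and the handling of strict versus non-strict $|Y_k|>\ve$ at boundary points. One must also use the fact that the definition of $\Ld$ takes an infimum, so an admissible $\ve$ only needs to satisfy the two-sided sandwich with the same $\ve$ in both the shift and the vertical gap. The balancing choice $\ve^{p+1}=n^{-1}\|\bm Y\|_p^p$ is what produces the exponent $1/(p+1)$. Note that $p>0$ suffices here, since Hölder is not used, only Markov's inequality.
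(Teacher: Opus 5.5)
Your proposal is correct and follows the paper's proof essentially step for step: it uses the same event-inclusion/Markov counting of the indices with $|Y_k|>\ve$, the same two-sided sandwich with the balancing choice $\ve=(n^{-1}\|\bm Y\|_p^p)^{1/(p+1)}$, and the same triangle inequality through $\iota[\sigma\bm Z]$. Your explicit verification of the scaling bound $\Ld(\iota[\sigma\bm Z],\Psi(\cdot/\sigma))\le(\sigma\vee 1)\Ld(\iota[\bm Z],\Psi)$ and your treatment of the case $\bm Y=\bm 0$ fill in details that the paper only asserts.
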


\begin{proof}
For any $x \in \R$, $\ve > 0$ and $k\in\br{n}$, the event inclusion 
\[
\{X_k\le x\}
=\{Y_k+\sigma Z_k\le x\}
\subset 
\{\sigma Z_k\le x + \ve\}
\cup\{|Y_k|>\ve\},
\]
yields the inequality
\[
\iota[\bm{X}](x) 
= \frac{1}{n}\sum_{k=1}^n 
    \1_{\{Y_k + \sigma Z_k \le x\}}
\le \frac{1}{n}\sum_{k=1}^n 
    \1_{\{\sigma Z_k \le x + \ve\}} 
+ \frac{1}{n}\sum_{k=1}^n 
    \1_{\{|Y_k| > \ve\}}.
\]
Then, using the inequality $\1_{\{|Y_k| > \ve\}}\le \ve^{-p}|Y_k|^p$, we have 
$$\sum_{k=1}^n \1_{\{|Y_k| > \epsilon\}} \le \ve^{-p}\|\bm{Y}\|_p^p.$$ 
An analogous lower bound follows similarly, yielding the sandwich
\[
\iota[\sigma \bm{Z}](x - \ve) 
- n^{-1}\ve^{-p}\|\bm{Y}\|_p^p
\le \iota[\bm{X}](x) 
\le \iota[\sigma\bm{Z}](x + \ve) 
+ n^{-1}\ve^{-p}\|\bm{Y}\|_p^p.
\]Taking $\ve = (n^{-1}\|\bm{Y}\|_p^p)^{1/(p+1)}$, the previous display gives 
$$\Ld(\iota[\bm{X}],\iota[\sigma\bm{Z}])\le \ve.$$ 
The triangle inequality and the bound 
$$\Ld(\iota[\sigma\bm{Z}], \Psi(\cdot/\sigma))\le (\sigma\vee 1)\Ld(\iota[\bm{Z}], \Psi),$$ 
then give the desired claim.
\end{proof}

\noindent We now turn the previous L\'evy-distance control into a concentration inequality for $\Sigma_\MAD$. 

\begin{thm}
\label{thm:CI-MAD}
Fix $p>0$. Assume that $\Psi$ and $\Psi_*$ are continuous at their respective medians
$m=\Psi^{-1}(\tfrac{1}{2})$ and $m_*=\Psi_*^{-1}(\tfrac{1}{2})$, and that,
for some $\kappa,\delta>0$ and all $\eta\in(0,\delta]$,
\begin{equation}
\label{eq:Dini-LB-F}
\kappa\eta
\le \min\big\{\tfrac{1}{2}-\Psi(m-\eta), 
\Psi(m+\eta)-\tfrac{1}{2},
\tfrac{1}{2}-\Psi_*(m_*-\eta), 
\Psi_*(m_*+\eta)-\tfrac{1}{2}\big\}.
\end{equation}
Define
\[
\Delta_\MAD
\coloneqq
\max\{\sigma,1\}\Ld(\iota[\bm{Z}],\Psi)
+
\big( n^{-1}\|\bm{Y}\|_p^p \big)^{\frac{1}{p+1}}.
\]
There exists $\varepsilon_\MAD>0$, depending only on $\sigma,\kappa$ and $\delta$,
such that, if $\Delta_\MAD<\varepsilon_\MAD$, then, almost surely,
\begin{equation}
\label{eq:CI-MAD}
|\Sigma_\MAD^\Det - \sigma| 
\le 
\frac{(1+\sigma/\kappa)(2+\sigma/\kappa)}
{\Psi_*^{-1}(\tfrac{1}{2})}
\Delta_\MAD.
\end{equation}
\end{thm}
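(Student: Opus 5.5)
The plan is to propagate the Lévy-distance control of Proposition~\ref{prop:CI-Levy} through the two median operations in the definition~\eqref{eq:MAD}, using the local growth condition~\eqref{eq:Dini-LB-F} to turn distribution-function closeness into quantile closeness. Write $F_\sigma\coloneqq\Psi(\cdot/\sigma)$. Its median is $\sigma m$, and the law of $|\sigma Z-\sigma m|$ is $\Psi_*(\cdot/\sigma)$, with median $\sigma m_*$. Proposition~\ref{prop:CI-Levy} gives $\Ld(\iota[\bm X],F_\sigma)\le\Delta_\MAD$. After dividing by $\Psi_*^{-1}(\tfrac12)=m_*$, the claim reduces to showing that $|\med(|\bm X-\med(\bm X)|)-\sigma m_*|$ is at most $(1+\sigma/\kappa)(2+\sigma/\kappa)\Delta_\MAD$.

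\emph{Step 1 (quantile stability).} I will prove a general lemma. Let $G$ satisfy $G(q+\eta)-\tfrac12\ge\kappa'\eta$ and $\tfrac12-G(q-\eta)\ge\kappa'\eta$ for $\eta\in(0,\delta']$, where $q$ is its median. Let $F$ satisfy $\Ld(F,G)\le\varepsilon$ with $\varepsilon$ small. Then $|F^{-1}(\tfrac12)-q|\le(1+1/\kappa')\varepsilon$. To see this, set $\eta=\varepsilon/\kappa'$. Then $F(q+\eta+\varepsilon)\ge G(q+\eta)-\varepsilon\ge\tfrac12+\kappa'\eta-\varepsilon$. Taking a slightly larger $\eta$ makes this strictly above $\tfrac12$, and the lower side is symmetric. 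For $F_\sigma$ the constant is $\kappa'=\kappa/\sigma$ and the window is $\delta'=\sigma\delta$. Applying the lemma to $\iota[\bm X]$ gives $|\med(\bm X)-\sigma m|\le(1+\sigma/\kappa)\Delta_\MAD$.

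\emph{Step 2 (Lévy distance after centering and folding).} Let $\hat m=\med(\bm X)$ and $h\coloneqq|\hat m-\sigma m|$. Compare the empirical law of $|\bm X-\hat m|$ with $\Psi_*(\cdot/\sigma)$. Distribution functions satisfy $\Pb(|V-a|\le x)=F(a+x)-F((a-x)-)$. Shifting the center by $h$ changes the relevant points by at most $h$, and the Lévy sandwich for $F$ holds at both endpoints. Together these give
\[
\Psi_*\bigl((x-\varepsilon-h)/\sigma\bigr)-2\varepsilon\le\iota[|\bm X-\hat m|](x)\le\Psi_*\bigl((x+\varepsilon+h)/\sigma\bigr)+2\varepsilon ,
\]
with $\varepsilon=\Delta_\MAD$. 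Hence the folded laws are within Lévy distance at most $2\varepsilon+h\le(2+(1+\sigma/\kappa))\varepsilon$ in the shift-and-level sense. I would keep track of the horizontal shift $\varepsilon+h$ and the vertical error $2\varepsilon$ separately rather than lumping them together.

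\emph{Step 3.} Apply the quantile argument of Step 1 to $\Psi_*(\cdot/\sigma)$ at $\sigma m_*$, using the $\Psi_*$ part of~\eqref{eq:Dini-LB-F}. The vertical error $2\varepsilon$ costs $2\sigma\varepsilon/\kappa$ horizontally, and the horizontal shift adds $\varepsilon+h$. This yields
\[
|\med(|\bm X-\hat m|)-\sigma m_*|\le\varepsilon+h+2\sigma\varepsilon/\kappa\le\varepsilon\,(1+\sigma/\kappa)(2+\sigma/\kappa)
\]
after a crude algebraic bound, which I will verify: $(1+a)(2+a)=2+3a+a^2\ge 2+3a$, and the left side is at most $(2+3a)\varepsilon$. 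The smallness threshold $\varepsilon_\MAD$ is chosen so that every $\eta$ used stays in $(0,\delta]$ after rescaling by $\sigma$, and so that $\hat m$ stays in the region where continuity of $\Psi$ and $\Psi_*$ at their medians applies. Something like $\varepsilon_\MAD=c\,\delta\min\{\kappa,\sigma\}/(1+\sigma/\kappa)^2$ should suffice.

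The main obstacle is Step 2: handling the Lévy sandwich correctly for the folded distribution, including left limits at $a-x$ and the randomness of the center $\hat m$. Continuity of $\Psi$ at $m$, and of $\Psi_*$ at $m_*$, is what lets the strict inequalities in the quantile definition go through. I will handle left limits by applying the Lévy inequality at $a-x-\epsilon'$ and letting $\epsilon'\downarrow0$.
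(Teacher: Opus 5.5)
Your proposal is correct and follows the same three-step route as the paper's proof: median stability from the local growth condition~\eqref{eq:Dini-LB-F}, then the folded sandwich with horizontal shift $\varepsilon+|\hat m-\sigma m|$ and vertical error $2\varepsilon$, then a second median-stability step, all closed by Proposition~\ref{prop:CI-Levy}. The only difference is bookkeeping in the last step: the paper packages the sandwich as a L\'evy distance $\max\{2\varepsilon,\varepsilon+|\hat m-\sigma m|\}\le(2+\sigma/\kappa)\varepsilon$ and reapplies Step~1 to get the product constant, whereas you keep the two errors separate, obtain the slightly sharper $(2+3\sigma/\kappa)\varepsilon$ (needing only $2\varepsilon<\kappa\delta$), and then relax it to the stated constant.
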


\begin{rem}
\label{rem:MAD-explicit-threshold}
The threshold $\varepsilon_\MAD$ in Theorem~\ref{thm:CI-MAD} may be chosen explicitly as
\[
\varepsilon_\MAD
=
\frac{\delta\kappa}{2+\sigma/\kappa}.
\]
For this choice, the condition $\Delta_\MAD<\varepsilon_\MAD$ guarantees that the median perturbations remain in the neighbourhood where the local growth estimate \eqref{eq:Dini-LB-F} applies.
\end{rem}

\begin{rem}
\label{rem:MAD-rnd-det}
Under the assumption~\eqref{eq:Dini-LB-F}, applying Theorem~\ref{thm:CI-MAD} to the i.i.d. auxiliary sample $\bm\xi$ and the Dvoretzky-Kiefer-Wolfowitz (DKW) inequality imply that
\begin{equation}
\label{eq:MAD-rnd-det}
\frac{\Sigma_\MAD^\rnd}{\Sigma_\MAD^\Det}= 1+\Oh_\p\big(n^{-1/2}\big).
\end{equation}
\end{rem}

\begin{rem}
\label{rem:Dini-LB-F}
Condition \eqref{eq:Dini-LB-F} is a local lower-growth assumption (related to Dini derivatives) at the two medians involved in the MAD construction. It requires that both $\Psi$ and $\Psi_*$ put mass at least linearly, from the left and from the right, around their respective medians. In particular, it is satisfied whenever $\Psi$ and $\Psi_*$ are absolutely continuous in neighbourhoods of $m$ and $m_*$, respectively, with densities bounded away from zero there. 
\end{rem}

\begin{rem}
\label{rem:MAD-consistency-rate}
The bound \eqref{eq:CI-MAD} immediately gives the consistency of the deterministic MAD estimator under the assumptions
\[
\|\bm{Y}\|_p^p=\oh_\p(n)
\qquad\text{and}\qquad
\Ld(\iota[\bm{Z}],\Psi)=\oh_\p(1).
\]
Indeed, under these conditions, $\Delta_\MAD=\oh_\p(1)$ and therefore, by~\eqref{eq:MAD-rnd-det},
$\Sigma_\MAD\cip\sigma$. More quantitatively, if for some
$\alpha\in[0,1)$ and $\gamma>0$,
\[
\|\bm{Y}\|_p^p=\Oh_\p(n^\alpha)
\qquad\text{and}\qquad
\Ld(\iota[\bm{Z}],\Psi)=\Oh_\p(n^{-\gamma}),
\]
then
\[
|\Sigma_\MAD-\sigma|
=
\Oh_\p\left(
n^{-\min\left\{\gamma,\frac{1-\alpha}{p+1}\right\}}
\right).
\]
\end{rem}

\begin{proof}[Proof of Theorem \ref{thm:CI-MAD}]
The proof has two parts. We first prove a deterministic stability estimate for the MAD functional. This is done in three steps. More precisely, we show that there exists a constant $C>0$ such that 
\[
|\Sigma_\MAD^\Det-\sigma|
\le
C\Ld\bigl(\iota[\bm{X}],\Psi(\cdot/\sigma)\bigr).
\]
Once this stability estimate is proved, Proposition~\ref{prop:CI-Levy} can be applied to control the right-hand side in terms of
\[
\Ld(\iota[\bm Z],\Psi)
\qquad\text{and}\qquad
\bigl(n^{-1}\|\bm Y\|_p^p\bigr)^{1/(p+1)}.
\]
The desired estimate will then follow.\\

\noindent \textbf{Step 1}\\
\noindent We claim that, if $\ve=\Ld(F,G)$ is sufficiently small, namely if
$\ve<\kappa\delta$, then the medians of both distributions are
separated by at most a constant multiple of $\ve$. In fact, we claim that
\begin{equation}
\label{eq:median-diff}
\min\big\{|\hat m-\sigma m|,
\sigma\delta+\ve\big\}
\le \ve (1+\sigma/\kappa),
\end{equation}
where $\hat{m} \coloneqq G^{-1}(1/2)$ is the empirical median. Let
$F(x) \coloneqq \Psi(x/\sigma)$ and $G(x) \coloneqq \iota[\bm{X}](x)$, for
$x\in\R$. The condition~\eqref{eq:Dini-LB-F} implies that
\begin{equation}
\label{eq:Dini-LB-F-scaled}
F(\sigma m-\eta) + \eta \kappa/\sigma 
\le F(\sigma m)
= \tfrac{1}{2}
\le F(\sigma m+\eta) - \eta \kappa/\sigma,
\end{equation}
for $0<\eta\leq \sigma\delta$. By the definition of the L\'evy distance, for any real $x$,
\begin{equation}
\label{eq:FG-Levy}
F(x - \ve) - \ve 
\le G(x) 
\le F(x + \ve) + \ve.
\end{equation}
If $|\hat m-\sigma m|\le \ve$, the claim follows, so it remains to establish the claim when $|\hat m-\sigma m|>\ve$. Suppose first that $\hat m-\ve >\sigma m$. By the definition of $\hat{m}$, we have $G(x)<1/2$ for $x<\hat{m}$, and consequently,
\begin{equation*}
F(x - \ve) - \ve 
\le G(x) < 1/2 
= F(\sigma m).
\end{equation*}
Taking $x$ converging to $\hat m$ from below, we obtain
\[
F(\hat{m} - \ve-) \le F(\sigma m) + \ve.
\]
Since $\hat{m} - \ve > \sigma m$, relation~\eqref{eq:Dini-LB-F-scaled} gives
\[
F(\hat{m} - \ve-) 
\ge 
F(\sigma m) 
+ 
\min\{\hat{m} - \ve - \sigma m,\sigma\delta\}\kappa/\sigma.
\]
Thus,
\[
\ve 
<\min\{\hat{m} - \sigma m,\sigma\delta + \ve\} 
\le \ve(1 + \sigma/\kappa).
\]
Since $\delta>0$ is fixed, the above inequality gives the desired bound as soon as $\ve<\delta\kappa$.

Suppose now that $\hat m+\ve <\sigma m$. Since $G(\hat{m}) \ge 1/2$ by right-continuity, an application of \eqref{eq:FG-Levy} with $x = \hat{m}$ gives 
\begin{equation*}
F(\sigma m)
= 1/2 
\le G(\hat{m}) 
\le F(\hat{m} + \ve) + \ve.
\end{equation*}
Since $\hat{m} + \varepsilon < \sigma m$, relation~\eqref{eq:Dini-LB-F-scaled} gives
\[
F(\hat{m} + \varepsilon) 
\le 
F(\sigma m) 
- 
\min\{\sigma m - \hat{m} - \varepsilon,\sigma\delta\}\kappa/\sigma.
\]
Combining both inequalities yields
\[
\min\{\sigma m - \hat{m},\sigma\delta+\ve\} 
\le 
\varepsilon(1 + \sigma/\kappa).
\]
Relation~\eqref{eq:median-diff} follows.\\

\noindent \textbf{Step 2}\\
\noindent Define, for $x\geq 0$,  
\[
G_*(x)\coloneqq \iota[|\bm X-\hat m|](x),
\qquad
F_*(x)\coloneqq F(\sigma m+x)-F((\sigma m-x)-)
=\Psi_*(x/\sigma).
\]
These are the distribution functions of the absolute deviations from the corresponding medians. Using~\eqref{eq:FG-Levy} at the two endpoints of the intervals defining $F_*$ and $G_*$, we obtain
\[
F_*(x-\ve-|\hat m-\sigma m|)-2\ve
\leq
G_*(x)
\leq
F_*(x+\ve+|\hat m-\sigma m|)+2\ve,
\]
for every $x\geq0$. Consequently,
\[
\Ld(F_*,G_*)
\leq
\max\{2\ve,\ve+|\hat m-\sigma m|\}.
\]

Using Step~1, we obtain
\begin{equation}
\label{eq:Levy-absolute-deviations}
\Ld(F_*,G_*)
\le
\ve(2+\sigma/\kappa),
\end{equation}
whenever $\ve<\delta\kappa$.\\

\noindent \textbf{Step 3}\\ 
\noindent We now apply the same median argument to the laws of the absolute deviations.
The median of $F_*$ is $\sigma m_*$, where $m_*=\Psi_*^{-1}(\tfrac12)$, while
the median of $G_*$ is
\[
G_*^{-1}(\tfrac12)
=
\med(|\bm X-\hat m|).
\]
Set $\ve_* \coloneqq \Ld(F_*,G_*)$. By \eqref{eq:Levy-absolute-deviations}, if
\[
\ve(2+\sigma/\kappa)<\delta\kappa,
\]
then $\ve_*<\delta\kappa$. Therefore the estimate of Step~1, applied now
to $F_*$ and $G_*$ and using the local growth condition for $\Psi_*$, gives
\[
\big|G_*^{-1}(\tfrac12)-F_*^{-1}(\tfrac12)\big|
\le
\ve_*(1+\sigma/\kappa).
\]
Since
\[
F_*^{-1}(\tfrac12)=\sigma\Psi_*^{-1}(\tfrac12),
\qquad
\Sigma_\MAD^\Det
=
\frac{G_*^{-1}(\tfrac12)}{\Psi_*^{-1}(\tfrac12)},
\]
we conclude that
\[
|\Sigma_\MAD^\Det-\sigma|
\le
\frac{1+\sigma/\kappa}{\Psi_*^{-1}(\tfrac12)}
\ve_*.
\]
Combining this with \eqref{eq:Levy-absolute-deviations} yields
\[
|\Sigma_\MAD^\Det-\sigma|
\le
\frac{(1+\sigma/\kappa)(2+\sigma/\kappa)}
{\Psi_*^{-1}(\tfrac12)}
\ve.
\]
Finally, Proposition~\ref{prop:CI-Levy} gives the stated bound in terms of
$\Delta_\MAD$.
\end{proof}

\noindent Theorem~\ref{thm:CI-MAD} reduces the control of the MAD estimator to the size of the signal perturbation and to the empirical discrepancy $\Ld(\iota[\bm Z],\Psi)$ of the noise sample. We now state resulting rates in two basic situations: i.i.d. noise, where the empirical discrepancy is controlled by the Dvoretzky-Kiefer-Wolfowitz (DKW) inequality, and a second one, which considers Gaussian dependence, where the rate is expressed in terms of the covariance quantity $D_n$.

\begin{cor}
\label{cor:MAD-iid}
Assume $\bm{Z}$ consists of i.i.d. random variables and suppose~\eqref{eq:Dini-LB-F} holds. Then,
\begin{equation*}
|\Sigma_\MAD - \sigma| 
= 
\Oh_\p\big(
n^{-1/(p+1)}\|\bm{Y}\|_p^{p/(p+1)}
+
n^{-1/2}
\big),
\end{equation*}
as $n$ tends to infinity.
\end{cor}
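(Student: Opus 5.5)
The plan is to combine Theorem~\ref{thm:CI-MAD} with the Dvoretzky-Kiefer-Wolfowitz inequality, and to treat the deterministic and randomized versions of $\Sigma_\MAD$ separately. We start with $\Sigma_\MAD^\Det$.

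\textbf{Step 1: the noise term.} Since $\Ld\le\Kd$ and the coordinates of $\bm Z$ are i.i.d.\ with law $\Psi$, DKW gives
\[
\p\big[\Kd(\iota[\bm Z],\Psi)>t\big]\le 2e^{-2nt^2}.
\]
Hence $\Ld(\iota[\bm Z],\Psi)=\Oh_\p(n^{-1/2})$. Consequently
\[
\Delta_\MAD=\Oh_\p\big(n^{-1/2}+n^{-1/(p+1)}\|\bm Y\|_p^{p/(p+1)}\big).
\]

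\textbf{Step 2: the threshold condition.} Theorem~\ref{thm:CI-MAD} only applies on the event $\{\Delta_\MAD<\varepsilon_\MAD\}$, so this is the main point to address. Write $R_n\coloneqq n^{-1/(p+1)}\|\bm Y\|_p^{p/(p+1)}+n^{-1/2}$.

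In the regime where $R_n=\oh_\p(1)$, the probability of $\{\Delta_\MAD\ge\varepsilon_\MAD\}$ tends to zero. On the complementary event, \eqref{eq:CI-MAD} gives $|\Sigma_\MAD^\Det-\sigma|\le C\Delta_\MAD$, with $C$ independent of $n$. Discarding an event of vanishing probability does not affect an $\Oh_\p$ statement, so the bound follows in this regime.

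The general case, where $R_n$ need not be small, needs care. I would argue along subsequences: if $R_n$ does not tend to zero in probability along a subsequence, the claim there only requires $|\Sigma_\MAD-\sigma|/R_n$ to be tight. On the event $\{R_n\ge c\}$ this reduces to tightness of $\Sigma_\MAD$ itself, which must be checked.

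Tightness is not automatic here, since the median of $|\bm X-\med(\bm X)|$ can be driven by a large signal $\bm Y$. I would split $\bm Y$ according to whether $n^{-1}\|\bm Y\|_p^p$ is large, and use the crude bound
\[
\med|\bm X-\med\bm X|\le 2\,\med|\bm X|\lesssim \sigma\,\med|\bm Z|+\big(n^{-1}\|\bm Y\|_p^p\big)^{1/p}.
\]
The second term comes from Markov's inequality applied to at least half of the coordinates. Its size is comparable to $R_n^{(p+1)/p}$, and $R_n^{(p+1)/p}\lesssim R_n$ whenever $R_n$ is bounded. Whatever remains unbounded in this analysis I expect to handle with the standard convention that $\Oh_\p$ rates are only meaningful when $R_n\to0$.

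\textbf{Step 3: the randomized version.} For $\Sigma_\MAD^\rnd$, Remark~\ref{rem:MAD-rnd-det} gives $\Sigma_\MAD^\rnd=\Sigma_\MAD^\Det(1+\Oh_\p(n^{-1/2}))$. Therefore
\[
|\Sigma_\MAD^\rnd-\sigma|\le|\Sigma_\MAD^\Det-\sigma|+\Sigma_\MAD^\Det\,\Oh_\p(n^{-1/2}).
\]
Since $\Sigma_\MAD^\Det=\sigma+\Oh_\p(R_n)$ is tight, the extra term is $\Oh_\p(n^{-1/2})$, which is absorbed into the stated rate.
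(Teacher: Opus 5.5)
Your Step~1 followed by Theorem~\ref{thm:CI-MAD} is exactly the paper's proof. That proof consists only of $\Ld(\iota[\bm Z],\Psi)\le\Kd(\iota[\bm Z],\Psi)=\Oh_\p(n^{-1/2})$ by DKW and an appeal to the theorem. It never mentions the event $\{\Delta_\MAD\ge\varepsilon_\MAD\}$ or $\Sigma_\MAD^\rnd$, so your Steps~2 and~3 add rigor rather than change the route.

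The weak point is the part of Step~2 you leave to a ``convention''. No argument can close it, because the claimed bound is false when $n^{-1}\|\bm Y\|_p^p\to\infty$. Take $Y_k\in\{-c_n,0,c_n\}$, each value on a third of the indices, with $c_n\to\infty$, and fix $M$ with $\p[|Z_1|>M]<1/24$.
\begin{itemize}
\item With probability tending to one, fewer than $n/12$ indices have $|Z_k|>M$.
\item Hence at most $n/3+n/12<n/2$ observations lie below $-\sigma M$, and likewise above $\sigma M$, which gives $|\med(\bm X)|\le\sigma M$.
\item At least $2n/3-n/12>n/2$ absolute deviations exceed $c_n-2\sigma M$.
\end{itemize}
Hence $\Sigma_\MAD\gtrsim c_n$, whereas $n^{-1/(p+1)}\|\bm Y\|_p^{p/(p+1)}\asymp c_n^{p/(p+1)}$, so the ratio diverges like $c_n^{1/(p+1)}$.

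The corollary therefore has to be read under the implicit hypothesis $\|\bm Y\|_p^p=\Oh_\p(n)$. The paper only uses it with $\Oh_\p(n^\alpha)$, $\alpha<1$, where your first regime in Step~2 already suffices. Under $\|\bm Y\|_p^p=\Oh_\p(n)$ your split closes without subsequences. The event $\{(\sigma\vee1)\Ld(\iota[\bm Z],\Psi)\ge\varepsilon_\MAD/2\}$ has probability tending to zero by DKW. Outside it, $\Delta_\MAD\ge\varepsilon_\MAD$ forces $R_n\ge\varepsilon_\MAD/2$, so it suffices that $\Sigma_\MAD^\Det$ is tight.

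Your crude bound needs a repair for this. The inequality $\med|\bm X|\le\sigma\med|\bm Z|+C(n^{-1}\|\bm Y\|_p^p)^{1/p}$ can fail for every $C$. The reason is that the indices where $|Z_k|\le\med|\bm Z|$ and those where $|Y_k|$ is small need not share $n/2$ elements. Count with slack instead: at least $7n/8$ indices have $|Z_k|\le\iota[|\bm Z|]^{-1}(7/8)$, and at most $n/8$ have $|Y_k|^p>8n^{-1}\|\bm Y\|_p^p$. This gives
\[
\med\big(|\bm X-\med(\bm X)|\big)\le 2\sigma\,\iota[|\bm Z|]^{-1}(7/8)+2\big(8n^{-1}\|\bm Y\|_p^p\big)^{1/p},
\]
which is tight in the i.i.d.\ case. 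Step~3 needs the same tightness, and with it Step~3 is fine.
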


\begin{rem}
\label{rem:MAD-iid-consistency}
As a consequence of Corollary~\ref{cor:MAD-iid}, we have that $\Sigma_\MAD^\Det$ is consistent whenever $\|\bm{Y}\|_p^p=\oh_\p(n)$.  Moreover, if $\|\bm{Y}\|_p^p=\Oh_\p(n^\alpha)$ for some $\alpha\in[0,1)$, then
\[
|\Sigma_\MAD - \sigma|
=
\Oh_\p\bigl(
n^{-\min\{(1-\alpha)/(p+1),1/2\}}
\bigr).
\]
\end{rem}

\begin{proof}[Proof of Corollary \ref{cor:MAD-iid}]
The L\'evy distance $\Ld(\iota[\bm{Z}],\Psi)$ is bounded by the Kolmogorov distance $\Kd(\iota[\bm{Z}],\Psi)$, which is $\Oh_\p(n^{-1/2})$ by the DKW inequality. Theorem~\ref{thm:CI-MAD} then gives the claim.
\end{proof}

\begin{cor}
\label{cor:mad_correlated}
Assume $\bm{Z}=\Psi^{-1}\circ\Phi(\bm{W})$, where $\bm{W}$ is a  stationary zero mean unit variance Gaussian process such that
\[
\lim_{k\to\infty}|\Cov[W_1,W_{k+1}]|=0,
\]
and suppose that~\eqref{eq:Dini-LB-F} holds. Set
\[
D_n \coloneqq \sum_{i=1}^n\sum_{j=1}^n |\Cov[W_i, W_j]|\ge n.
\]
Then
\begin{equation}
|\Sigma_\MAD - \sigma| 
= \Oh_{\p}\big(
n^{-1/(p+1)}\|\bm{Y}\|_p^{p/(p+1)}
+ n^{-1}\sqrt{D_n}
\big),
\end{equation}
as $n$ tends to infinity.
\end{cor}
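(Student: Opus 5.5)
The plan is to reduce everything to Theorem~\ref{thm:CI-MAD}, exactly as in the proof of Corollary~\ref{cor:MAD-iid}. The only new ingredient is a bound $\Ld(\iota[\bm Z],\Psi)=\Oh_\p(n^{-1}\sqrt{D_n})$ that replaces the DKW inequality. Since $\Ld\le\Kd$, it suffices to control the Kolmogorov distance. Write $\bm U=\Phi(\bm W)$, which has uniform marginals. Because $\Psi^{-1}$ is nondecreasing, $\{Z_k\le x\}=\{U_k\le \Psi(x)\}$ for every $x$ (this uses the generalized inverse identity $\Psi^{-1}(u)\le x\iff u\le\Psi(x)$). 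Hence
\[
\Kd(\iota[\bm Z],\Psi)\le \sup_{u\in[0,1]}\bigl|\iota[\bm U](u)-u\bigr|,
\]
and the problem reduces to a uniform empirical process of a dependent Gaussian-subordinated sequence.

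The pointwise variance is handled through the Hermite expansion of the indicator $\1_{\{\Phi(w)\le u\}}-u$. It has Hermite rank at least one, and its coefficients $c_q(u)$ satisfy $\sum_q q!\,c_q(u)^2=u(1-u)$. Mehler's formula then gives
\[
|\Cov[\1_{\{U_i\le u\}},\1_{\{U_j\le u\}}]|=\Bigl|\sum_{q\ge1}q!\,c_q(u)^2\rho_{ij}^q\Bigr|\le u(1-u)\,|\rho_{ij}|,
\]
with $\rho_{ij}=\Cov[W_i,W_j]$. Consequently $\E[(\iota[\bm U](u)-u)^2]\le n^{-2}u(1-u)D_n$, which presumably is the content of Proposition~\ref{prop:expected_dependent_discrepancy} announced earlier.

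The main obstacle is upgrading this pointwise bound to a bound on the supremum over $u$ without losing more than a constant. I would first try a bracketing/chaining argument in the spirit of Dehling--Taqqu: take a dyadic grid $u_{j,l}=l2^{-j}$ and write the supremum as a telescoping sum over increments $\iota[\bm U](u_{j,l+1})-\iota[\bm U](u_{j,l})-2^{-j}$. The Mehler bound applies equally to increments, since the indicator of $(a,b]$ again has Hermite rank at least one and variance at most $b-a$. Each increment therefore has second moment at most $2^{-j}D_n/n^2$. Summing $\E\max_l|\cdot|\le (2^{j}\cdot 2^{-j}D_n/n^2)^{1/2}$ over levels $j\le J$ gives a $J\sqrt{D_n}/n$ bound. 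The residual at level $J\approx\log_2 n$ is controlled by monotonicity, since $\iota[\bm U]$ and $u$ are both nondecreasing and the mesh is $2^{-J}$. This yields $\Oh_\p(\log n\cdot\sqrt{D_n}/n)$.

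To remove the logarithm I would use a fourth-moment chaining instead. This is the reason for assuming stationarity and $\rho(k)\to0$: these hypotheses give the hypercontractivity/diagram-formula bounds needed for the standard Dehling--Taqqu reduction principle, or, for $D_n$ linear, the short-range argument. If that fails, I would fall back on an $L^1$ statement: the L\'evy distance is bounded by $\int|\iota[\bm U](u)-u|\,\dd u$ up to a square root, and I would accept the logarithmic loss as the pragmatic fallback. Once $\Ld(\iota[\bm Z],\Psi)=\Oh_\p(n^{-1}\sqrt{D_n})$ is in hand, Theorem~\ref{thm:CI-MAD} applies on the event $\Delta_\MAD<\varepsilon_\MAD$, whose probability tends to one provided $\|\bm Y\|_p^p=\oh_\p(n)$ and $D_n=\oh(n^2)$. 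The latter follows from $\rho(k)\to0$ by Ces\`aro. Off that event, or if these rates fail, the claimed $\Oh_\p$ bound is trivially true because its right-hand side is not small. Finally, the randomized version is handled by Remark~\ref{rem:MAD-rnd-det}, which adds only an $\Oh_\p(n^{-1/2})\le\Oh_\p(n^{-1}\sqrt{D_n})$ term since $D_n\ge n$.
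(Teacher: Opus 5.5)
Your reduction is the same as the paper's: $\Ld\le\Kd$, transfer from $\bm Z$ to $\bm U=\Phi(\bm W)$ by monotonicity, Theorem~\ref{thm:CI-MAD} on the event $\Delta_\MAD<\varepsilon_\MAD$, and Remark~\ref{rem:MAD-rnd-det} together with $D_n\ge n$ for the randomized version. Your Mehler bound $\E[(\iota[\bm U](u)-u)^2]\le u(1-u)D_n/n^2$ is also correct.

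\textbf{The gap.} It lies in the step you yourself flag as the main obstacle, and that step is the only non-trivial content of the corollary: you never establish $\sup_u|\iota[\bm U](u)-u|=\Oh_\p(\sqrt{D_n}/n)$.

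The chaining you actually carry out uses second moments only. It has $2^j$ increments of variance at most $2^{-j}D_n/n^2$ on each of about $\log_2 n$ levels, so it yields $\Oh_\p(\log n\cdot\sqrt{D_n}/n)$, which does not prove the stated rate. Your fallbacks are weaker still: accepting the logarithm, or going through $\int|\iota[\bm U](u)-u|\,\dd u$ and a square root.

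The remedies you gesture at are not available under the corollary's hypotheses. The Dehling--Taqqu reduction principle is formulated for regularly varying long-memory covariances. Short-range empirical-process results assume summable covariances. Here only $\rho(k)\to0$ is assumed, which includes regimes covered by neither. Hypercontractivity also does not give a fourth-moment bound for free, because the indicator has components in every chaos. In particular, the diagonal terms of the fourfold sum do not scale like the squared variance.

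\textbf{The missing idea.} This is the content of Theorem~\ref{thm:Kol-Gauss-dependent}: split the Hermite expansion of $\1_{\{W_i\le x\}}-\Phi(x)$ at rank one.

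The rank-one part is $T_n(x)=-\Phi'(x)\,n^{-1}\sum_iW_i$, a bounded deterministic function times a single random variable. Its supremum therefore needs no chaining: $\sup_x|T_n(x)|\le(2\pi)^{-1/2}|n^{-1}\sum_iW_i|=\Oh_{L^2}(\sqrt{D_n}/n)$.

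The remainder $R_n$ has Hermite rank at least two. Tightness of $nD_n^{-1/2}R_n$ is proved through Billingsley's criterion on products of squared increments,
\[
\E\bigl[|R_n(y)-R_n(x)|^2|R_n(z)-R_n(y)|^2\bigr]\le C\frac{D_n^2}{n^4}\Bigl[\bigl(\Phi(z)-\Phi(x)\bigr)^{3/2}+\frac{\Phi(z)-\Phi(x)}{D_n}\Bigr].
\]
This bound is obtained by sorting the fourfold sum according to index coincidences. The distinct-index patterns are handled by the diagram bound of Lemma~4.5 in~\cite{MR471045} together with $n\sum_{k<n}\rho(k)^2\le 3D_n$; the remaining patterns are handled by Gebelein's inequality. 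The second term is then absorbed by working on the grid of mesh $1/n$, using $D_n\ge n$, and off-grid oscillation is controlled by monotonicity.

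The hypothesis $\rho(k)\to0$ enters exactly here. The diagram lemma needs all correlations below a fixed threshold $\varepsilon<1/3$. This is arranged by splitting the indices into residue classes modulo a fixed $q$ with $|\rho(k)|\le\varepsilon$ for $k\ge q$. None of this mechanism appears in your proposal.

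\textbf{A separate error.} Your assertion that the bound holds trivially when $\|\bm Y\|_p^p$ is not $\oh_\p(n)$ is not justified. $\Sigma_\MAD$ can grow like $(n^{-1}\|\bm Y\|_p^p)^{1/p}$, faster than the right-hand side. For example, take $Y_k\in\{-L_n,0,L_n\}$ on three equal blocks with $L_n\to\infty$: then $\Sigma_\MAD\asymp L_n$, while the right-hand side is of order $L_n^{p/(p+1)}$. The corollary therefore has to be read under $\|\bm Y\|_p^p=\oh_\p(n)$.
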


\begin{proof}
By Theorem~\ref{thm:CI-MAD},~\eqref{eq:MAD-rnd-det}, the inequality $\Ld\le\Kd$ and the fact that 
$$\Kd(F\circ h^{-1},G\circ h^{-1})\le \Kd(F,G)$$ 
for any monotone $h$ and distributions $F$ and $G$, it suffices to show that 
$$\Kd(\iota[\bm{W}],\Phi)=\Oh_\p(n^{-1}D_n^{1/2}).$$ 
However, this is precisely the content of Theorem~\ref{thm:Kol-Gauss-dependent} below, completing the proof.
\end{proof}

\subsubsection*{Comparison with the ordered-quantile estimators}

\noindent We now compare the guarantees for the estimators $\Sigma_r$ in Corollary~\ref{cor:CI_r12} with those for the MAD estimator in Theorem~\ref{thm:CI-MAD}. The MAD bound is less robust with respect to high signal activity: the signal contribution is controlled by the term $\bigl(n^{-1}\|\bm Y\|_p^p\bigr)^{1/(p+1)}$, rather than linearly by $n^{-1/p}\|\bm Y\|_p$, as for $\Sigma_r$. This discrepancy is also visible in the numerical experiments of the forthcoming Section~\ref{sec:numerical_experiments}.\\

\noindent This difference is consistent with the way the estimators use the ordered sample. The MAD is determined by two median operations, namely the empirical median and the median of the absolute deviations, while the estimators $\Sigma_r$ use the full collection of ordered observations. Thus, in the present framework, the latter estimators can distribute the effect of the signal over all sample quantiles, whereas the MAD remains more sensitive to local perturbations around the relevant medians. In particular, the estimators $\Sigma_r$ will generally lead to different estimates when using a single sample and different reference law $\Psi$ (because $\Psi$ is fully specified by its quantile function $\Psi^{-1}$), while the MAD estimator for different reference laws $\Psi$ may give the same result if these specific quantiles agree. For instance, we may take $\Psi=\Phi$ and modify its tails arbitrarily on $(-\infty,\Phi^{-1}(1/4))\cup(\Phi^{-1}(3/4),\infty)$ without affecting the MAD estimator.

\section{Functionals of weights and proxies}
\label{sec:weighted-proxies}

\noindent In this section we study the asymptotic behaviour of the deterministic quantities involving $(\bm w,\bm\psi)$ that appear in Corollary~\ref{cor:CI_r12}. In particular, we control the normalizing terms involving the weights and the empirical-to-proxy discrepancy
\[
\langle \bm w,|\bm Z^\ua-\bm\psi|^r\rangle.
\]
The analysis uses elementary Riemann and Poisson-sum approximations, together with standard weak limits for empirical and quantile processes.

\subsection{Riemann and Poisson sums}
\label{subsec:Riemann-Poisson-sums}

\noindent We begin with two elementary approximation results for Riemann and mean Poisson sums. They will be used to control the normalizing quantities associated with the deterministic and random proxies. Before stating them, we recall the notation and endpoint regularity assumptions used below.\\ 

\subsubsection*{Notation and conventions}

\noindent We begin with two elementary approximation results for Riemann and mean Poisson sums. They will be used to control the normalizing quantities associated with the deterministic and random proxies. Before stating them, we recall the notation and endpoint regularity assumptions used below.\\

\noindent In the sequel, we will write $f\sim g$ as $x$ tends to $a$, for some real number $a$,  if $f(x)/g(x)$ tends to one as $x$ tends to $a$. A positive measurable function $f$ is said to be regularly varying at $a\in\{0,\infty\}$ with index $\rho\in\R$, denoted by $f\in\RV_a^\rho$, if $f(\lambda t)/f(t)\to \lambda^\rho$ as $t\to a$, for every $\lambda>0$. Similarly, we say that $f:(0,1)\to\R$ varies regularly at $1$ if the mapping $t\mapsto f(1-t)$ varies regularly at $0$. If $\rho=0$, we say that $f\in\RV_a^0$ is slowly varying.\\

\noindent Recall from the Lebesgue-Vitali theorem that a function $f:D\to\R$ is locally Riemann integrable, i.e., Riemann integrable on compact subintervals of $D$, if and only if $f$ is locally bounded and the set of discontinuities of $f$ has zero Lebesgue measure. In particular, all monotone functions and all c\`adl\`ag functions are locally Riemann integrable, since they are locally bounded and have at most countably many discontinuities. Despite this terminology, all integrals below are understood in the Lebesgue sense.

\subsubsection*{Riemann and Poisson approximations}
\noindent We next record two approximation results adapted to the deterministic and random proxies, respectively. The assumptions allow for the endpoint singularities naturally produced by quantile functions.

\begin{lemma}
\label{lem:Riemann-sums}
Let $\{f_n\}_{n\in\N}$ be measurable functions on $(0,1)$ converging to a locally Riemann integrable function $f:(0,1)\to\R$ uniformly on compact subsets of $(0,1)$. Suppose that for a Lebesgue integrable function $g:(0,1)\to[0,\infty)$ we have $|f_n| \leqslant g$ on $(0,1)$ for all $n\in\N$. If $g \in \RV_0^{-\alpha_0}$ and $g(1-\cdot) \in \RV_0^{-\alpha_1}$ for some $\alpha_0, \alpha_1 \le 1$, then
\[
\lim_{n\to\infty} \frac{1}{n} \sum_{k=1}^n f_n\left(\frac{k}{n+1}\right) 
= \int_0^1 f(x)\dd x.
\]
\end{lemma}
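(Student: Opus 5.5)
The plan is to split the sum and the integral into a compact middle part and two boundary layers. The middle part is handled by the uniform convergence of $f_n$ together with the Riemann integrability of $f$. The boundary layers are made uniformly small in $n$ by comparing the discrete sums of the dominating function $g$ with its integral, which is where the regular variation is used. Fix $\ve\in(0,1/4)$ and write $x_k\coloneqq k/(n+1)$. Decompose
\[
\frac1n\sum_{k=1}^n f_n(x_k)=\frac1n\sum_{x_k<\ve}f_n(x_k)+\frac1n\sum_{\ve\le x_k\le 1-\ve}f_n(x_k)+\frac1n\sum_{x_k>1-\ve}f_n(x_k),
\]
and split $\int_0^1 f$ in the same way over $(0,\ve)$, $[\ve,1-\ve]$ and $(1-\ve,1)$.

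\textbf{Middle part.} On $[\ve,1-\ve]$ we have $\sup_{[\ve,1-\ve]}|f_n-f|\to0$, so replacing $f_n$ by $f$ there costs at most $\sup_{[\ve,1-\ve]}|f_n-f|\to0$. The function $f$ is Riemann integrable on $[\ve,1-\ve]$, and the points $x_k$ form a grid of mesh $1/(n+1)$. Hence $\frac1{n+1}\sum_{\ve\le x_k\le1-\ve}f(x_k)\to\int_\ve^{1-\ve}f$. Here the endpoint effects are at most two terms of size $\sup_{[\ve,1-\ve]}|f|/(n+1)$. Finally, the factor $(n+1)/n\to1$ is harmless.

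\textbf{Boundary layers.} Since $|f_n|\le g$ and $|f|\le g$ (the latter by taking the pointwise limit), it suffices to show that
\[
\limsup_{n}\frac1n\sum_{x_k<\ve}g(x_k)\le C\int_0^{2\ve}g
\]
for a constant $C$ independent of $\ve$, and the symmetric statement near $1$. Since $g$ is integrable, the right-hand side then tends to $0$ as $\ve\downarrow0$. This step is the main obstacle, because the sum is evaluated at single points while the integral only controls averages, and $g$ may blow up at $0$. I would use the uniform convergence theorem for regularly varying functions: $g(\lambda x)/g(x)\to\lambda^{-\alpha_0}$ uniformly in $\lambda\in[1/2,1]$ as $x\to0$. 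Shrinking $\ve$ if necessary, this gives a constant $c>0$ with $g(t)\ge c\,g(x)$ for all $t\in[x/2,x]$ and $x\le 2\ve$. Integrating over $[x_k/2,x_k]$ yields
\[
\frac{g(x_k)}{n+1}\le \frac{2}{c\,k}\int_{x_k/2}^{x_k}g(t)\,\dd t .
\]

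It remains to sum this over $k$ with $x_k<\ve$ and control the overlaps. A point $t$ belongs to $[x_k/2,x_k]$ exactly when $t(n+1)\le k\le 2t(n+1)$. Therefore
\[
\sum_k \frac1k\1_{\{x_k/2\le t\le x_k\}}\le \sum_{t(n+1)\le k\le 2t(n+1)}\frac1k\le 1+\log 2 ,
\]
uniformly in $t$ and $n$. Consequently $\frac1{n+1}\sum_{x_k<\ve}g(x_k)\le \frac{2(1+\log2)}{c}\int_0^{\ve}g$, as required. The same argument applied to $g(1-\cdot)\in\RV_0^{-\alpha_1}$ handles the layer near $1$. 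Notice that only integrability and local regularity of $g$ are really used; the condition $\alpha_0,\alpha_1\le1$ is automatically compatible with integrability. Also, $\int_0^\ve|f|+\int_{1-\ve}^1|f|\le\int_0^\ve g+\int_{1-\ve}^1 g\to0$.

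\textbf{Conclusion.} Combining the three parts, the $\limsup$ of $\bigl|\frac1n\sum_k f_n(x_k)-\int_0^1f\bigr|$ is bounded by $C'\bigl(\int_0^{2\ve}g+\int_{1-2\ve}^1g\bigr)$. Letting $\ve\downarrow0$ finishes the proof.
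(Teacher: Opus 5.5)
Your proof is correct, and it differs from the paper's only in how the boundary layers are controlled. The bulk argument, uniform convergence on $[\ve,1-\ve]$ together with Riemann integrability of $f$, is the same as in the paper.

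\textbf{The paper's route.} It reduces by symmetry to $(0,1/2]$ and, via \cite[Thm.~1.5.3]{MR1015093}, replaces $g$ by a non-increasing integrable majorant. Monotonicity then gives at once $g\bigl(k/(n+1)\bigr)\le (n+1)\int_{(k-1)/(n+1)}^{k/(n+1)}g(x)\,\dd x$. Hence the boundary sum is at most $\frac{n+1}{n}\int_0^\delta g(x)\,\dd x$.

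\textbf{Your route.} You never monotonize $g$. The uniform convergence theorem gives $g(t)\ge c\,g(x)$ for $t\in[x/2,x]$, so you dominate $g(x_k)$ by its average over $[x_k/2,x_k]$. You then control the overlaps of these intervals with the harmonic-sum bound $1+\log 2$.

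\textbf{What each approach buys.} The paper's argument is shorter, with constant essentially $1$, once a monotone majorant is available. Yours costs the constant $2(1+\log 2)/c$ and some overlap bookkeeping. In exchange, it uses only a doubling-type comparability of $g$ near the endpoints and treats every index $\alpha_0\le 1$ the same way. It also avoids the monotone-majorant step, which \cite[Thm.~1.5.3]{MR1015093} supplies directly only when $\alpha_0>0$. For $\alpha_0\le 0$ one would instead fall back on a Potter-type bound such as $g(x)\le Cx^{-1/2}$ near $0$.
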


\begin{lemma}
\label{lem:Poisson-sums}
Let $\{f_n\}_{n\in\N}$ be measurable functions converging a.e. to a Lebesgue integrable $f:(0,1)\to\R$, and suppose that $|f_n|\le g$ for some Lebesgue integrable $g$. Consider i.i.d. standard exponential variables $\{E_k;\ k\in\br{n}\}$, set $\Gamma_k\coloneqq\sum_{j=1}^k E_j$ and $V_{n,k}\coloneqq\Gamma_k/\Gamma_{n+1}$. Then
\[
\lim_{n\to\infty}\frac{1}{n}\sum_{k=1}^n\E[f_n(V_{n,k})]
=\int_0^1 f(x)\dd x.
\]
\end{lemma}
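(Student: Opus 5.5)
We prove Lemma~\ref{lem:Poisson-sums} by reducing the averaged Poisson sum to an integral against an explicit density. It is classical that $(V_{n,1},\dots,V_{n,n})$ has the law of the uniform order statistics $(U_1^\ua,\dots,U_n^\ua)$. In particular $V_{n,k}\sim\Beta(k,n-k+1)$, with density $b_{n,k}(x)=\frac{n!}{(k-1)!(n-k)!}x^{k-1}(1-x)^{n-k}$. The key identity is
\[
\sum_{k=1}^n b_{n,k}(x)=n\sum_{j=0}^{n-1}\binom{n-1}{j}x^j(1-x)^{n-1-j}=n,
\qquad x\in(0,1).
\]
Consequently
\[
\frac1n\sum_{k=1}^n\E[f_n(V_{n,k})]=\frac1n\int_0^1 f_n(x)\sum_{k=1}^n b_{n,k}(x)\,\dd x=\int_0^1 f_n(x)\,\dd x .
\]
Fubini (or just finite linearity) is justified because $|f_n|\le g$ with $g$ integrable, so each expectation is finite.

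It remains to show that $\int_0^1 f_n\to\int_0^1 f$. This is immediate from the dominated convergence theorem, since $f_n\to f$ a.e.\ and $|f_n|\le g\in L^1(0,1)$.

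The only genuinely substantive step is the mixture identity $\frac1n\sum_k b_{n,k}\equiv 1$. It says that picking a uniformly random index $k$ and then the $k$-th uniform order statistic yields a uniform variable. We can verify it either by the binomial theorem, as above, or probabilistically: $\frac1n\sum_k\Pb[U^\ua_k\in A]=\E\bigl[\frac1n\sum_k\1_{\{U_k\in A\}}\bigr]=|A|$. The probabilistic route avoids computing Beta densities altogether.

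Unlike Lemma~\ref{lem:Riemann-sums}, no regular variation hypotheses on $g$ are needed here. The averaging over the Beta laws smooths out endpoint singularities exactly, with no discretization error. I therefore expect no real obstacle beyond recalling the representation of $\Gamma_k/\Gamma_{n+1}$ as uniform order statistics.
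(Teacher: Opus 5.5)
Your proof is correct and is essentially the paper's argument: the identity $\sum_{k=1}^n b_{n,k}(x)=n\sum_{j=1}^n\binom{n-1}{j-1}x^{j-1}(1-x)^{n-j}=n$ turns the averaged sum into $\int_0^1 f_n(x)\,\dd x$, and dominated convergence finishes. Your probabilistic reading of that identity (a uniformly chosen uniform order statistic is itself uniform) is an equivalent justification of the same step.
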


\noindent To simplify the exposition, the proofs of Lemmas~\ref{lem:Riemann-sums} and~\ref{lem:Poisson-sums} are relegated to Appendix~\ref{app:Riemann-Poisson} below. We now apply these lemmas to the weight and proxy functionals appearing in Theorem~\ref{thm:CI_r12}, and in particular to the asymptotic coefficients in Corollary~\ref{cor:CI_r12}. When the weights are generated from the proxies, these quantities become Riemann-type averages along either the deterministic quantile grid or its random analogue. The two lemmas allow us to replace these averages by the corresponding integrals under the reference law $\Psi$.

\begin{cor}
\label{cor:weighted-proxies}
Fix $r,q\ge 1$, and let $\omega:\R\to[0,\infty)$ be a continuous bounded function with 
$$\sup_{x\in\R}|x|^r\omega(x)<\infty.$$
Set $\bm{w}\coloneqq\omega(\bm\psi)$ and let $Z\sim \Psi$. If $\bm\psi$ is constructed via~\eqref{eq:proxies_deterministic} (resp.~\eqref{eq:proxies_random}), then the following deterministic (resp. $L^1$) limits hold as $n$ tends to infinity
\[
\frac{\|\bm{w}\|_q^q}{n}
\to\|\omega(Z)\|_{L^q}^q,
\qquad 
\frac{\|\bm{w}\pdot\bm\psi\|_q^q}{n}
\to\|Z\omega(Z)\|_{L^q}^q,
\qquad
\frac{\langle\bm{w},|\bm\psi|^r\rangle}{n}
\to\|Z^r\omega(Z)\|_{L^1}.
\]
\end{cor}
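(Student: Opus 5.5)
All three limits have the common form $n^{-1}\sum_{k=1}^n h(\psi_k)$ for a continuous function $h$ that is bounded on $\R$. We take $h=\omega^q$, $h=|x|^q\omega^q$ and $h=|x|^r\omega$ respectively, with $Z\sim\Psi$. The plan is to rewrite each sum as an average of $f=h\circ\Psi^{-1}$ over the deterministic grid $k/(n+1)$ or over uniform order statistics. We then apply Lemma~\ref{lem:Riemann-sums} or Lemma~\ref{lem:Poisson-sums}, and identify the limit through $\int_0^1 h(\Psi^{-1}(u))\,\dd u=\E[h(Z)]$, which is the quantile transform.

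\textbf{Boundedness of $h$.} We check that each $h$ is bounded.
\begin{itemize}
\item $\omega^q$ is bounded because $\omega$ is.
\item $|x|^r\omega(x)$ is bounded by hypothesis.
\item For the middle functional, $|x|\omega(x)\le \max\{\|\omega\|_\infty,\ \sup_x|x|^r\omega(x)\}$, splitting according to $|x|\le1$ or $|x|>1$ and using $r\ge1$. Hence $|x|^q\omega(x)^q$ is bounded.
\end{itemize}
So in every case $f=h\circ\Psi^{-1}$ is bounded on $(0,1)$.

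\textbf{Deterministic proxy.} Here $n^{-1}\sum_k h(\psi^\Det_k)=n^{-1}\sum_k f(k/(n+1))$. We apply Lemma~\ref{lem:Riemann-sums} with $f_n=f$ and the constant dominating function $g\equiv\|h\|_\infty$. This $g$ belongs to $\RV_0^0$ at both endpoints, and $0\le1$, so the endpoint conditions hold.

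It remains to check that $f$ is locally Riemann integrable. $f$ is bounded, and $\Psi^{-1}$ is monotone, so it has only countably many discontinuities. Since $h$ is continuous, $h\circ\Psi^{-1}$ is continuous wherever $\Psi^{-1}$ is. The Lebesgue--Vitali criterion recalled above therefore applies.

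\textbf{Random proxy.} We use $\bm\psi^\rnd=\Psi^{-1}(\bm U^\ua)$ together with the Rényi representation $\bm U^\ua\eqd(V_{n,k})_{k}$, where $V_{n,k}=\Gamma_k/\Gamma_{n+1}$. This gives
\[
\E\Big[n^{-1}\sum_k h(\psi^\rnd_k)\Big]=n^{-1}\sum_k\E[f(V_{n,k})],
\]
and Lemma~\ref{lem:Poisson-sums} with $f_n=f$ and $g$ constant yields convergence of the means to $\E[h(Z)]$.

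Convergence of the means is not yet convergence in $L^1$. The key observation is that the sum is symmetric in the coordinates, so
\[
n^{-1}\sum_k h(\psi_k^\rnd)=n^{-1}\sum_k h(\xi_k)
\]
with $\xi_k$ i.i.d. with law $\Psi$ and $h$ bounded. The weak law of large numbers then gives convergence in $L^2$, hence in $L^1$, to $\E[h(Z)]$, by a direct variance computation: the variance is at most $\|h\|_\infty^2/n$. This argument even bypasses Lemma~\ref{lem:Poisson-sums}, which we keep only as a consistency check.

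\textbf{Main obstacle.} The delicate point is the local Riemann integrability of $h\circ\Psi^{-1}$ in the deterministic case. If $\Psi$ has atoms or flat pieces, $\Psi^{-1}$ jumps, and we must make sure that composing with a merely continuous $h$ keeps the discontinuity set countable. This holds by the argument above. Everything else reduces to boundedness.
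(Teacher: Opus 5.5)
Your proof is correct. For the deterministic proxy it is the paper's argument via Lemma~\ref{lem:Riemann-sums}, except that you also verify that lemma's hypotheses, which the paper leaves unchecked: boundedness of $h$, local Riemann integrability of $h\circ\Psi^{-1}$, and a constant dominating function. Local Riemann integrability follows because the monotone map $\Psi^{-1}$ has only countably many discontinuities. The dominating function should be strictly positive to be regularly varying, e.g.\ $g\equiv\|h\|_\infty+1$.

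For the random proxy you take a genuinely different route. The paper invokes Scheff\'e's lemma, which needs convergence in probability together with convergence of the means. It obtains only the convergence of the means from Lemma~\ref{lem:Poisson-sums}. The proof of that lemma shows $\frac1n\sum_k\E[f(V_{n,k})]=\int_0^1 f(x)\,\dd x$ exactly, which is the averaged form of your symmetry observation. The convergence in probability is left implicit and would itself come from a law of large numbers.

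Your remark that $\sum_k h(\psi^\rnd_k)=\sum_k h(\xi_k)$ is a symmetric function of the i.i.d.\ sample $\bm\xi$ gives both at once. The mean equals $\E[h(Z)]$ exactly for every $n$, and the variance is at most $\|h\|_\infty^2/n$. So you get $L^2$ convergence with an explicit $\Oh(n^{-1/2})$ rate, without Scheff\'e's lemma or the R\'enyi/Beta representation.

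The paper's route treats both proxies in parallel through Riemann and Poisson sums. Lemma~\ref{lem:Poisson-sums} also allows $n$-dependent, merely integrable integrands. For the bounded $h$ arising here, your argument is shorter and gives a stronger conclusion. For integrable but unbounded $h$, the $L^1$ law of large numbers would play the same role as your variance bound.
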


\begin{proof}
The statements for the deterministic proxy follow directly from Lemma~\ref{lem:Riemann-sums}. Consider now the case of the random proxy. By Scheff\'e's lemma, it suffices to show convergence in probability and in mean. Let $\bm{U}$ be as before, so that $\bm\xi^\ua=\Psi^{-1}(\bm{U}^\ua)$, where $U^\ua_k\sim\Beta(k,n-k+1)$ for $k\in\br{n}$. Hence, the result follows from Lemma~\ref{lem:Poisson-sums}.
\end{proof}


\subsection{Empirical-to-proxy discrepancy}
\label{sec:proxy_control}

The concentration inequality established in Corollary~\ref{cor:CI_r12} decouples the estimation error into two algebraically independent components. The objective of the present section is to control the weighted distance $\langle\bm{w},|\bm{Z}^\ua - \bm\psi|^r\rangle$ for $r=1,2$.\\

\noindent We first relate the random and deterministic proxy discrepancies. The elementary inequality
\[
(a+b)^r\le 2^{r-1}(a^r+b^r),
\]
valid for $a,b\ge 0$ and $ r\ge 1$, implies that
\[
\langle\bm{w},|\bm{Z}^\ua - \bm\psi^\rnd|^r\rangle
\le
2^{r-1}
\big[
\langle\bm{w},|\bm{Z}^\ua - \bm\psi^\Det|^r\rangle
+
\langle\bm{w},|\bm\psi^\rnd - \bm\psi^\Det|^r\rangle
\big],
\]
where $\bm\psi^\Det$ and $\bm\psi^\rnd$ are the proxies defined in
\eqref{eq:proxies_deterministic} and \eqref{eq:proxies_random}, respectively.
Both weighted empirical-to-proxy discrepancies are of similar natures (although the random proxy case originates from i.i.d. samples and the weights are a function of $\bm\psi^\rnd$), most of the analysis of this section is essentially reduced to the case of the deterministic proxy $\bm\psi^\Det$ in~\eqref{eq:proxies_deterministic}. Indeed, the case of random proxies essentially follows from the deterministic case, by virtue of the fast convergence of $\bm\psi^\rnd$ to its quantile function in the sense of Donsker (see also~\cite{MR815960} and~\cite[Ch.~18]{MR3396731}).\\

\noindent We first examine the case where the latent noise variables $\bm{Z}$ are i.i.d., deriving exact asymptotic integrals for the weighted distance. Subsequently, we find a stabilizing weight function $\omega$. Finally, we relax the independence assumption, extending our controls to complex, highly correlated sample regimes.

\subsubsection{Empirical-to-proxy discrepancy for i.i.d. samples}
\label{subsec:iid-empirical-proxy}

Throughout this subsection, we assume that the latent noise variables
$\bm Z=\{Z_k;\,k\in\br n\}$ are i.i.d. with distribution function $\Psi$. We also assume that $\Psi$ is absolutely continuous, with a continuous density $\Psi'$ which is strictly positive in the interior of the support of $\Psi$.  Consider the weights 
\begin{equation}
\label{eq:proxy_dependent_weights}
\bm w\coloneqq \omega(\bm\psi),
\qquad
w_k\coloneqq \omega(\psi_k),
\end{equation}
for $k\in\br n$, where $\omega:\R\to[0,\infty)$ is bounded and continuous. The size of the empirical-to-proxy discrepancy is governed by two competing effects. The fluctuations of the uniform order statistics vanish near the endpoints, while the derivative of the quantile map,
\[
\eta\coloneqq(\Psi^{-1})'
=
1/\Psi'\circ\Psi^{-1},
\]
may explode there. The next result quantifies this balance for the deterministic proxy.\\

\noindent Let $r\ge 1$ and let $\kappa:(0,1)\to[1,\infty)$ be a measurable function satisfying
\[
\lim_{u\da 0}
\frac{\min\{\kappa(u),\kappa(1-u)\}}
{\sqrt{\log\log(1/u)}}
=
\infty.
\]
We say that the pair $(\omega,\Psi)$ satisfies condition $\mathbf{H}_{\kappa,r}$ if the following assumptions hold.

\medskip

\noindent\textbf{Condition $(\mathbf{H}_{\kappa,r})$.}
\begin{itemize}[leftmargin=2em]
\item[-] The distribution function $\Psi$ is absolutely continuous, with a continuous density $\Psi'$ which is strictly positive in the interior of the support of $\Psi$.

\item[-] Either $\omega\circ\Psi^{-1}$ is supported on a compact subset of $(0,1)$, or the function
\[
\eta\coloneqq\frac{1}{\Psi'\circ\Psi^{-1}}
\]
varies regularly at both endpoints $0$ and $1$.

\item[-] The function
\[
u
\longmapsto
\omega(\Psi^{-1}(u))
\bigl[
\kappa(u)\eta(u)\sqrt{u(1-u)}
\bigr]^r
\]
is bounded on $(0,1)$ by a Lebesgue integrable function which varies regularly at both endpoints $0$ and $1$.
\end{itemize}

\begin{thm}[Weak limit of the empirical-to-proxy discrepancy]
\label{thm:weak_lim_proxy_det}
Let the notation of Sections~\ref{subsec:proxies} and~\ref{subsec:weighted-Lr-loss} prevail. Assume that $\bm\psi=\bm\psi^\Det$ and that $\bm Z=\{Z_k;\,k\in\br n\}$ is i.i.d. with distribution function $\Psi$. If the pair $(\omega,\Psi)$ satisfies condition $(\mathbf{H}_{\kappa,r})$, then
\begin{equation}
\label{eq:weak_lim_proxy_det}
n^{\frac{r}{2}-1}
\langle\bm{w},|\bm{Z}^\ua-\bm\psi|^r\rangle
\cid
\int_0^1
\omega(\Psi^{-1}(u))
\eta(u)^r
|\beta_u|^r
\dd u,
\end{equation}
as $n\to\infty$, where $\beta$ is a standard Brownian bridge on $[0,1]$.
\end{thm}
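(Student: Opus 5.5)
We realise the problem on the uniform scale and then transfer it through the quantile map. Write $\bm Z^\ua=\Psi^{-1}(\bm U^\ua)$ with $\bm U$ i.i.d.\ uniform, and set $u_k=k/(n+1)$, so that $\psi_k=\Psi^{-1}(u_k)$. Consider the rescaled uniform quantile process
\[
\beta_n(u)\coloneqq\sqrt n\,\bigl(U^\ua_{\lceil (n+1)u\rceil}-u\bigr).
\]
By the mean value theorem, $\sqrt n\,(Z^\ua_k-\psi_k)=\eta(\theta_{n,k})\,\beta_n(u_k)$ for some $\theta_{n,k}$ between $U^\ua_k$ and $u_k$. Hence
\[
n^{\frac r2-1}\langle\bm w,|\bm Z^\ua-\bm\psi|^r\rangle=\frac1n\sum_{k=1}^n\omega(\Psi^{-1}(u_k))\,\eta(\theta_{n,k})^r\,|\beta_n(u_k)|^r,
\]
which is a Riemann sum of the random function $f_n(u)=\omega(\Psi^{-1}(u))\,\eta(\theta_n(u))^r|\beta_n(u)|^r$. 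The natural route is to pass to a Skorokhod (or KMT) construction in which $\beta_n\to\beta$ almost surely, uniformly on $[0,1]$, and then apply Lemma~\ref{lem:Riemann-sums} pathwise. On compact subsets of $(0,1)$ we have $\theta_n\to u$ uniformly, and $\eta$ is continuous there because $\Psi'$ is continuous and positive on the interior of the support. So $f_n\to\omega(\Psi^{-1})\eta^r|\beta|^r$ uniformly on compacts, and the limit is locally Riemann integrable.

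The key step, which we expect to be the main obstacle, is producing the dominating function $g$ required by Lemma~\ref{lem:Riemann-sums}, with a bound that holds eventually, almost surely (or with probability tending to one). We need two ingredients near the endpoints.

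\begin{itemize}
\item[(a)] A law-of-the-iterated-logarithm type envelope for the uniform quantile process: $|\beta_n(u)|\le C\,\kappa(u)\sqrt{u(1-u)}$ uniformly in $u\in[1/(n+1),n/(n+1)]$ for all large $n$, with probability close to one. This uses the Csörgő–Révész weighted bounds, and it is exactly why $\kappa$ must dominate $\sqrt{\log\log(1/u)}$.
\item[(b)] A ratio bound $\eta(\theta_{n,k})\le C\,\eta(u_k)$. This follows from regular variation of $\eta$ at the endpoints via Potter's bounds, together with the ratio control $U^\ua_k/u_k\in[c,C]$ uniformly in $k$ with high probability (Wellner's ratio bounds for uniform order statistics).
\end{itemize}

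In the alternative case where $\omega\circ\Psi^{-1}$ has compact support, (b) is unnecessary because only a compact range of $u$ matters. Combining (a) and (b), $|f_n|\le C\,\omega(\Psi^{-1}(u))[\kappa(u)\eta(u)\sqrt{u(1-u)}]^r$, which is bounded by the integrable regularly varying function supplied by $(\mathbf H_{\kappa,r})$. Its regular-variation indices must be $\le1$ by integrability, which is what the lemma needs.

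On the event of probability at least $1-\varepsilon$ where (a) and (b) hold, Lemma~\ref{lem:Riemann-sums} gives convergence of the Riemann sum to $\int_0^1\omega(\Psi^{-1}(u))\eta(u)^r|\beta(u)|^r\,\dd u$. Two points need care.
\begin{itemize}
\item The lemma is stated for the grid $k/(n+1)$ with $f_n$ evaluated there, which matches our setting; the piecewise-constant extension of $\beta_n$ only matters at grid points.
\item Letting $\varepsilon\downarrow0$ yields convergence in probability in the coupled space, and therefore convergence in distribution of the original statistic.
\end{itemize}
Finally, the limit is almost surely finite: $\E|\beta_u|^r\lesssim(u(1-u))^{r/2}$ and the integrability hypothesis (using $\kappa\ge1$) ensure this, so the limit in \eqref{eq:weak_lim_proxy_det} is a proper random variable.
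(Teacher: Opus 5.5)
Your ingredients are the same as the paper's: the quantile representation, the mean value theorem, a weighted approximation of the uniform quantile process with an LIL-type envelope, Wellner's ratio bounds combined with Potter's bounds, and Lemma~\ref{lem:Riemann-sums}. The gap is in how you combine them at the last step.

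\textbf{Where the argument fails.} Both (a) and (b) are \emph{per-$n$} statements. For each $n$ there is an event $\mathcal E_{n,\ve}$ with $\p[\mathcal E_{n,\ve}]\ge1-\ve$ on which they hold. Equivalently, $M_n\coloneqq\sup_{1/(n+1)\le u\le n/(n+1)}|\beta_n(u)|/(\kappa(u)\sqrt{u(1-u)})=\Oh_\p(1)$, and the ratios $U^\ua_k/u_k$ and $(1-U^\ua_k)/(1-u_k)$ are bounded above and below in probability, uniformly in $k$. A pathwise use of Lemma~\ref{lem:Riemann-sums} needs more: $|f_n(\omega,\cdot)|\le C(\omega)g$ for all large $n$ at a \emph{fixed} $\omega$, i.e.\ an $n$-independent event. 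That event is not available.
\begin{itemize}
\item Take any coupling where the $U_i$ form one i.i.d.\ sequence, as in the KMT or Cs\"org\H{o}--R\'ev\'esz constructions. Since $\sum_n1/(n\log n)=\infty$, Borel--Cantelli gives $U_n\le1/(n\log n)$ infinitely often. Hence $(n+1)U^\ua_1\to0$ along a subsequence almost surely, and the lower ratio bound in (b) fails infinitely often.
\item This genuinely destroys the domination. Take a stable- or Pareto-type lower tail $\Psi^{-1}(u)\sim-cu^{-1/\beta}$ with $\beta<2$ (the setting of Corollary~\ref{cor:stable}) and $\kappa(u)\asymp\sqrt{\log(1/u)}$. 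Then $\sqrt n|Z^\ua_1-\psi_1|/[\eta(u_1)\kappa(u_1)\sqrt{u_1}]\gtrsim(\log n)^{1/\beta-1/2}\to\infty$ along that subsequence.
\item A Skorokhod coupling changes the joint law across $n$, and nothing in your argument produces uniform-in-$n$ bounds there either.
\item KMT gives $n$-dependent bridges $B_n$, not a.s.\ convergence to one fixed $\beta$. So the two constructions are not interchangeable for a pathwise argument.
\end{itemize}
So the step ``on the event of probability at least $1-\varepsilon$ where (a) and (b) hold, the lemma gives convergence'' is not justified.

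\textbf{The missing idea.} Control the boundary in probability, uniformly in $n$, rather than pathwise; this is what the paper does. Let $S_n$ be the statistic and $S_n^\delta$ the part of the sum with $u_k\in[\delta,1-\delta]$.
\begin{itemize}
\item On $[\delta,1-\delta]$ no domination is needed. Uniform convergence and the lemma give $S_n^\delta\to S^\delta\coloneqq\int_\delta^{1-\delta}\omega(\Psi^{-1}(u))\eta(u)^r|\beta_u|^r\dd u$.
\item For the boundary, use (a) and (b) only for each fixed $n$. On $\mathcal E_{n,\ve}$ you get $0\le S_n-S_n^\delta\le C_\ve^rM_n^rT_n^\delta$. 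Here $T_n^\delta\coloneqq\frac1n\sum_{u_k\notin[\delta,1-\delta]}g(u_k)$ is deterministic, and $\limsup_nT_n^\delta\le\int_{(0,\delta)\cup(1-\delta,1)}g$ by the argument in the proof of the lemma.
\item Hence $\lim_{\delta\da0}\limsup_n\p[|S_n-S_n^\delta|>\ve']=0$ for every $\ve'>0$. Together with $S^\delta\ua S$, Billingsley's approximation theorem gives \eqref{eq:weak_lim_proxy_det}.
\end{itemize}
Alternatively, you can keep your pathwise scheme. Apply Skorokhod along sub-subsequences to the tight tuple formed by $\beta_n$, $M_n$ and the ratio constants; along each such sub-subsequence the constants are then a.s.\ bounded.
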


\begin{rem}
\label{rem:Riemann-sums}
By~\cite{Denisov2006}, if $f:(0,1]\to[0,\infty)$ is eventually monotone at $0$ and Lebesgue-integrable on $(0,1]$, then there exists some $g\in\RV_0^{-1}$ with $f\le g$ and $\int_0^1 g(x)\dd x<\infty$.
\end{rem}

Under a slightly stronger domination assumption, we may establish the corresponding result for the random proxy.

\begin{thm}
\label{thm:weak_lim_proxy_rand}
Let the notation of Sections~\ref{subsec:proxies} and~\ref{subsec:weighted-Lr-loss} prevail. Assume that $\bm\psi=\bm\psi^\rnd$ and that $\bm Z=\{Z_k;\,k\in\br n\}$ is i.i.d. with distribution function $\Psi$. Let $\bm\xi=\{\xi_k;\,k\in\br n\}$ be the auxiliary i.i.d. sample used to generate $\bm\psi^\rnd$, independent of $\bm Z$. Suppose that $(\omega,\Psi)$ satisfies condition $\mathbf{H}_{\kappa,r}$.

Assume moreover that there exists a function $\varpi:(0,1)\to[0,\infty)$ such that
\[
\omega\circ\Psi^{-1}\le \varpi
\]
on $(0,1)$, where $\varpi$ is either supported on a compact subset of $(0,1)$ or varies regularly at both endpoints $0$ and $1$. Suppose also that the function
\[
u
\longmapsto
\varpi(u)
\bigl[
\kappa(u)\eta(u)\sqrt{u(1-u)}
\bigr]^r
\]
is bounded on $(0,1)$ by a Lebesgue integrable function which varies regularly at both endpoints $0$ and $1$. Then
\begin{equation}
\label{eq:weak_lim_proxy_rand}
n^{\frac{r}{2}-1}
\langle\omega(\bm\psi^\rnd),|\bm{Z}^\ua-\bm\psi^\rnd|^r\rangle
\cid
2^{r/2}
\int_0^1
\omega(\Psi^{-1}(u))
\eta(u)^r
|\beta_u|^r
\dd u,
\end{equation}
as $n$ tends to infinity, where $\beta$ is a standard Brownian bridge on $[0,1]$.
\end{thm}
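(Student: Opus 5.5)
We reduce the random-proxy statement to a joint weak limit of two independent quantile processes. We write $\bm Z^\ua=\Psi^{-1}(\bm U^\ua)$ and $\bm\psi^\rnd=\Psi^{-1}(\bm V^\ua)$, where $\bm U,\bm V$ are independent i.i.d.\ uniform samples. Since $\bm Z$ and $\bm\xi$ are independent, the uniform quantile processes $\sqrt n(U^\ua_{\lceil nu\rceil}-u)$ and $\sqrt n(V^\ua_{\lceil nu\rceil}-u)$ converge jointly to independent Brownian bridges $\beta^{(1)},\beta^{(2)}$. Their difference then converges to $\beta^{(1)}-\beta^{(2)}\eqd\sqrt2\,\beta$, and this is the source of the factor $2^{r/2}$. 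The plan is to mimic the proof of Theorem~\ref{thm:weak_lim_proxy_det} on compact subintervals $[\varepsilon,1-\varepsilon]$ and to control the edges using the additional domination by $\varpi$.

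\textbf{Step 1 (bulk).} Fix $\varepsilon>0$. On indices $k/(n+1)\in[\varepsilon,1-\varepsilon]$ we apply the mean value theorem:
\[
\sqrt n\bigl(\Psi^{-1}(U^\ua_k)-\Psi^{-1}(V^\ua_k)\bigr)=\eta(\theta_k)\sqrt n(U^\ua_k-V^\ua_k),
\]
where $\theta_k$ lies between $U^\ua_k$ and $V^\ua_k$. Glivenko--Cantelli gives $\theta_k\to k/(n+1)$ uniformly, and $\eta$ is continuous inside $(0,1)$. Likewise, $\omega(\psi^\rnd_k)=\omega(\Psi^{-1}(V^\ua_k))$ is uniformly close to $\omega(\Psi^{-1}(k/(n+1)))$, because $\omega$ and $\Psi^{-1}$ are continuous on the bulk. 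The normalized sum $n^{r/2-1}\sum_k$ restricted to the bulk is therefore a continuous functional, namely a Riemann sum in the spirit of Lemma~\ref{lem:Riemann-sums}, of the joint quantile process. The continuous mapping theorem then yields convergence to $\int_\varepsilon^{1-\varepsilon}\omega(\Psi^{-1}(u))\eta(u)^r|\beta^{(1)}_u-\beta^{(2)}_u|^r\dd u$.

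\textbf{Step 2 (edges).} This is the main obstacle. We must show
\[
\lim_{\varepsilon\to0}\limsup_n\p\Bigl[n^{r/2-1}\sum_{k/(n+1)\notin[\varepsilon,1-\varepsilon]}\omega(\psi^\rnd_k)|Z^\ua_k-\psi^\rnd_k|^r>\delta\Bigr]=0.
\]
The difficulty is that the weight is now random and evaluated at $V^\ua_k$, not at $k/(n+1)$. This is where $\omega\circ\Psi^{-1}\le\varpi$ enters. On the event of the law of the iterated logarithm / Cs\'aki-type bounds,
\[
|U^\ua_k-k/(n+1)|\lesssim \kappa(k/n)\sqrt{(k/n)(1-k/n)/n},
\]
holding uniformly with probability tending to one (exactly how $\kappa$ is used in the deterministic proof), and similarly for $V^\ua_k$. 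Combined with regular variation of $\varpi$ and $\eta$, Potter bounds then give $\varpi(V^\ua_k)\le C\varpi(k/(n+1))$ and $\eta(\theta_k)\le C\eta(k/(n+1))$. Next, the split $|Z^\ua_k-\psi^\rnd_k|^r\le2^{r-1}(|Z^\ua_k-\psi^\Det_k|^r+|\psi^\rnd_k-\psi^\Det_k|^r)$ reduces each piece to a deterministic-proxy edge sum with weight $\varpi$ instead of $\omega$. These are bounded by $n^{-1}\sum\varpi(u_k)[\kappa(u_k)\eta(u_k)\sqrt{u_k(1-u_k)}]^r$ over edge indices, with $u_k=k/(n+1)$. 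By the integrable regularly varying majorant and Lemma~\ref{lem:Riemann-sums} (Remark~\ref{rem:Riemann-sums}), this is close to $\int_{(0,\varepsilon)\cup(1-\varepsilon,1)}(\cdots)\to0$. The extreme indices $k=\Oh(1)$, where the LIL-type bound is too crude, should be handled as in the deterministic theorem, using the fact that the majorant is integrable and regularly varying with index $\ge-1$.

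\textbf{Step 3.} We combine Steps 1 and 2 via the standard approximation lemma (Billingsley, Thm.~3.2). It remains to check that the limit integral is a.s.\ finite and that its truncations converge as $\varepsilon\to0$. By Fubini, $\E\int\omega\eta^r|\beta|^r\lesssim\int\omega(\Psi^{-1}(u))\eta(u)^r(u(1-u))^{r/2}\dd u<\infty$ under $\mathbf H_{\kappa,r}$, using $\kappa\ge1$. Finally, $\beta^{(1)}-\beta^{(2)}\eqd\sqrt2\beta$ gives the constant $2^{r/2}$ and hence \eqref{eq:weak_lim_proxy_rand}.
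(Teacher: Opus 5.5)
Your proposal is correct and follows essentially the paper's proof: bulk convergence from the two independent uniform quantile processes (whence $\beta-\tilde\beta\eqd\sqrt2\,\beta$ and the factor $2^{r/2}$) together with uniform convergence of the random weights $\omega(\psi^\rnd_k)$, and edge control via $|a-b|^r\le 2^{r-1}(|a|^r+|b|^r)$, Potter bounds for $\varpi\ge\omega\circ\Psi^{-1}$ and $\eta$, the $\kappa$-weighted approximation, and Lemma~\ref{lem:Riemann-sums} applied to the integrable regularly varying majorant. The one point to tighten is where Potter's bounds get their ratio control $c_\varepsilon v_k\le U^\ua_k\le c_\varepsilon^{-1}v_k$: as in the paper, take it from the uniform linear bounds for uniform order statistics (Shorack--Wellner, p.~415), which hold for all $k\in\br n$ simultaneously with probability at least $1-\varepsilon$, and not from the $\kappa$-weighted bound, which by itself gives no ratio control for $k$ up to order $\kappa(1/n)^2$ (an unbounded range, not just $k=\Oh(1)$); with those linear bounds no separate treatment of extreme indices is needed.
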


\noindent The proofs of Theorems~\ref{thm:weak_lim_proxy_det} and~\ref{thm:weak_lim_proxy_rand} are found in Appendix~\ref{app:Riemann-Poisson} below. 

\begin{rem}
\label{rem:weak_lim_proxy}
(i) Notably, if the derivative $\Psi'$ is accessible, one may take $\omega(x)=\Psi'(x)^r$ in Theorems~\ref{thm:weak_lim_proxy_det} and~\ref{thm:weak_lim_proxy_rand} to obtain a distribution invariant limiting law $\int_0^1 |\beta_u|^r\dd u$. In the case of the random proxy, this choice does \emph{not} require access to $\Psi^{-1}$. Moreover, the regularly varying assumptions in both theorems are immediately satisfied as soon as $1/\eta$ is bounded by a regularly varying function at $0$ and $1$, in which case we may take $\kappa(u)=\sqrt{|\log(u(1-u))|}$, for instance.\\

(ii) Another sensible choice of weight function $\omega$ is a continuous modification of $\1_{[-c,c]}$ for some constant $c\in(0,\infty)$, essentially trimming the tails of $\Psi$. This aggressive weight function removes any concern for the behaviour of possible ill behaviour of the functions in Theorems~\ref{thm:weak_lim_proxy_det} and~\ref{thm:weak_lim_proxy_rand} at $0$ or $1$.
\end{rem}

In case a user is interested in other weight functions, it is valuable to understand when $\eta=1/\Psi'\circ\Psi^{-1}$ is regularly varying at $0$ and $1$. The following result identifies two important families of distributions with this property. Namely, those whose tails or log-tails are regularly varying. For simplicity, we only consider the positive tails, with a similar analysis being valid for the negative tails.\\

\noindent Before presenting the result, we recall that if a differentiable function $f\in\RV_a^\rho$, then its derivative $f'$ is in $\RV_a^{\rho-1}$ whenever it is eventually monotone~\cite[Thm~1.7.2]{MR1015093}, however, the converse is stronger. Namely, if $f'\in\RV_a^{\rho-1}$ is regularly varying, then its antiderivative $f\in\RV_a^\rho$ by virtue of Karamata's theorem~\cite[Thm~1.5.11]{MR1015093}.

\begin{prop}
\label{prop:RV-tails}
Suppose $\Psi$ is absolutely continuous with Radon-Nikodym derivative $\Psi'$. Let $g$ be an element of $\RV_0^{-1}$ with $G(x)\coloneqq\int_x^1g(y)\dd y$ tending  to infinity as $x$ tends to zero. Then, the following statements hold.
\begin{itemize}[leftmargin=2em]
\item[{\normalfont(a)}] If the derivative $\Psi'(x)=x^{-\beta-1}\ell(x)$ for some $\beta>0$ and $\ell\in\RV_\infty^0$, then 
\[
\Psi^{-1}(1-u)
=u^{-1/\beta}\tilde\ell(u)
\quad\text{and}\quad
\eta(1-u)
\sim \beta^{-1}u^{-1-1/\beta}\tilde\ell(u)
\]
as $u$ tends to zero, where $\tilde\ell$ is slowly varying at $0$. In particular, $\eta$ belongs to $\RV_1^{-1-1/\beta}$.
\item[{\normalfont(b)}] If the derivative 
$$\frac{\dd}{\dd x}G(1-\Psi(x))=g(1-\Psi(x))\Psi'(x)$$ 
belongs to $\RV_\infty^{\beta-1}$ for some $\beta>0$, then $\eta$ belongs to $\RV_1^{-1}$.
\end{itemize}
\end{prop}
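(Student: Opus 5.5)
The plan is to deduce both parts from Karamata's theorem~\cite[Thm~1.5.11]{MR1015093}, together with the standard facts on asymptotic inverses and on compositions of regularly varying functions~\cite[\S1.5]{MR1015093}. In both parts we fix $u$ small and set $x\coloneqq\Psi^{-1}(1-u)$, so that $u=1-\Psi(x)$ by continuity of $\Psi$. Since $\eta=1/\Psi'\circ\Psi^{-1}$, we then have $\eta(1-u)=1/\Psi'(x)$ with $x\to\infty$ as $u\downarrow0$.

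For (a), we first obtain the tail asymptotics
\[
1-\Psi(x)=\int_x^\infty y^{-\beta-1}\ell(y)\,\dd y\sim \beta^{-1}x^{-\beta}\ell(x)\qquad\text{as } x\to\infty,
\]
by Karamata's theorem in the convergent case, since $\beta>0$. Hence $1-\Psi\in\RV_\infty^{-\beta}$, and it is continuous and decreasing. Its inverse $u\mapsto\Psi^{-1}(1-u)$ is therefore an (asymptotic) inverse of a regularly varying function, so it lies in $\RV_0^{-1/\beta}$~\cite[Thm~1.5.12]{MR1015093}. This gives $\Psi^{-1}(1-u)=u^{-1/\beta}\tilde\ell(u)$ with $\tilde\ell$ slowly varying at $0$. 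Next we rewrite the same relation as $u=1-\Psi(x)\sim\beta^{-1}x\Psi'(x)$, that is, $\Psi'(x)\sim\beta u/x$. Therefore
\[
\eta(1-u)=\frac{1}{\Psi'(x)}\sim\frac{x}{\beta u}=\beta^{-1}u^{-1-1/\beta}\tilde\ell(u),
\]
which is the claimed asymptotic and shows $\eta\in\RV_1^{-1-1/\beta}$.

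For (b), we set $H(x)\coloneqq G(1-\Psi(x))$. This function is nondecreasing, because $g\ge0$ (it is positive, being regularly varying), and $H(x)\to\infty$ as $x\to\infty$, because $1-\Psi(x)\to0$ and $G(0+)=\infty$. Since $H'\in\RV_\infty^{\beta-1}$ with $\beta>0$, Karamata's theorem in the divergent case gives $H\in\RV_\infty^{\beta}$ and $H(x)\sim xH'(x)/\beta$. Using $H'(x)=g(u)\Psi'(x)$, we get
\[
\eta(1-u)=\frac{g(u)}{H'(x)}\sim\frac{g(u)\,x}{\beta H(x)}=\frac{g(u)\,H^{-1}(G(u))}{\beta\,G(u)}.
\]
Here we used $H(x)=G(u)$, so that $x=H^{-1}(G(u))$ up to asymptotic equivalence.

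It remains to check the regular variation of each factor. First, $g\in\RV_0^{-1}$, and $G$ is slowly varying at $0$: this is Karamata's theorem for $\int_u^1 g$ in the divergent, index $-1$ case, using that $G(u)\to\infty$. Second, $H^{-1}\in\RV_\infty^{1/\beta}$, again by the asymptotic inverse theorem. The composition of a regularly varying function at infinity with a slowly varying function tending to infinity is slowly varying~\cite[Prop.~1.5.7]{MR1015093}, so $H^{-1}\circ G$ is slowly varying at $0$. Putting these together, $\eta(1-u)=u^{-1}$ times a slowly varying function, that is, $\eta\in\RV_1^{-1}$.

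The main obstacle is the inversion step in (b). Replacing $x$ by $H^{-1}(G(u))$ inside the asymptotic equivalence requires $H$ to be genuinely invertible, or else working with generalized inverses and using that asymptotic equivalence is preserved under them for regularly varying functions of positive index. It also requires $\Psi^{-1}(1-u)\to\infty$, so that the asymptotics at infinity apply; here we should assume the support is unbounded above, which is implicit in the hypotheses.
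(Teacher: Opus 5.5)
Your proposal is correct and follows the paper's proof essentially step for step. For (a) it uses Karamata's theorem and the inverse theorem for regularly varying functions. For (b) it applies Karamata to $H=G\circ(1-\Psi)$, derives $\eta(1-u)\sim\beta^{-1}g(u)\,\Psi^{-1}(1-u)/G(u)$ with $\Psi^{-1}(1-u)=H^{-1}(G(u))$, and uses slow variation of $G$ and of $H^{-1}\circ G$. The inversion concern you flag is harmless: $H'>0$ near infinity makes $H$ strictly increasing there and forces unbounded support, so $\Psi^{-1}(1-u)=H^{-1}(G(u))$ holds exactly for small $u$.
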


\begin{proof}[Proof of Proposition~\ref{prop:RV-tails}]
Part (a). Note that
$$1-\Psi(x)\sim \beta^{-1}x^{-\beta}\ell(x)$$
as $x$ tends to infinity by Karamata's theorem~\cite[Thm~1.5.11]{MR1015093}. Moreover, by~\cite[Thm~1.5.12]{MR1015093}, the generalized inverse is also regularly varying: $\Psi^{-1}(1-u)=u^{-1/\beta}\tilde\ell(u)$ as $u$ tends to zero,  for some $\tilde\ell\in\RV_0^0$. Thus, using that
$\Psi'(x)\sim\beta(1-\Psi(x))/x$
as $x\to\infty$, we obtain
\[
\eta(1-u)
=\frac{1}{\Psi'\circ\Psi^{-1}(1-u)}
\sim \frac{\Psi^{-1}(1-u)}
{\beta\bigl(1-\Psi\circ\Psi^{-1}(1-u)\bigr)}
=\beta^{-1}u^{-1-1/\beta}\tilde\ell(u),
\]
as $u$ tends to zero.\\

\noindent Part (b). Karamata's theorem gives 
\begin{equation}
\label{eq:G-RV}
G\circ(1-\Psi(x))
\sim
\beta^{-1}xg(1-\Psi(x))\Psi'(x),
\end{equation}
as $x$ tends to infinity,  and is thus in $\RV_\infty^\beta$. Moreover, the positivity and regular variation of $g$ implies that $G$ is a slowly varying function (by Karamata's theorem~\cite[Thm~1.5.11]{MR1015093}), strictly decreasing and continuous and hence a bijection from $(0,1)$ to $(0,\infty)$. Let $G^{-1}$ denote the inverse of $G$. Then, the inverse map of $G\circ(1-\Psi)$ satisfies $\Psi^{-1}\circ(1-G^{-1})\in\RV_\infty^{1/\beta}$, where $\tilde\ell\in\RV_\infty^0$. Thus, $\Psi^{-1}(1-u)=G(u)^{1/\beta}\ell(G(u))\in\RV_0^0$ for some $\ell\in\RV_\infty^0$. Hence, solving for $1/\Psi'$ in~\eqref{eq:G-RV} and substituting $x=\Psi^{-1}(1-u)$ gives, as $u$ tends to zero,
\[
\eta(1-u)
= \frac{1}{\Psi'(\Psi^{-1}(1-u))}
\sim
\beta^{-1}
\frac{\Psi^{-1}(1-u)g(u)}{G(u)}
\in\RV_0^{-1}.\qedhere
\]
\end{proof}

\begin{rem}
The distributions considered in part (a) of Proposition~\ref{prop:RV-tails} encompass most power tail laws (i.e., in the domain of attraction of stable laws), while part (b) encompasses most stretched exponential tails ($g(x)=1/x$), as well as lighter tails (e.g., $g(x)=1/[x\log(1/x)]$) or heavier tails (e.g., $g(x)=\exp(\sqrt{\log(1/x)})/x$) that are still lighter than any power tail law.
\end{rem}

\subsubsection{Dependent Noise}
\label{subsec:dependent_noise}

In numerous physical and econometric applications, the underlying noise process exhibits persistent temporal dependence. We now relax the assumption of mutual independence among the latent noise variables $\bm{Z} = \{Z_k;k\in\br{n}\}$. Since the deterministic concentration inequalities established in Theorem~\ref{thm:CI_r12} remain valid; the structural impact of the noise dependence is entirely absorbed by the empirical-to-proxy discrepancy $\langle\bm{w},|\bm{Z}^\ua - \bm\psi|^r\rangle$, affecting only its asymptotic behaviour.\\

\noindent To control the empirical-to-proxy discrepancy, we will borrow techniques from~\cite{MR4028181} to control optimal transport distances. More precisely, we use~\cite[Thm~2.9]{MR4028181} to obtain the formula
\begin{equation}
\label{eq:W1}
\Wd_1(F,G)
=\int_{-\infty}^\infty |F(x)-G(x)|\dd x,
\end{equation}
where $\Wd_p(F,G)$ denotes the $L^p$-Wasserstein distance between the distribution functions $F$ and $G$ on $\R$, given by the formula~\cite[Thm~2.10]{MR4028181}
\begin{equation}
\label{eq:Wp}
\Wd_p(F,G)^p
\coloneqq\int_0^1|F^{-1}(u)-G^{-1}(u)|^p\dd u,
\end{equation}
for $ p\ge 1.$ In the case $r=2$, we use the following upper bound~\cite[Prop.~7.14]{MR4028181}:
\begin{equation}
\label{eq:W2}
\Wd_2(F,G)^2
\le 4\int_{-\infty}^\infty|x|\,|F(x)-G(x)|\dd x.
\end{equation}

Since $\omega$ is bounded, the empirical-to-proxy discrepancy is directly bounded by the product between $n^{-1}\|\omega\|_\infty$ and the $L^p$-Wasserstein distance in~\eqref{eq:Wp} between the discrete distributions 
\[
\iota[\bm{Z}](x)\coloneqq \frac{1}{n}\sum_{k=1}^n\1_{\{Z_k\le x\}}
\quad\text{and}\quad
\iota[\bm{\psi}](x)\coloneqq \frac{1}{n}\sum_{k=1}^n\1_{\{\psi_k\le x\}}.
\]
However, the proofs for the formulae~\eqref{eq:W1} and~\eqref{eq:W2} cannot be easily adapted to the general weighted case without introducing more random elements that complicate the results without substantially increasing their generality. In this sense, the choice of weight function $\omega$ only serves to control the finite-sample variations and the breakdown points in the case $r=1$. 

\begin{prop}[Expected empirical-to-proxy discrepancy]
\label{prop:expected_dependent_discrepancy}
Let $\bm{Z}=\{Z_k;k\in\br{n}\}$ be stationary with marginal law $\Psi$. Let $\bm\psi$ be generated via either~\eqref{eq:proxies_deterministic} or~\eqref{eq:proxies_random}. Assume that some non-negative $\{\rho_{ij};i,j \in\br{n}\}$ satisfy
\begin{equation}
\label{eq:indicator_covariance_bound}
|\Cov[ \1_{\{Z_i \le x\}}, \1_{\{Z_j \le x\}} ]| 
\le \rho_{ij} \Psi(x) (1 - \Psi(x)),
\end{equation}
for $x\in\R$. Set $W\coloneqq \|\omega\|_\infty$ and
\[
D_n \coloneqq \sum_{i=1}^n \sum_{j=1}^n \rho_{ij}.
\]
Then, the following inequalities hold
\begin{align*}
\E\big[\langle\bm{w}, 
|\bm{Z}^\ua-\bm\psi|\rangle\big] 
&\le W\big(\sqrt{D_n}+\sqrt{n}\big)
\int_{-\infty}^\infty \sqrt{\Psi(x)(1-\Psi(x))}\dd x,\\
\E\big[\langle\bm{w}, 
|\bm{Z}^\ua-\bm\psi|^2\rangle\big] 
&\le 4W\big(\sqrt{D_n}+\sqrt{n}\big)
\int_{-\infty}^\infty |x|\sqrt{\Psi(x)(1-\Psi(x))}\dd x.
\end{align*}
\end{prop}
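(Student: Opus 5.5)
Since $\omega$ is bounded, the weights satisfy $w_k\le W$, so I first reduce to the unweighted sums:
\[
\langle\bm w,|\bm Z^\ua-\bm\psi|^r\rangle\le W\sum_{k=1}^n|Z^\ua_k-\psi_k|^r=nW\,\Wd_r\bigl(\iota[\bm Z],\iota[\bm\psi]\bigr)^r,
\]
for $r=1,2$. Here I use that the quantile function of an empirical measure of $n$ points is the step function taking the value $Z^\ua_k$ on $((k-1)/n,k/n]$, and similarly for $\bm\psi$. In~\eqref{eq:Wp} the sorted coupling is therefore exactly the pairing $k\leftrightarrow k$. For the random proxy, $\bm w=\omega(\bm\psi^\rnd)$ is random, but the bound $w_k\le W$ is deterministic, so the reduction is unaffected.

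Next I apply~\eqref{eq:W1} for $r=1$ and~\eqref{eq:W2} for $r=2$. Inserting $\Psi$ via the triangle inequality gives
\[
\int|\iota[\bm Z]-\iota[\bm\psi]|\,|x|^{r-1}\,\dd x\le\int|\iota[\bm Z]-\Psi|\,|x|^{r-1}\,\dd x+\int|\iota[\bm\psi]-\Psi|\,|x|^{r-1}\,\dd x .
\]
The factor $4$ for $r=2$ comes from~\eqref{eq:W2}. I then take expectations and use Fubini, which reduces the problem to pointwise bounds on $\E|\iota[\bm Z](x)-\Psi(x)|$ and $\E|\iota[\bm\psi](x)-\Psi(x)|$.

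For the noise term, stationarity gives $\E[\iota[\bm Z](x)]=\Psi(x)$. Cauchy--Schwarz together with~\eqref{eq:indicator_covariance_bound} then yields
\[
\E|\iota[\bm Z](x)-\Psi(x)|\le\frac1n\Bigl(\sum_{i,j}\Cov[\1_{\{Z_i\le x\}},\1_{\{Z_j\le x\}}]\Bigr)^{1/2}\le\frac{\sqrt{D_n}}{n}\sqrt{\Psi(x)(1-\Psi(x))}.
\]
Multiplying by $nW$ produces the $\sqrt{D_n}$ contribution.

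For the proxy term there are two cases.
\begin{itemize}
\item In the random case, $\bm\xi$ is i.i.d. The same computation with $\rho_{ij}=\1_{\{i=j\}}$ gives the bound $n^{-1/2}\sqrt{\Psi(1-\Psi)}$, which supplies the $\sqrt n$ contribution.
\item In the deterministic case, $\iota[\bm\psi^\Det](x)=\lfloor(n+1)\Psi(x)\rfloor/n$ wherever $\Psi$ is continuous; in general it equals $\#\{k: k/(n+1)\le\Psi(x)\}/n$. Hence $|\iota[\bm\psi^\Det](x)-\Psi(x)|\le\min\{\Psi(x),1-\Psi(x)\}+O(1/n)$ near the tails, and at most $1/n$ in the bulk.
\end{itemize}

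The main obstacle is the deterministic case: I must show that this discrepancy is dominated by $n^{-1/2}\sqrt{\Psi(x)(1-\Psi(x))}$. My plan is to split according to whether $\Psi(x)(1-\Psi(x))\ge1/n$. Where it is, the discrepancy is at most about $1/n\le n^{-1/2}\sqrt{\Psi(1-\Psi)}$. Where it is not, $\Psi(x)<1/(n+1)$ (or symmetrically $1-\Psi(x)$ is that small), so that $\iota[\bm\psi^\Det](x)=0$ or $1$. The discrepancy is then at most $\min(\Psi,1-\Psi)\le\sqrt{\Psi(1-\Psi)}\sqrt{\min(\Psi,1-\Psi)}\lesssim n^{-1/2}\sqrt{\Psi(1-\Psi)}$.

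The constants need care: the precise $\pm1/(n+1)$ bookkeeping is where the bound $\sqrt n$, rather than $C\sqrt n$, must be checked, possibly using $\lfloor (n+1)u\rfloor/n-u\in[-1/n,1/n]$. Integrating against $|x|^{r-1}\,\dd x$ and multiplying by $nW$ then gives both stated inequalities.
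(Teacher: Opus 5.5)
Your argument follows the paper's proof step for step, and all of it is correct except the deterministic-proxy bookkeeping, which you flagged yourself. The shared steps are: bound $w_k\le W$; identify $\sum_k|Z^\ua_k-\psi_k|^r$ with $n\Wd_r(\iota[\bm Z],\iota[\bm\psi])^r$; apply \eqref{eq:W1} or \eqref{eq:W2}; pass through $\Psi$ by the triangle inequality; then control the noise term by Jensen/Cauchy--Schwarz with \eqref{eq:indicator_covariance_bound}, the random proxy by independence, and the deterministic proxy by the pointwise bound $|\iota[\bm\psi^\Det](x)-\Psi(x)|\le\sqrt{\Psi(x)(1-\Psi(x))/n}$.

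\textbf{Where the deterministic case breaks.} Your split at $\Psi(1-\Psi)=1/n$ does not match where $\iota[\bm\psi^\Det]$ jumps.
\begin{itemize}
\item $\Psi(x)(1-\Psi(x))<1/n$ does not force $\Psi(x)<1/(n+1)$. At $y=\Psi(x)=1/(n+1)$ one has $y(1-y)=n/(n+1)^2<1/n$, yet $\iota[\bm\psi^\Det](x)=1/n\neq 0$.
\item At that same point your crude bulk bound $1/n$ exceeds the target $\sqrt{y(1-y)/n}=1/(n+1)$.
\item The inequality $\min\{y,1-y\}\le\sqrt{y(1-y)}\sqrt{\min\{y,1-y\}}$ is false for $0<y<1/2$, since it reads $y\le y\sqrt{1-y}$.
\end{itemize}

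\textbf{The fix.} As in the paper, split instead at $1/(n+1)$ and $n/(n+1)$, writing $\iota$ for $\iota[\bm\psi^\Det](x)$.
\begin{itemize}
\item For $y<1/(n+1)$: $\iota=0$, and $y=\sqrt{y(1-y)}\sqrt{y/(1-y)}\le\sqrt{y(1-y)/n}$ because $y/(1-y)\le 1/n$.
\item For $y\ge n/(n+1)$: symmetrically $\iota=1$, and $1-y\le\sqrt{y(1-y)/n}$.
\item For $1/(n+1)\le y<n/(n+1)$: with $k=\lfloor (n+1)y\rfloor$, one has $\iota-y=k/n-y\in((y-1)/n,\,y/n]$, so $|\iota-y|\le\max\{y,1-y\}/n$. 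Squaring shows this is at most $\sqrt{y(1-y)/n}$ exactly when $1/(n+1)\le y\le n/(n+1)$.
\end{itemize}
This gives $|\iota[\bm\psi^\Det](x)-\Psi(x)|\le\sqrt{\Psi(x)(1-\Psi(x))/n}$ everywhere, hence the constant $1$ in front of $\sqrt n$.
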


\begin{rem}
\label{rem:expected_dependent_discrepancy}
(a) Note that the coefficients $\rho_{ij}$ (and hence $D_n$) are independent of the distribution $\Psi$ as long as it is continuous, and only depend on the dependence structure. Indeed, if $\Psi$ is continuous, then the variables $U_i=\Psi(Z_i)$ are uniform on $(0,1)$ and
\[
\1_{\{Z_i\le x\}}
=
\1_{\{U_i\le \Psi(x)\}}.
\]
Thus,~\eqref{eq:indicator_covariance_bound} is equivalent to
\begin{equation}
\label{eq:indicator_covariance_bound_unif}
|\Cov[ \1_{\{U_i \le u\}}, \1_{\{U_j \le u\}} ]| 
\le \rho_{ij}\, u (1 - u),
\end{equation}
for $0\le u\le 1.$\\

\noindent(b) From the proof of Proposition~\ref{prop:expected_dependent_discrepancy}, the following inequality can be recovered 
\[
\E\left[
\Wd_r(\iota[\bm{Z}],\Psi)^r
\right]
\le
r^2 n^{-1}\sqrt{D_n}
\int_{\R}
|x|^{r-1}
\sqrt{\Psi(x)(1-\Psi(x))}
\,\dd x,
\]
for $r=1,2$. Since $\Ld=\Pd\le\sqrt{\Wd_1}$ by~\cite[Thm~2]{ProbMetrics}, we can also derive a bound for $\Ld$ and hence, establish control for the MAD estimator (via Theorem~\ref{thm:CI-MAD}); however, the convergence rate would necessarily deteriorate, because of the square root. Hence, in Corollary~\ref{cor:mad_correlated}, we opted for stricter conditions that yield improved rates.
\end{rem}

\begin{proof}[Proof of Proposition~\ref{prop:expected_dependent_discrepancy}]
By virtue of~\eqref{eq:W1} and~\eqref{eq:Wp}, we have
\[
\langle\bm{w},|\bm{Z}^\ua-\bm\psi|\rangle
\le nW\int_{-\infty}^\infty |\iota[\bm{Z}](x)-\iota[\bm\psi](x)|\dd x.
\]
Since $\E[\1\{Z_k\le x\}]=\Psi(x)$ for $x\in\R$, $k\in\br{n}$, Jensen's inequality and our assumption yield
\[
\E\big[|\iota[\bm{Z}](x)-\Psi(x)|\big]^2
\le \frac{1}{n^2}\sum_{i=1}^n\sum_{j=1}^n
    \Cov\big[\1_{\{Z_i\le x\}},\1_{\{Z_j\le x\}}\big]
\le \frac{D_n}{n^2} \Psi(x)(1-\Psi(x)).
\]

Suppose for the moment that $\bm\psi$ is generated via~\eqref{eq:proxies_deterministic} and that we establish the inequality
\begin{equation}
\label{eq:CDF-vs-quantile-CDF}
|\iota[\bm\psi](x)-\Psi(x)|
\le \sqrt{\Psi(x)(1-\Psi(x))/n},
\end{equation}
for $x$ in $\R$. Then, the previous two displays and the triangle inequality yield
\[
\E[|\iota[\bm{Z}](x)-\iota[\bm\psi](x)|]
\le \big(n^{-1}\sqrt{D_n} +n^{-1/2}\big)\sqrt{\Psi(x)(1-\Psi(x))}.
\]
Integrating the previous display over $\R$ would then yield the first inequality in the statement. The second one would follow similarly, via~\eqref{eq:W2}. The case of random proxies, generated via~\eqref{eq:proxies_random}, is completely analogous. Indeed, in that case, the independence gives 
$$\Cov[\1_{\{\xi_i\le x\}},\1_{\{\xi_j\le x\}}]=\1_{\{i=j\}}\Psi(x)(1-\Psi(x)),$$
whose sum over all $i,j\in\br{n}$ is $n\Psi(x)(1-\Psi(x))$, so the previous procedure yields
\begin{align*}
\E[|\iota[\bm{Z}](x)-\iota[\bm\xi](x)|]
&\le \E[|\iota[\bm{Z}](x)-\Psi(x)|]
+\E[|\iota[\bm{\xi}](x)-\Psi(x)|]\\
&\le \big(n^{-1}\sqrt{D_n} +n^{-1/2}\big)\sqrt{\Psi(x)(1-\Psi(x))}.
\end{align*}

To complete the proof, it remains to establish~\eqref{eq:CDF-vs-quantile-CDF} when the proxies are deterministic and generated via~\eqref{eq:proxies_deterministic}. First, note that $\psi_k\le x$ is equivalent to $k\le (n+1)\Psi(x)$, so 
$$\iota[\psi](x) 
= \min\{1, \lfloor (n+1)\Psi(x) \rfloor/n\}$$ for $x$ in $\R$. Let $y=\Psi(x)$, then for $y\in [0,1/(n+1)]$, we have $\iota[\bm\psi](x)=0$ and thus 
$$|\iota[\bm\psi](x)-\Psi(x)|=y\le \sqrt{y(1-y)/n}.$$ 
The case $y\in[n/(n+1),1]$ follows symmetrically. Finally, consider $y\in(1/(n+1),n/(n+1))$ and let $k=\lfloor(n+1)y\rfloor\in\{1,\ldots,n-1\}$. Then $\iota[\bm\psi](x)=k/n$ and hence,
\[
|\iota[\bm\psi](x)-\Psi(x)|
=|\iota[\bm\psi](x)-y|
\le\max\Big\{\frac{k}{n}-\frac{k}{n+1},\frac{k+1}{n+1}-\frac{k}{n}\Big\}
\leq \frac{\max\{y,1-y\}}{n}.
\]
Suppose $y\ge 1-y$ without loss of generality. Both values lie in $(1/(n+1),n/(n+1))$, so
\[
\sqrt{(1-y)/y}
>\sqrt{1/n}
>\sqrt{n}/(n+1)
\]
which yields  $ y/(n+1)<\sqrt{y(1-y)/n}$.
\qedhere
\end{proof}

While Proposition \ref{prop:expected_dependent_discrepancy} provides a universally valid topological bound, directly computing the covariance of indicator functions for arbitrary joint distributions is analytically formidable. However, when the underlying noise process belongs to the Gaussian family, we can dramatically simplify this covariance structure by invoking a profound functional inequality.

\begin{lemma}[Gebelein's Inequality~\cite{MR7220}]
\label{lem:gebelein}
Let $(\zeta_1,\zeta_2)$ be a Gaussian vector with correlation 
$$\rho \coloneqq 
\Cov[\zeta_1,\zeta_2]/\sqrt{\Var(\zeta_1)\Var(\zeta_2)}.$$ 
Then, for any Borel functions $g_1,g_2 : \R \to \R$, 
\begin{equation}
\big| \Cov[g_1(\zeta_1), g_2(\zeta_2)] \big| 
\le |\rho| \sqrt{\Var(g_1(\zeta_1)) \Var(g_2(\zeta_2))}.
\end{equation}
\end{lemma}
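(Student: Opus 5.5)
The plan is to reduce Gebelein's inequality to the fact that the Hermite polynomials diagonalise the bivariate Gaussian kernel (Mehler's formula).

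\textbf{Reductions.} Covariances and correlations are invariant under affine changes of each coordinate. We may therefore assume $\zeta_1,\zeta_2$ are centered with unit variance; a degenerate coordinate makes the left-hand side vanish. Replacing $g_i$ by $g_i(\mu_i+s_i\,\cdot)$ absorbs the affine change. We may also assume $\Var(g_i(\zeta_i))<\infty$, since otherwise the right-hand side is infinite (or zero times infinity, which we interpret trivially when $\rho=0$, where independence gives zero covariance). Subtracting means, we take $\E[g_i(\zeta_i)]=0$ and $g_i\in L^2(\gamma)$, where $\gamma$ is the standard Gaussian measure.

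\textbf{Hermite expansion.} Let $\{h_k\}_{k\ge0}$ be the normalized Hermite polynomials, an orthonormal basis of $L^2(\gamma)$ with $h_0=1$. Expand
\[
g_i=\sum_{k\ge1}a^{(i)}_k h_k .
\]
The $k=0$ coefficient vanishes because the $g_i$ are centered, and Parseval gives $\sum_k (a^{(i)}_k)^2=\Var(g_i(\zeta_i))$. The key identity is
\[
\E[h_k(\zeta_1)h_l(\zeta_2)]=\rho^k\1_{\{k=l\}} .
\]
I would prove it via the generating function $e^{t x-t^2/2}=\sum_k t^k h_k(x)/\sqrt{k!}$. Computing $\E[e^{t\zeta_1-t^2/2}e^{s\zeta_2-s^2/2}]=e^{\rho ts}$ directly from the Gaussian moment generating function, and comparing coefficients of $t^ks^l$, gives the identity.

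\textbf{Conclusion.} By bilinearity, and using $L^2$ convergence of both expansions together with the Cauchy--Schwarz continuity of $(f,g)\mapsto\E[f(\zeta_1)g(\zeta_2)]$ to justify the interchange of sums,
\[
\Cov[g_1(\zeta_1),g_2(\zeta_2)]=\sum_{k\ge1}\rho^k a^{(1)}_k a^{(2)}_k .
\]
Since $|\rho|\le1$, we have $|\rho|^k\le|\rho|$ for $k\ge1$. Cauchy--Schwarz in $\ell^2$ then yields
\[
\bigl|\Cov[g_1(\zeta_1),g_2(\zeta_2)]\bigr|\le |\rho|\Bigl(\sum_k (a^{(1)}_k)^2\Bigr)^{1/2}\Bigl(\sum_k (a^{(2)}_k)^2\Bigr)^{1/2},
\]
which is the claim.

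\textbf{Main obstacle.} The only delicate point is rigor in the expansion step: completeness of the Hermite basis in $L^2(\gamma)$, and the interchange of the double sum with the expectation. Completeness is standard, because polynomials are dense in $L^2(\gamma)$ thanks to exponential moments. For the interchange, I would use truncated sums $g_i^{(N)}$ and pass to the limit using $|\E[f(\zeta_1)g(\zeta_2)]|\le\|f\|_{L^2(\gamma)}\|g\|_{L^2(\gamma)}$. Alternatively, one can avoid all expansions by using the Ornstein--Uhlenbeck representation $\E[g_1(\zeta_1)g_2(\zeta_2)]=\langle g_1,P_tg_2\rangle$ with $e^{-t}=\rho$, together with the spectral-gap contraction $\|P_tf\|\le e^{-t}\|f\|$ for centered $f$. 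The case $\rho<0$ reduces to $\rho>0$ by replacing $g_2$ with $g_2(-\cdot)$.
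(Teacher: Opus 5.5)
The paper gives no proof of this lemma. It imports the result as a black box from \cite{MR7220} and uses it to verify \eqref{eq:indicator_covariance_bound} in Corollary~\ref{cor:expected_dependent_discrepancy}, and again in some moment bounds in the proof of Theorem~\ref{thm:Kol-Gauss-dependent}.

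Your argument is correct and self-contained, following the classical spectral route. The generating-function computation $\E[e^{t\zeta_1-t^2/2}e^{s\zeta_2-s^2/2}]=e^{\rho ts}$ gives $\E[h_k(\zeta_1)h_l(\zeta_2)]=\rho^k\1_{\{k=l\}}$. This is the same relation as $\E[H_m(W_i)H_k(W_j)]=\Cov[W_i,W_j]^m\1_{\{m=k\}}m!$, which the paper invokes in its appendix. Truncation together with $|\E[f(\zeta_1)g(\zeta_2)]|\le\|f\|_{L^2(\gamma)}\|g\|_{L^2(\gamma)}$ correctly justifies $\Cov[g_1(\zeta_1),g_2(\zeta_2)]=\sum_{k\ge1}\rho^k a^{(1)}_k a^{(2)}_k$. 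Then $|\rho|^k\le|\rho|$ and Cauchy--Schwarz conclude.

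Compared with the citation, your route costs some space but gives more. The same expansion shows $|\Cov[g_1(\zeta_1),g_2(\zeta_2)]|\le|\rho|^m\sqrt{\Var(g_1(\zeta_1))\Var(g_2(\zeta_2))}$ whenever $g_1$ or $g_2$ has Hermite rank at least $m$. That refinement pairs naturally with the Hermite-rank-two functions $G_{x,y}$ handled in the proof of Theorem~\ref{thm:Kol-Gauss-dependent}. Your alternative via the Ornstein--Uhlenbeck spectral gap is also correct.

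One small point on your reductions. If a variance is infinite, the covariance on the left may fail to exist, for example $\rho=0$ with $g_1(\zeta_1)\notin L^1$. So "independence gives zero covariance" does not cover that case. The lemma should be read for $g_i(\zeta_i)\in L^2$, which is the only case the paper uses.
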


\begin{cor}
\label{cor:expected_dependent_discrepancy}
Suppose $\bm{Z}$ is a stationary Gaussian process with $\Psi=\Phi$. Let $\bm\psi$ be generated via either~\eqref{eq:proxies_deterministic} or~\eqref{eq:proxies_random}. Set $\rho_{ij}\coloneqq|\Cov[Z_i,Z_j]|$ for $i,j\in\br{n}$ and $W\coloneqq \max_{k\in\br{n}}|w_k|$. Then
\begin{gather*}
\E\big[\langle\bm{w}, 
|\bm{Z}^\ua-\bm\psi|^r\rangle\big] 
\le C_rW(\sqrt{D_n}+\sqrt{n}),
\quad\text{where}\quad
D_n \coloneqq \sum_{i=1}^n \sum_{j=1}^n \rho_{ij},
\end{gather*}
for $r=1,2$, where 
\begin{gather*}
C_1
\coloneqq\int_{-\infty}^\infty \sqrt{ \Phi(x) (1 - \Phi(x)) } \dd x
\le 1.6148,
\quad
C_2
\coloneqq 4\int_{-\infty}^\infty |x|\sqrt{\Phi(x) (1-\Phi(x))} \dd x
\le 6.7378.
\end{gather*}
\end{cor}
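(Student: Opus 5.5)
The plan is to reduce the corollary to Proposition~\ref{prop:expected_dependent_discrepancy}. The only thing to verify is the indicator covariance condition~\eqref{eq:indicator_covariance_bound} with $\rho_{ij}=|\Cov[Z_i,Z_j]|$; after that, the constants $C_1,C_2$ are bounded numerically. The weights $W=\max_k|w_k|$ play the role of $\|\omega\|_\infty$. Reading the proof of the proposition, boundedness of $\omega$ is used only through the pointwise bound $w_k\le W$ in front of the Wasserstein representation, so the same argument goes through verbatim with $W$ defined from the weight vector.

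First, I check the hypotheses. Since $\Psi=\Phi$ and $\bm Z$ is stationary Gaussian with standard normal marginals, each pair $(Z_i,Z_j)$ is a Gaussian vector with unit variances. Its correlation is therefore exactly $\Cov[Z_i,Z_j]$. Fix $x\in\R$ and apply Gebelein's inequality (Lemma~\ref{lem:gebelein}) with $g_1=g_2=\1_{(-\infty,x]}$. This gives
\[
|\Cov[\1_{\{Z_i\le x\}},\1_{\{Z_j\le x\}}]|\le |\Cov[Z_i,Z_j]|\sqrt{\Var(\1_{\{Z_i\le x\}})\Var(\1_{\{Z_j\le x\}})}=\rho_{ij}\Phi(x)(1-\Phi(x)),
\]
because each indicator is Bernoulli with parameter $\Phi(x)$. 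This is precisely~\eqref{eq:indicator_covariance_bound}, including the diagonal $i=j$, where $\rho_{ii}=1$ and the bound is an equality. Proposition~\ref{prop:expected_dependent_discrepancy} then yields both displayed expectations, with $C_1=\int\sqrt{\Phi(1-\Phi)}$ and $C_2=4\int|x|\sqrt{\Phi(1-\Phi)}$, for either proxy construction.

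It remains to bound the constants. By symmetry of $\Phi$, both integrals equal twice the integral over $[0,\infty)$. On $[0,\infty)$ one has $\Phi\le 1$ and $1-\Phi(x)\le \tfrac12 e^{-x^2/2}$, with sharper Mills-ratio bounds available for large $x$. This already shows the integrals are finite, for instance $C_1\le 2\int_0^\infty \tfrac{1}{\sqrt2}e^{-x^2/4}\dd x=\sqrt{2\pi}\approx 2.5$. That crude bound is not sharp enough for the stated $1.6148$ and $6.7378$, so I expect the numerical constants to be the main (purely computational) obstacle. I would handle them by evaluating the integrals by quadrature with a certified tail: split at some $x_0$ (say $x_0=6$), compute $\int_0^{x_0}$ numerically with a rigorous error bound, using monotonicity of the integrand past its maximum or a Simpson error estimate. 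On $[x_0,\infty)$ I would bound the tail by $\int_{x_0}^\infty (1+|x|)\,\tfrac{1}{\sqrt{2}}e^{-x^2/4}\dd x$, which is negligible. The resulting values, approximately $1.6148$ and $6.7378$ (that is, $4\times 1.68445$), are what the statement records as upper bounds.
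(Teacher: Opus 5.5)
Your proposal is correct and is the paper's argument: Gebelein's inequality with $g_1=g_2=\1_{(-\infty,x]}$ gives \eqref{eq:indicator_covariance_bound} with $\rho_{ij}=|\Cov[Z_i,Z_j]|$, and Proposition~\ref{prop:expected_dependent_discrepancy} then yields both bounds for either proxy. The paper's one-line proof does not address the numerical constants at all.

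There are two minor cautions.

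\emph{The weight constant.} Replacing $\|\omega\|_\infty$ by $W=\max_k|w_k|$ is legitimate only for deterministic weights. For $\bm w=\omega(\bm\psi^\rnd)$ that maximum is random and cannot be taken outside $\E$, so in that case you should keep $\|\omega\|_\infty$.

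\emph{The numerical constants.} The slack is only of order $5\times10^{-5}$, since numerically $C_1\approx1.61474$ and $C_2\approx6.73776$. Your tail bound at $x_0=6$ therefore does not suffice for $C_2$: it adds at least $8\sqrt{2}\,e^{-9}\approx1.4\times10^{-3}$ there. You need to take $x_0\approx 8$ or use a sharper Mills-ratio tail bound.
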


\begin{proof}
The result follows from Proposition~\ref{prop:expected_dependent_discrepancy} as well as Gebelein's inequality (Lemma~\ref{lem:gebelein}), as it establishes~\eqref{eq:indicator_covariance_bound}.
\end{proof}

We say that random variables $\{\xi_n;n\in\N\}$ are $\Oh_{L^p}(a_n)$ for some $p>0$ if $\|\xi_n/a_n\|_{L^p}=\Oh(1)$. Note that, in that case, $\xi_n=\Oh_\p(a_n)$ by Markov's inequality. The following elementary result seems to be unavailable in the literature at this level of generality, despite the abundance of related works under more structured assumptions; see, e.g.,~\cite{MR471045,MR1331224,MR1311984,MR3383341,MR4059190,MR1026312}.

\begin{thm}
\label{thm:Kol-Gauss-dependent}
Let $\bm{W}$ be a stationary zero mean unit variance Gaussian process with covariance function converging to zero at infinity $\lim_{k\to\infty}|\Cov[W_1,W_{k+1}]|=0$. Then
\[
\Kd(\iota[\bm W],\Phi)
=
\Oh_\p\bigl(\sqrt{D_n}/n\bigr),
\quad\text{where}\quad
D_n=\sum_{i=1}^n\sum_{j=1}^n|\Cov[W_i,W_j]|.
\]
\end{thm}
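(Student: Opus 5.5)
The plan is to work on the uniform scale and prove tightness of a normalized empirical process. Set $U_k\coloneqq\Phi(W_k)$, which is uniform, and $G_n(u)\coloneqq\iota[\bm W](\Phi^{-1}(u))=n^{-1}\sum_k\1_{\{U_k\le u\}}$. Since $\Phi$ is continuous and increasing, $\Kd(\iota[\bm W],\Phi)=\sup_{u\in[0,1]}|G_n(u)-u|$. Define the normalized process
\[
\mathbb{G}_n(u)\coloneqq \frac{n}{\sqrt{D_n}}\bigl(G_n(u)-u\bigr).
\]
The claim is exactly that $\sup_u|\mathbb{G}_n(u)|=\Oh_\p(1)$. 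It suffices to show that $\mathbb{G}_n(0)=0$ and that the increments of $\mathbb{G}_n$ satisfy a moment bound of the type used in Billingsley's tightness criterion for processes in $D[0,1]$ (the chaining lemma behind his Theorem 13.5). That criterion bounds the modulus, and the modulus together with the value at one point bounds the supremum.

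\textbf{Step 1 (second moments of increments).} For $0\le a<b\le 1$, write $h=b-a$ and $\Delta_{ab}\coloneqq \mathbb{G}_n(b)-\mathbb{G}_n(a)$. Then $\Delta_{ab}=D_n^{-1/2}\sum_k(\1_{\{a<U_k\le b\}}-h)$. I apply Gebelein's inequality (Lemma~\ref{lem:gebelein}) to the pair $(W_i,W_j)$ with $g_1=g_2=\1_{(\Phi^{-1}(a),\Phi^{-1}(b)]}$. This gives $|\Cov|\le\rho_{ij}h(1-h)$, and hence $\E[\Delta_{ab}^2]\le h$. This is the paper's pointwise bound, now for increments. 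On its own it only yields chaining with a $\log n$ loss: each dyadic level contributes $\Oh(1)$ and there are about $\log n$ levels. So a refinement is needed.

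\textbf{Step 2 (refined covariance via Hermite expansion).} I expand $g=\1_{(\Phi^{-1}(a),\Phi^{-1}(b)]}-h$ in Hermite polynomials, $g=\sum_{m\ge1}c_mH_m/\sqrt{m!}$. This gives
\[
\Cov[g(W_i),g(W_j)]=\sum_{m\ge1}\rho_{ij}^m c_m^2\le \rho_{ij}c_1^2+\rho_{ij}^2h.
\]
Here $c_1=\varphi(\Phi^{-1}(a))-\varphi(\Phi^{-1}(b))$, so $|c_1|\le C h$ uniformly, using that $\varphi\circ\Phi^{-1}$ is Lipschitz on $[0,1]$ up to logarithmic factors; this needs checking, and if necessary I would replace $h^2$ by $h^{2-\epsilon}$. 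Since the covariance tends to zero, for every $\epsilon>0$ there is $K$ with $\sum_{i,j}\rho_{ij}^2\le Kn+\epsilon D_n$. Consequently
\[
\E[\Delta_{ab}^2]\lesssim h^{2-\epsilon}+\frac{Kn+\epsilon D_n}{D_n}h .
\]
The first term is the one that chains without loss. The second, linear term is only dangerous at scales $h\lesssim 1/n$. I would remove it by a short-range/long-range split. The near-diagonal part $\sum_{|i-j|\le K}$ behaves like an $m$-dependent sum, so it can be handled by DKW-type blocking. Alternatively, one can stop the chaining at mesh $h\approx 1/n$ and control the remaining oscillation by monotonicity of $G_n$: within a cell of length $1/n$ one has $|\mathbb{G}_n(u)-\mathbb{G}_n(a)|\le|\Delta_{ab}|+2nh/\sqrt{D_n}$, and $nh/\sqrt{D_n}\le 1/\sqrt{n}$ since $D_n\ge n$.

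\textbf{Step 3 (fourth-moment / product-moment bound and conclusion).} Billingsley's criterion needs $\E[\Delta_{st}^2\Delta_{tu}^2]\le C(u-s)^{2\alpha}$ with $\alpha>1/2$ for $u-s\ge 1/n$. I would obtain this via the diagram formula for the Hermite expansions, in the style of Dehling--Taqqu. The first-order (linear) chaos dominates and produces $(u-s)^{2-2\epsilon}$ times $\bigl(\sum\rho_{ij}/D_n\bigr)^2\le1$, while the higher chaoses are absorbed using $\rho_{ij}\to0$. Combining this with the mesh-$1/n$ monotonicity argument gives tightness of the modulus. Since $\mathbb{G}_n(0)=0$, this yields $\sup_u|\mathbb{G}_n(u)|=\Oh_\p(1)$.

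I expect the main obstacle to be Step 3. Products of four indicator functionals of a dependent Gaussian vector require controlling diagram sums, with only $\rho\to0$ and the normalization by $D_n$ available. If this becomes unmanageable, my fallback is to accept an $\Oh_\p(\sqrt{D_n}\log n/n)$ bound from Step 1 alone and then try to sharpen it by a weighted chaining argument that exploits the $h^2$ gain from Step 2.
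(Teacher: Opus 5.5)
\textbf{There is a genuine gap.} Your outer architecture matches the paper's: the uniform scale, a Billingsley product-moment bound on the grid $\{k/n\}$, and monotonicity of $G_n$ between grid points. But the estimate that carries the whole proof is missing, and the reasoning you sketch for it fails in the short-memory regime.

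\textbf{Step~2.} The term $\frac{Kn+\epsilon D_n}{D_n}h$ is not ``only dangerous at scales $h\lesssim 1/n$''. When $D_n\asymp n$ (i.i.d.\ noise, or fGn with $H\le 1/2$), this term is of order $h$ at every scale. So Step~2 gives nothing beyond Step~1. A second-moment bound of order $h$ sits exactly at the critical exponent and cannot remove the logarithmic loss. The ``only $h\lesssim1/n$ matters'' phenomenon belongs to fourth moments: there one has $h^2+h/D_n$, and $h/D_n\le h^2$ on the $1/n$-grid because $D_n\ge n$. Also, the near-diagonal part $\sum_{|i-j|\le K}$ is a piece of a variance, not of the process. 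There is therefore no $m$-dependent sum to which DKW-type blocking could be applied.

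\textbf{Step~3.} The claim that the linear chaos dominates and the higher chaoses are ``absorbed using $\rho_{ij}\to0$'' is false when $D_n\asymp n$. For every even $m$, the diagonal alone gives $\Var[n^{-1}\sum_iH_m(W_i)]\ge m!/n$. So the chaoses of order $\ge2$ live at the same scale $\sqrt{D_n}/n$ and must be shown tight, not negligible. The Dehling--Taqqu reduction principle is a long-memory phenomenon and does not apply here.

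\textbf{What the paper does instead.} It splits off the first chaos entirely: $T_n(x)=-\Phi'(x)\,n^{-1}\sum_iW_i$. Its supremum is at most $|n^{-1}\sum_iW_i|/\sqrt{2\pi}=\Oh_{L^2}(\sqrt{D_n}/n)$, with no chaining and no need for your near-Lipschitz bound on $c_1$. For the Hermite-rank-$\ge2$ remainder $R_n$, it proves
$\E[|R_n(y)-R_n(x)|^2|R_n(z)-R_n(y)|^2]\le C\frac{D_n^2}{n^4}[(\Phi(z)-\Phi(x))^{3/2}+(\Phi(z)-\Phi(x))/D_n]$.
\begin{itemize}
\item The Hermite/diagram expansion of a product of four indicator-type functionals is controlled (Taqqu's Lemma~4.5 in \cite{MR471045}) only when all pairwise correlations among the indices are below $1/3$.
\item Short-lag correlations may be close to $1$ (e.g.\ fGn with $H$ near $1$). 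So the paper restricts the indices to residue classes modulo $q$, where $|\rho(k)|\le\varepsilon<1/3$ for $k\ge q$, and proves tightness class by class.
\item The distinct-index terms are then bounded by Taqqu's lemma combined with $n\sum_k\rho(k)^2\le 3D_n$.
\item The coincident-index patterns are bounded by Gebelein's inequality and H\"older's inequality, using $\|G_{x,y}\|_{L^r}\le C(\Phi(y)-\Phi(x))^{1/r}$.
\end{itemize}

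The residue-class reduction and this case analysis are the missing ideas; ``diagram formula plus $\rho\to0$'' does not supply them. Your stated fallback yields only $\Oh_\p(\sqrt{D_n}\log n/n)$, which does not prove the theorem.
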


\section{Applications to High-Frequency Stochastic Processes}
\label{sec:sde_applications}

\noindent We now discuss a further application of the preceding results to high-frequency observations of continuous-time stochastic processes. This is a natural setting for the present framework, since the continuous-time decomposition introduced below in~\eqref{eq:continuous_model} leads, after a suitable rescaling of the increments, to an additive array of the form studied in the previous sections.\\

\noindent In high-frequency econometrics, turbulence modelling and signal processing, one is often interested in the scale, or volatility, of the main stochastic driver in the presence of additional perturbations. In the notation of~\eqref{eq:continuous_model}, these perturbations are collected in the process $\mathcal Y$ and may represent drift terms, microstructure effects, or jump components. The relevant feature for our purposes is that, after rescaling, their increments can be controlled through a discrete variation condition. The order-statistic estimators introduced above can therefore be applied directly to the rescaled increments, without requiring the full model to be a semimartingale or the driving noise to be Gaussian.

\subsection{Continuous-Time Model Setup}
Consider a one-dimensional continuous-time stochastic process
$\mathcal{X}=(\mathcal{X}_t;\, t\geq 0)$, observed at the regular high-frequency times
$iT/n$, for $0\leq i\leq n$, over a fixed time horizon $T>0$, with $n\geq 1$. Assume that $\mathcal{X}$ evolves according to the dynamic
\begin{equation}
\label{eq:continuous_model}
\mathcal{X}_t 
= \mathcal{Y}_t 
+ \sigma \mathcal{Z}_t, 
\end{equation}
with $ 0\leq t\leq T,
$ where $\sigma > 0$ is the unknown constant scalar multiple that we seek to estimate. We next specify the nature of each component in a way that connects the continuous-time model with the framework of the previous sections.

\subsubsection{The driving noise process $\mathcal{Z}$} 
The principal stochastic driver $\mathcal{Z}=(\mathcal{Z}_t;\, t\geq 0)$ is assumed to be stochastically continuous, with stationary increments, and $H$-self-similar for some $H>0$. Namely, for every $c>0$,
\[
(\mathcal{Z}_{ct};\, t\geq 0)\eqd (c^H\mathcal{Z}_t;\, t\geq 0),
\]
in the sense of finite-dimensional distributions. We also assume that $\mathcal{Z}_1$ has known distribution function $\Psi$.\\

\noindent The class above includes two important examples. If $\mathcal{Z}$ is a fractional Brownian motion with Hurst index $0<H<1$, then its increments have Gaussian marginal distribution, but they are not independent unless $H=1/2$. On the other hand, if $\mathcal{Z}$ is an $\alpha$-stable L\'evy process, with $0<\alpha<2$, then $H=1/\alpha$ and the increments are independent, although the marginal law is heavy-tailed and the paths have jumps. In particular, for $\alpha\geq 1$, the paths have infinite variation.

\subsubsection{The signal $\mathcal{Y}$}

The additive signal $\mathcal{Y}=(\mathcal{Y}_t;\, t\geq 0)$ represents the perturbative part of the model. Since our estimators apply to discrete additive observations of the form~\eqref{eq:X=Y+Z}, we need a condition ensuring that the increments of $\mathcal{Y}$ become negligible after the high-frequency rescaling. We impose this condition through the variation of $\mathcal{Y}$ over uniform partitions. More precisely, we assume that there exists $p\geq 1$ such that
\begin{equation}
\label{eq:limsup_Delta_Y}
V_p \coloneqq \limsup_{n\to\infty} \sum_{k=1}^{n} \big| \mathcal{Y}_{Tk/n} - \mathcal{Y}_{T(k-1)/n} \big|^p < \infty
\end{equation}
with probability one. We will also require $Hp<1$, which ensures that the signal part has lower activity than the $H$-self-similar noise once the increments are rescaled.

\subsection{Discretization Scheme and Scaled Increments}\label{sec:disc}

\noindent As above, we assume that $\mathcal{X}$ is observed only at the regular times
$iT/n$, with $i$ a natural number and $n$ large. Define the  rescaled discrete increments
\begin{equation}
\label{eq:rescaled_increments}
X_{k} 
\coloneqq (T/n)^{-H} (\mathcal{X}_{Tk/n}-\mathcal{X}_{T(k-1)/n}),
\end{equation}
for $k\leq n $. Then we can write 
\begin{equation}
\label{eq:rescaled_increments_sum}
X_{k} = Y_{k} + \sigma Z_{k},
\end{equation}

where 
\begin{equation}\label{eq:rescaled_increments_sum2}
\begin{gathered}
Y_{k} \coloneqq (T/n)^{-H} (\mathcal{Y}_{Tk/n} - \mathcal{Y}_{T(k-1)/n})
\quad\quad \quad
Z_{k} \coloneqq (T/n)^{-H} (\mathcal{Z}_{Tk/n} - \mathcal{Z}_{T(k-1)/n}).
\end{gathered}
\end{equation}
The self-similarity of $\mathcal{Z}$ ensures that, for each $n$, the rescaled noise increments
$\bm{Z}=(Z_k;\,1\leq k\leq n)$ all have distribution function $\Psi$.

\subsection{Asymptotic Consistency and Optimal Convergence Rates}

\noindent To use the bounds from Section~\ref{sec:weighted_estimators}, it remains to control the size of the rescaled signal vector $\bm{Y}=(Y_k;\,1\leq k\leq n)$ in $\ell^p$. To this end, we first write 
\begin{equation}
\label{eq:signal_variation_expansion}
\|\bm{Y}\|_p 
= (T/n)^{-H} \left( \sum_{k=1}^n \big| \mathcal{Y}_{Tk/n} - \mathcal{Y}_{T(k-1)/n} \big|^p \right)^{1/p}. 
\end{equation}
Hence, assumption~\eqref{eq:limsup_Delta_Y} implies that $\limsup_{n\to\infty}n^{-H}\|\bm{Y}\|_p$ is finite, yielding
\begin{equation}
\label{eq:discrete_signal_bound}
\|\bm{Y}\|_p=\Oh_\p(n^H).
\end{equation}

\noindent The estimate above shows where the condition $Hp<1$ enters. Although $\|\bm{Y}\|_p$ may grow like $\Oh_\p(n^H)$ after rescaling, the bounds of Corollary~\ref{cor:CI_r12} involve the signal through $n^{-1/p}\|\bm{Y}\|_p$. Hence this contribution is negligible when $H-1/p<0$, that is, when $Hp<1$.\\

\noindent In the terminology introduced after Theorem~\ref{thm:CI_r12}, the bound~\eqref{eq:discrete_signal_bound} controls the signal contamination term.  It remains to control the empirical-to-proxy discrepancy appearing in Corollary~\ref{cor:CI_r12}, namely
\[
n^{-1/r}\langle\bm w,|\bm Z^\ua-\bm\psi|^r\rangle^{1/r}.
\]
This term depends on the law and dependence structure of the increments $Z_k$. For this reason, we now treat separately two standard examples: fractional Brownian motion, whose increments have Gaussian marginals but may be dependent, and $\alpha$-stable L\'evy processes, whose increments are independent but heavy-tailed.\\

\noindent  The two cases considered below, fractional Brownian motion and $\alpha$-stable L\'evy processes, are not meant to exhaust all possible examples. They also suggest how to treat related models, such as some generalized tempered stable processes~\cite{MR2798857} or Gaussian mean-reverting models: whenever the driving component can be decomposed into one of these noises plus an additive remainder of lower variation, the latter may be absorbed into $\mathcal Y$ without changing the scale parameter $\sigma$.\\

Corollaries~\ref{cor:fbm}-\ref{cor:stable-mad} illustrate how the same estimators apply in two different high-frequency regimes. In the fractional Brownian case, the main slowdown comes from dependence among increments. In the stable case, the increments are independent but the marginal law is heavy-tailed. In both cases, the estimators act on the rescaled increments and do not require a semimartingale or Markov structure for the full model, nor a Gaussian driving noise.

\subsubsection{Volatility estimation for fractional Brownian motion}

Suppose that the driving noise $\mathcal{Z}$ is a standard fractional Brownian motion with Hurst index $H$, where $0<H<1$. Then $\mathcal{Z}$ is $H$-self-similar and its rescaled increments have standard Gaussian distribution, so that $\Psi=\Phi$. The increments, however, are not independent unless $H=1/2$. Under the weight condition $|\psi_k|^r w_k\leq 1$, Corollary~\ref{cor:sigma-dependent-noise} and Remark~\ref{rem:fBm} give the following estimate.

\begin{cor}[Volatility estimation for fBm]
\label{cor:fbm}
Assume the continuous-time model given in~\eqref{eq:continuous_model}, where $\mathcal{Z}$ is a standard fractional Brownian motion with Hurst index $0<H<1$. Suppose that $\mathcal{Y}$ satisfies~\eqref{eq:limsup_Delta_Y} for some $1\leq p<1/H$. Let $\bm{X}$, $\bm{Y}$ and $\bm{Z}$ be  given as in \eqref{eq:rescaled_increments}-\eqref{eq:rescaled_increments_sum}. Suppose also that $\bm\psi$ is generated either by~\eqref{eq:proxies_deterministic} or by~\eqref{eq:proxies_random}, and that $\bm w=\omega(\bm\psi)$, where $\omega:\R\to[0,1]$ is continuous and satisfies
\[
\sup_{x\in\R} x^2\omega(x)<\infty.
\]
Then, for $r=1,2$, the estimator $\Sigma_r$ is consistent and satisfies
\begin{equation}
\label{eq:rate_fbm}
|\Sigma_r-\sigma|
=
\Oh_\p\bigl(
n^{-\min\{(1/p-H)r,\,1-H,\,1/2\}/r}
\bigr).
\end{equation}
\end{cor}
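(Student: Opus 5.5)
The plan is to feed the two ingredients already established into Corollary~\ref{cor:sigma-dependent-noise}, and then read off the exponents. The noise increments $Z_k=(T/n)^{-H}(\mathcal Z_{Tk/n}-\mathcal Z_{T(k-1)/n})$ are, by self-similarity and stationarity of increments of fBm, equal in law (as a vector) to $(B^H_k-B^H_{k-1})_{k\le n}$. This is a stationary centered unit-variance Gaussian sequence. Hence $\bm Z=\Phi^{-1}\circ\Phi(\bm W)$ with $\bm W=\bm Z$, so \eqref{eq:Zndef} holds with $\Psi=\Phi$. The corollary's statement depends only on the joint law of $(\bm Y,\bm Z)$ through pathwise bounds and on the law of $\bm Z$, so replacing $\bm Z$ by its distributional copy is harmless.

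Next I check the hypotheses of Corollary~\ref{cor:sigma-dependent-noise}. The integrability condition $\int|x|^{r-1}\sqrt{\Phi(1-\Phi)}\,\dd x<\infty$ holds for $r=1,2$ by Gaussian tails; the constants are the $C_1,C_2$ of Corollary~\ref{cor:expected_dependent_discrepancy}. For the weight, $\omega$ takes values in $[0,1]$, is continuous, and satisfies $\sup x^2\omega(x)<\infty$. Together with boundedness this gives $\sup|x|^r\omega(x)<\infty$ for $r=1$ as well, by splitting at $|x|\le1$. The remaining requirement is $\E[|Z|^r\omega(Z)]>0$. The statement does not list it explicitly, so I would either add it as implicit (it fails only if $\omega\equiv0$ Lebesgue-a.e., which makes the estimator undefined) or note that it is needed for $\bm\psi\pdot\bm w\neq\bm 0$ eventually. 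This nondegeneracy point is the only place I expect friction.

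The corollary then gives $|\Sigma_r-\sigma|=\Oh_\p(n^{-1/p}\|\bm Y\|_p+n^{-1/r}D_n^{1/(2r)})$. For the first term I use \eqref{eq:discrete_signal_bound}, $\|\bm Y\|_p=\Oh_\p(n^H)$, which follows from \eqref{eq:limsup_Delta_Y} and \eqref{eq:signal_variation_expansion}; this gives $\Oh_\p(n^{-(1/p-H)})=\Oh_\p(n^{-(1/p-H)r/r})$. For the second term, Remark~\ref{rem:fBm} gives $D_n=\Oh(n^{\max\{1,2H\}})$, so $n^{-1/r}D_n^{1/(2r)}=\Oh(n^{-(1-\max\{1/2,H\})/r})=\Oh(n^{-\min\{1/2,1-H\}/r})$. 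Adding the two and keeping the slower rate yields \eqref{eq:rate_fbm}. Since $p<1/H$ and $H<1$, all three exponents are positive, so consistency follows.

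The only genuinely nontrivial input is the asymptotics $\rho_H(k)\sim H(2H-1)k^{2H-2}$. Summing this gives $\sum_k(n-k)|\rho_H(k)|=\Oh(n^{2H})$ for $H>1/2$. For $H<1/2$ the sum is $\Oh(n)$, because $\sum|\rho_H(k)|<\infty$ when $2H-2<-1$. For $H=1/2$ it vanishes. This is already recorded in Remark~\ref{rem:fBm}, so the proof is essentially bookkeeping.
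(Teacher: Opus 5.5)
Your proposal is correct and follows the paper's own route. The paper likewise obtains the result by combining Corollary~\ref{cor:sigma-dependent-noise} (with $\bm W=\bm Z$, $\Psi=\Phi$), the covariance bound $D_n=\Oh(n^{\max\{1,2H\}})$ from Remark~\ref{rem:fBm}, and the signal bound \eqref{eq:discrete_signal_bound}. You are also right that the nondegeneracy condition $\E[|Z|^r\omega(Z)]>0$ is implicitly required but omitted from the statement; by continuity of $\omega$ and the full support of $\Phi$, it is equivalent to $\omega\not\equiv 0$.
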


\subsubsection{Scale estimation for stable L\'evy processes}

Suppose that the driving noise $\mathcal{Z}$ is an $\alpha$-stable L\'evy process, with $1<\alpha<2$. Then $\mathcal{Z}$ has stationary independent increments and is $1/\alpha$-self-similar, so in this case $H=1/\alpha$. The restriction $\alpha>1$ is imposed only to allow a choice of $p\geq 1$ satisfying $p<\alpha$.\\

\noindent The reference distribution $\Psi$ is heavy-tailed, and therefore the choice of weights becomes more relevant than in the Gaussian case. One could tune the decay of $\omega$ according to the tail behaviour of the quantile density, as in Theorems~\ref{thm:weak_lim_proxy_det} and~\ref{thm:weak_lim_proxy_rand}. This, however, would require a more detailed discussion of the tail asymmetry of the stable law. To keep the statement simple, we impose instead that the weight function has compact support.

\begin{cor}[Scale estimation under stable noise]
\label{cor:stable}
Assume the continuous-time model in~\eqref{eq:continuous_model}, where $\mathcal{Z}$ is an $\alpha$-stable L\'evy process with $1<\alpha<2$. Suppose that $\mathcal{Y}$ satisfies~\eqref{eq:limsup_Delta_Y} for some $1\leq p<\alpha$. Let $\bm{X}$, $\bm{Y}$ and $\bm{Z}$ be given as in~\eqref{eq:rescaled_increments}-\eqref{eq:rescaled_increments_sum}. Suppose also that $\bm\psi$ is generated either by~\eqref{eq:proxies_deterministic} or by~\eqref{eq:proxies_random}, and that
\[
\bm w=\omega(\bm\psi),
\]
where $\omega:\R\to[0,1]$ is continuous and satisfies
\[
\sup_{x \in \R} |x|^{2r} \omega(x) < \infty.
\] 
Then, for $r=1,2$, the estimator $\Sigma_r$ is consistent and satisfies
\begin{equation}
\label{eq:rate_stable}
|\Sigma_r-\sigma|
=
\Oh_\p\bigl(n^{1/\alpha-1/p}\bigr).
\end{equation}
\end{cor}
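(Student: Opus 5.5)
The plan is to combine Corollary~\ref{cor:CI_r12} with a control of the two error terms. Here $H=1/\alpha$ and the increments $Z_k$ are i.i.d.\ with the stable law $\Psi$.

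\textbf{Signal contamination.} By \eqref{eq:discrete_signal_bound}, $\|\bm Y\|_p=\Oh_\p(n^{1/\alpha})$. Hence
\[
n^{-1/p}\|\bm Y\|_p=\Oh_\p(n^{1/\alpha-1/p}),
\]
which tends to zero because $p<\alpha$. To apply Corollary~\ref{cor:CI_r12} we also need $\sup_x|x|^r\omega(x)<\infty$ and $\E[|Z|^r\omega(Z)]>0$.
\begin{itemize}[leftmargin=2em]
\item The first follows from boundedness of $\omega$ together with $\sup_x|x|^{2r}\omega(x)<\infty$.
\item The second requires $\omega$ not to vanish identically on the support of $\Psi$, which is all of $\R$. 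This should be assumed implicitly; a null weight would make the estimator undefined.
\end{itemize}
The coefficient $\|Z^{r-1}\omega(Z)\|_{L^q}/\|Z^r\omega(Z)\|_{L^1}$ is then finite, since $|x|^{r-1}\omega(x)$ is bounded.

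\textbf{Empirical-to-proxy discrepancy.} This term should be $\Oh_\p(n^{1/r-1/2})$ after normalization, i.e.\ it should contribute $n^{-1/r}\langle\bm w,|\bm Z^\ua-\bm\psi|^r\rangle^{1/r}=\Oh_\p(n^{-1/2})$. Since $1/p-1/\alpha<1-1/\alpha<1/2$ for $\alpha<2$ and $p\ge1$, this is dominated by the signal rate. That domination explains why the stated rate is only $n^{1/\alpha-1/p}$.

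I would obtain the discrepancy bound from Theorems~\ref{thm:weak_lim_proxy_det} and \ref{thm:weak_lim_proxy_rand}, which give $n^{r/2-1}\langle\bm w,|\bm Z^\ua-\bm\psi|^r\rangle=\Oh_\p(1)$. To do so I need to verify condition $(\mathbf H_{\kappa,r})$.
\begin{itemize}[leftmargin=2em]
\item Stable laws have a continuous positive density on $\R$.
\item By Proposition~\ref{prop:RV-tails}(a), with tail density $\sim c\,x^{-\alpha-1}$, the function $\eta$ is regularly varying at both endpoints with index $-1-1/\alpha$. In a one-sided totally skewed case one tail is lighter; there part (b) or a direct estimate gives $\eta\in\RV^{-1}$ at that endpoint.
\item Near $u=1$ the quantile is $\Psi^{-1}(u)\asymp(1-u)^{-1/\alpha}$. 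The weight assumption then gives $\omega(\Psi^{-1}(u))\lesssim(1-u)^{2r/\alpha}$.
\item With $\kappa(u)=\sqrt{|\log(u(1-u))|}$, the function in the third bullet of $(\mathbf H_{\kappa,r})$ behaves like
\[
(1-u)^{2r/\alpha}(1-u)^{-r/\alpha-r/2}\,\kappa^r=(1-u)^{r/\alpha-r/2}\kappa^r .
\]
This is integrable and regularly varying, since $r/\alpha-r/2>-1$. For the random proxy the same $\varpi$ bound works.
\end{itemize}
This is precisely why the exponent $2r$ appears in the hypothesis.

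\textbf{Main obstacle.} I expect this verification to be the hardest step. The issue is checking regular variation of $\eta$ and of the dominating function at both endpoints for general skewness, including the light-tailed endpoint in the skewed case. If that becomes delicate, I would fall back on Proposition~\ref{prop:expected_dependent_discrepancy} with $\rho_{ij}=\1_{\{i=j\}}$, so that $D_n=n$. This requires $\int|x|^{r-1}\sqrt{\Psi(1-\Psi)}\,\dd x<\infty$.
\begin{itemize}[leftmargin=2em]
\item For $r=1$ the integrand decays like $|x|^{-\alpha/2}$, so the integral converges only when $\alpha>2$; this fails.
\item For $r=2$ the integrand decays like $|x|^{1-\alpha/2}$; this also fails.
\end{itemize}
So the fallback is unavailable, and I would instead work through a truncated version using the bound $w_k\le C|\psi_k|^{-2r}$. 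Either route, combined with Corollary~\ref{cor:CI_r12} and taking maxima of the rates, yields \eqref{eq:rate_stable} and consistency.
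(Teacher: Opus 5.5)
Your proposal is correct and follows essentially the same route as the paper: Corollary~\ref{cor:sigma-iid} (that is, Corollary~\ref{cor:CI_r12}) with $\|\bm Y\|_p=\Oh_\p(n^{1/\alpha})$, together with an $\Oh_\p(n^{-1/2})$ empirical-to-proxy discrepancy that is dominated by the signal rate. As in the paper, that discrepancy bound comes from Theorems~\ref{thm:weak_lim_proxy_det} and~\ref{thm:weak_lim_proxy_rand} after checking $(\mathbf H_{\kappa,r})$ via Proposition~\ref{prop:RV-tails}(a) and the $|x|^{-2r}$ decay of $\omega$. Your $\kappa(u)=\sqrt{|\log(u(1-u))|}$ is as admissible as the paper's iterated-logarithm choice, and your remark about the light tail of totally skewed stable laws (handled via part~(b)) addresses a case the paper's proof passes over. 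At that endpoint the third bullet of $(\mathbf H_{\kappa,r})$ still holds, since the weight decay supplies the extra logarithmic factor needed for integrability when $r=2$. So the fallback via Proposition~\ref{prop:expected_dependent_discrepancy}, which you correctly saw fails here, is not needed.
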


\begin{proof}
Since the increments of $\mathcal{Z}$ are independent, the result follows from Corollary~\ref{cor:sigma-iid} and~\eqref{eq:discrete_signal_bound}, provided that the empirical-to-proxy discrepancy is of order $\Oh_{\p}(n^{-1/2})$ (which we show in the next paragraph). Here $H=1/\alpha$, so the signal term is of order $\Oh_\p(n^{1/\alpha-1/p})$. This is the dominant term, since $p\geq 1$ and $1<\alpha<2$ imply
\[
1/p-1/\alpha<1/2.
\]

Thus, it remains to show that the empirical-to-proxy discrepancy is of order $\Oh_{\p}(n^{-1/2})$. By Theorems~\ref{thm:weak_lim_proxy_det} and~\ref{thm:weak_lim_proxy_rand}, it suffices to check that the pair $(\omega, \Psi)$ satisfies condition $\mathbf{H}_{\kappa,r}$ for $\kappa(u)\coloneqq\max\{\log\log(e+1/u),\log\log(e+1/(1-u))\}$. Because $Z_1$ follows an $\alpha$-stable law with $\alpha \in (1, 2)$, its density exhibits power-law tail decay. By Proposition~\ref{prop:RV-tails}(a) (with $\beta = \alpha$), the quantile Jacobian $\eta = 1 / \Psi' \circ \Psi^{-1}$ varies regularly at the endpoints and, moreover, $\eta(u) \sim c' |\Psi^{-1}(u)|^{\alpha+1}$ and $\sqrt{u(1-u)} \sim c''  |\Psi^{-1}(u)|^{-\alpha/2}$ as $u \to 0$ or $1$. Consequently,
\[
\eta(u)\sqrt{u(1-u)} 
= \Oh\big( |x|^{1 + \alpha/2} \big),
\]
as $|x| \to \infty$. Since $\alpha < 2$, it follows that $r(1 + \alpha/2) < 2r$ for any $r \ge 1$. The assumption $\sup_{x \in \R} |x|^{2r} \omega(x) < \infty$ ensures that the function $u \mapsto \omega(\Psi^{-1}(u))[\kappa(u)\eta(u)\sqrt{u(1-u)}]^r$ is bounded on $(0,1)$ by a regularly varying integrable function, satisfying condition $\mathbf{H}_{\kappa,r}$ and completing the proof. 
\end{proof}

\subsubsection{MAD estimation for fractional Brownian motion and stable L\'evy processes}

We now establish the corresponding high-frequency guarantees for the MAD estimator under the two continuous-time regimes considered above. These results provide the theoretical basis for the convergence rate comparisons in Section~\ref{sec:numerical_experiments}.

\begin{cor}[MAD volatility estimation for fBm]
\label{cor:fbm-mad}
Assume the continuous-time model in~\eqref{eq:continuous_model}, where $\mathcal{Z}$ is a standard fractional Brownian motion with Hurst index $0<H<1$. Suppose that $\mathcal{Y}$ satisfies~\eqref{eq:limsup_Delta_Y} for some $1\leq p<1/H$. Let $\bm{X}$, $\bm{Y}$ and $\bm{Z}$ be given as in~\eqref{eq:rescaled_increments}-\eqref{eq:rescaled_increments_sum}. Then, the estimators $\Sigma_\MAD^\Det$ and $\Sigma_\MAD^\rnd$ are consistent and satisfy
\begin{equation}
\label{eq:rate_fbm_mad}
|\Sigma_\MAD - \sigma|
=
\Oh_\p\Big(
n^{-\min\left\{\frac{1/p - H}{1 + 1/p},\, 1 - H,\, 1/2\right\}}
\Big).
\end{equation}
\end{cor}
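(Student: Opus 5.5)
The proof reduces to Corollary~\ref{cor:mad_correlated}, once we check its hypotheses for the rescaled fBm increments and insert the signal bound \eqref{eq:discrete_signal_bound}.

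\textbf{Step 1 (noise structure).} By self-similarity and stationarity of increments, $\bm Z$ consists of the standardized increments $B^H_k-B^H_{k-1}$ in law, jointly in $k$. So $\bm Z=\bm W$ is a stationary, centered, unit-variance Gaussian sequence with covariance $\rho_H$ from Remark~\ref{rem:fBm}. Since $\Psi=\Phi$, we have $\bm Z=\Psi^{-1}\circ\Phi(\bm W)$, and $\rho_H(k)\sim H(2H-1)k^{2H-2}\to0$ because $H<1$. Condition \eqref{eq:Dini-LB-F} holds by Remark~\ref{rem:Dini-LB-F}. Indeed, $\Phi$ has a positive continuous density at $m=0$, and $\Psi_*(x)=2\Phi(x)-1$ has density $2\phi(x)>0$ near $m_*=\Phi^{-1}(3/4)$, with both continuous at their medians. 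Thus Corollary~\ref{cor:mad_correlated} applies with $D_n=\Oh(n^{\max\{1,2H\}})$ by Remark~\ref{rem:fBm}. This gives $n^{-1}\sqrt{D_n}=\Oh(n^{-\min\{1/2,1-H\}})$.

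\textbf{Step 2 (signal term).} From \eqref{eq:limsup_Delta_Y} and \eqref{eq:signal_variation_expansion}, $\|\bm Y\|_p^p=\Oh_\p(n^{Hp})$. Hence
\[
n^{-1/(p+1)}\|\bm Y\|_p^{p/(p+1)}=\Oh_\p\bigl(n^{(Hp-1)/(p+1)}\bigr)=\Oh_\p\bigl(n^{-(1/p-H)/(1+1/p)}\bigr),
\]
after dividing numerator and denominator by $p$. Since $Hp<1$, this exponent is negative, so the term vanishes.

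\textbf{Step 3 (combine).} Summing the two bounds in Corollary~\ref{cor:mad_correlated} yields \eqref{eq:rate_fbm_mad}, and consistency follows because every exponent is positive.

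The corollary is stated for $\Sigma_\MAD$ covering both versions via \eqref{eq:MAD-rnd-det}, which adds only an $\Oh_\p(n^{-1/2})$ relative factor already absorbed by the $1/2$ in the minimum. The main point needing care is one technicality. Corollary~\ref{cor:mad_correlated} is an $\Oh_\p$ statement resting on Theorem~\ref{thm:CI-MAD}, which applies only when $\Delta_\MAD<\varepsilon_\MAD$. We must check that this event has probability tending to one. It does, since $\Delta_\MAD=\oh_\p(1)$ by Steps~1--2 together with $\Ld\le\Kd$ and Theorem~\ref{thm:Kol-Gauss-dependent}. On the complement of that event the bound is vacuous, which is harmless for an $\Oh_\p$ statement.
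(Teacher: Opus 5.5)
Your proposal is correct and follows the paper's proof essentially step for step. Like the paper, you verify \eqref{eq:Dini-LB-F} for $\Phi$ and $\Psi_*$, invoke Corollary~\ref{cor:mad_correlated} with $D_n=\Oh(n^{\max\{1,2H\}})$ from Remark~\ref{rem:fBm}, substitute $\|\bm Y\|_p=\Oh_\p(n^H)$ into the signal term, and absorb the randomized normalization via \eqref{eq:MAD-rnd-det}. Your extra checks are details the paper leaves implicit, and you handle them correctly: that the rescaled increments equal fractional Gaussian noise in law jointly in $k$, and that $\Delta_\MAD<\varepsilon_\MAD$ holds with probability tending to one.
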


\begin{proof}
We invoke Corollary~\ref{cor:mad_correlated} and the signal variation bound~\eqref{eq:discrete_signal_bound}. First, note that since $\mathcal{Z}_1 \sim \Phi$, the reference distribution $\Psi = \Phi$ and its absolute deviation distribution $\Psi_*$ are continuously differentiable with densities strictly positive around their respective medians. Thus, the local Dini condition~\eqref{eq:Dini-LB-F} is satisfied by Remark~\ref{rem:Dini-LB-F}. 

By Remark~\ref{rem:fBm}, the covariance sum for standard fractional Brownian motion satisfies $D_n = \Oh(n^{\max\{1, 2H\}})$. Consequently, the empirical discrepancy term in Corollary~\ref{cor:mad_correlated} contributes
\[
n^{-1}\sqrt{D_n} 
= \Oh\big(n^{-\min\{1-H,\, 1/2\}}\big).
\]
On the other hand, substituting $\|\bm{Y}\|_p = \Oh_\p(n^H)$ from~\eqref{eq:discrete_signal_bound} into $(n^{-1}\|\bm{Y}\|_p^p)^{\frac{1}{p+1}}$ yields
\[
\big(n^{-1}\|\bm{Y}\|_p^p\big)^{\frac{1}{p+1}}
=
\Oh_\p\Big(
\big(n^{Hp - 1}\big)^{\frac{1}{p+1}}
\Big)
=
\Oh_\p\Big(
n^{-\frac{1/p - H}{1 + 1/p}}
\Big).
\]
Combining both error rates via the triangle inequality gives the stated bound for $\Sigma_\MAD^\Det$. Finally, as noted in Section~\ref{sec:MAD}, the auxiliary sample in $\Sigma_\MAD^\rnd$ introduces an $\Oh_\p(n^{-1/2})$ normalization error that is absorbed by the leading terms, completing the proof.
\end{proof}

\begin{cor}[MAD scale estimation under stable noise]
\label{cor:stable-mad}
Assume the continuous-time model in~\eqref{eq:continuous_model}, where $\mathcal{Z}$ is an $\alpha$-stable L\'evy process with $1<\alpha<2$. Suppose that $\mathcal{Y}$ satisfies~\eqref{eq:limsup_Delta_Y} for some $1\leq p<\alpha$. Let $\bm{X}$, $\bm{Y}$ and $\bm{Z}$ be given as in~\eqref{eq:rescaled_increments}-\eqref{eq:rescaled_increments_sum}. Then, the estimators $\Sigma_\MAD^\Det$ and $\Sigma_\MAD^\rnd$ are consistent and satisfy
\begin{equation}
\label{eq:rate_stable_mad}
|\Sigma_\MAD - \sigma|
=
\Oh_\p\Big(
n^{\frac{1/\alpha - 1/p}{1 + 1/p}}
\Big).
\end{equation}
\end{cor}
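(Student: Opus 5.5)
The plan is to mirror the proof of Corollary~\ref{cor:fbm-mad}, replacing Corollary~\ref{cor:mad_correlated} by the i.i.d.\ result Corollary~\ref{cor:MAD-iid}. The increments of a L\'evy process are independent, and by self-similarity with $H=1/\alpha$ the rescaled increments $Z_k$ in~\eqref{eq:rescaled_increments_sum2} are i.i.d.\ with law $\Psi$, the law of $\mathcal Z_1$. Corollary~\ref{cor:MAD-iid} then gives
\[
|\Sigma_\MAD-\sigma|=\Oh_\p\big(n^{-1/(p+1)}\|\bm Y\|_p^{p/(p+1)}+n^{-1/2}\big),
\]
provided that~\eqref{eq:Dini-LB-F} holds.

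First, I verify~\eqref{eq:Dini-LB-F}. A non-degenerate $\alpha$-stable law with $1<\alpha<2$ has a continuous density that is strictly positive on $\R$; this is a classical fact for every skewness parameter when $\alpha>1$, since then the support is the whole line. Hence $\Psi$ is continuous and has positive density near its median $m$. The law of $|Z-m|$ has density $\Psi'(m+x)+\Psi'(m-x)$ for $x>0$. This is continuous and strictly positive, so it is positive near $m_*>0$. By Remark~\ref{rem:Dini-LB-F}, condition~\eqref{eq:Dini-LB-F} holds for some $\kappa,\delta>0$.

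Second, I insert the signal bound~\eqref{eq:discrete_signal_bound}, which gives $\|\bm Y\|_p=\Oh_\p(n^{1/\alpha})$. This yields
\[
\big(n^{-1}\|\bm Y\|_p^p\big)^{1/(p+1)}=\Oh_\p\big(n^{(p/\alpha-1)/(p+1)}\big)=\Oh_\p\big(n^{\frac{1/\alpha-1/p}{1+1/p}}\big).
\]
Since $p<\alpha$, this exponent is negative, which gives consistency.

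Third, I show that the signal term dominates the $n^{-1/2}$ term, that is,
\[
\frac{1/p-1/\alpha}{1+1/p}<\frac12 .
\]
Multiplying out, this is equivalent to $1/p-2/\alpha<1$. That inequality holds because $1/p\le 1$ and $2/\alpha>0$. The rate is then exactly as claimed for $\Sigma_\MAD^\Det$.

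For $\Sigma_\MAD^\rnd$, I use Remark~\ref{rem:MAD-rnd-det}. The ratio to $\Sigma_\MAD^\Det$ is $1+\Oh_\p(n^{-1/2})$, so the extra error is $\Oh_\p(n^{-1/2})$, which is absorbed into the leading term. The only step needing care is the positivity of the stable density near both medians. I expect it to be the main (minor) obstacle, and I handle it by citing the full-support property for $\alpha\in(1,2)$ stated above.
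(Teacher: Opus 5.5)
Your proposal is correct and follows essentially the same route as the paper: apply Corollary~\ref{cor:MAD-iid}, check \eqref{eq:Dini-LB-F} via the positive stable density, plug in $\|\bm Y\|_p=\Oh_\p(n^{1/\alpha})$, and verify that the resulting rate is slower than $n^{-1/2}$; the paper does that last step via $(1-p/\alpha)/(p+1)<1/(p+1)\le 1/2$, which is equivalent to your rearrangement. Your explicit treatment of $\Sigma_\MAD^\rnd$ through Remark~\ref{rem:MAD-rnd-det}, and of the density of $|Z-m|$, just spells out what the paper leaves implicit.
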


\begin{proof}
Since the increments of $\mathcal{Z}$ are independent, we apply Corollary~\ref{cor:MAD-iid}. Since $\alpha>1$, then $\Psi$ has a $C^\infty$ positive density, making $\Psi$ and $\Psi_*$ satisfy condition~\eqref{eq:Dini-LB-F}. Thus, the empirical-to-proxy discrepancy is bounded by the Dvoretzky-Kiefer-Wolfowitz rate $\Oh_\p(n^{-1/2})$. 

Because $\mathcal{Z}$ is $1/\alpha$-self-similar, we have $H = 1/\alpha$, and the discrete variation bound~\eqref{eq:discrete_signal_bound} gives $\|\bm{Y}\|_p = \Oh_\p(n^{1/\alpha})$. Substituting this into Corollary~\ref{cor:MAD-iid} yields
\[
\big(n^{-1}\|\bm{Y}\|_p^p\big)^{\frac{1}{p+1}}
=
\Oh_\p\Big(
\big(n^{p/\alpha - 1}\big)^{\frac{1}{p+1}}
\Big)
=
\Oh_\p\Big(
n^{\frac{1/\alpha - 1/p}{1 + 1/p}}
\Big).
\]
It remains to compare this signal contamination rate with the $\Oh_\p(n^{-1/2})$ DKW rate. Because $1 < \alpha < 2$ and $1 \leq p < \alpha$ by assumption, we have
\[
\frac{1 - p/\alpha}{p+1} < \frac{1}{p+1} \leq \frac{1}{2}.
\]
Multiplying by $-1$ shows that $\frac{p/\alpha - 1}{p+1} > -1/2$. Hence, the polynomial decay of the signal contamination term is strictly slower than $n^{-1/2}$ and dominates the asymptotic error. This establishes the rate~\eqref{eq:rate_stable_mad} for both deterministic and randomized versions.
\end{proof}

\section{Numerical Experiments}
\label{sec:numerical_experiments}

In this section, we present a series of controlled numerical experiments designed to validate the non-asymptotic concentration properties and empirical robustness of the proposed order-statistics framework. All simulations are implemented in the Julia programming language and the code is available in the repository~\cite{GitHub}. For simplicity, throughout we consider the setting of scale parameter estimation in high-frequency observations of a continuous-time process of Section~\ref{sec:sde_applications}, which leave room for dependence between signal and noise as well as across the noise. For most experiments, except for Subsection~\ref{subsec:asymptotic_rates}, where the convergence rates (as  a function of $n$) are analysed, $n$ is fixed at $10^4$.

\subsection{Simulation Mechanics and Signal-Noise Profiles}
\label{subsec:sim_mechanics}

Consider the additive model  \eqref{eq:rescaled_increments_sum}, where $\mathcal{Z}$ is a standard Brownian motion, making $\bm{Z}$ an i.i.d. sequence of standard normal random variables. The term $\bm{Y}$, constituted by i.i.d. $\alpha$-stable variables with scale $n^{1/2-1/\alpha}$, for some stability parameter $\alpha\in(0,2)$ and skewness parameter $\beta\in[-1,1]$. Throughout, we will consider a few couplings between $\bm{Z}$ and $\bm{Y}$:
\begin{itemize}[leftmargin=2em]
\item[(I)] $\bm{Y}$ is independent of $\bm{Z}$, 
\item[(II)] $Y_k=\Phi_\alpha^{-1}(\Phi(\pm Z_k))$ for $k\in\br{n}$, where $\Phi_\alpha$ is the distribution function of $Y_k$,
\item[(III)] the elements of $\bm{Y}$ are independent of those of $\bm{Z}$, but their ranks are either matched or inversely matched.\\
\end{itemize}

The second and third couplings stress the dependence between $\bm{Y}$ and $\bm{Z}$, with the second one forcing full positive or negative dependence, while the third coupling is an approximation of the second one, in cases where $\Phi_\alpha^{-1}$ is not numerically accessible (i.e., for $\alpha\ne 1$). Furthermore, to stress the resilience of our estimators, since $\mathcal{Y}$ has paths of finite $p$-variation with finite $p$-moment for $p>\alpha$, but not $p\le \alpha$, we will consider $\alpha=1$ and $\beta=0$ (Cauchy process) as well as $\alpha=1.75$ and $\beta=0.5$. For ease of reference, the case $\alpha=1$ is referred to as Cauchy signal (for which $\Phi_1^{-1}(x)=\tan(\pi x-\pi/2)$, $x\in(0,1)$) and the case $\alpha=1.75$ is simply called $\alpha$-stable signal in the remainder of this experiment.\\

\noindent We produce $M=10^5$ samples of the previous setting with $\sigma=2\pi$ and compute the estimators $\Sigma_r^\Det$ and $\Sigma_r^\rnd$ for $r\in\{1,2\}$ (throughout this section, we use the weight function $\omega_4$ from Table~\ref{tab:Gaussian-breakdown}) as well as the MAD estimators $\Sigma_\MAD^\Det$ and $\Sigma_\MAD^\rnd$. The mean and standard deviation of these estimators are displayed in Table~\ref{tab:Gaussian-breakdown}, while the estimated densities displayed in Figure~\ref{fig:sampling_distributions} (which exclude $r=2$ for visibility) shows a remarkable similarity in behaviour between the MAD estimator $\Sigma_\MAD^\Det$ and $\Sigma_1^\rnd$, both having similar variances, doubling the variance of the estimators $\Sigma_1^\Det$ and $\E[\Sigma_1^\rnd|\bm{X}]$ (estimated via Monte Carlo, averaging over $K=100$ samples of the random proxy $\bm\psi^\rnd$) and having half as much as $\Sigma_\MAD^\rnd$, as predicted by Theorems~\ref{thm:weak_lim_proxy_det} and~\ref{thm:weak_lim_proxy_rand}. All estimators have comparable biases, with the estimators $\Sigma_\MAD^\Det$, $\Sigma_1^\rnd$ and $\E[\Sigma_1^\rnd|\bm{X}]$ attaining slightly smaller biases, see Table~\ref{tab:sampling_distributions}.\\

\begin{figure}[ht]
\centering
\includegraphics[width=0.48\textwidth]{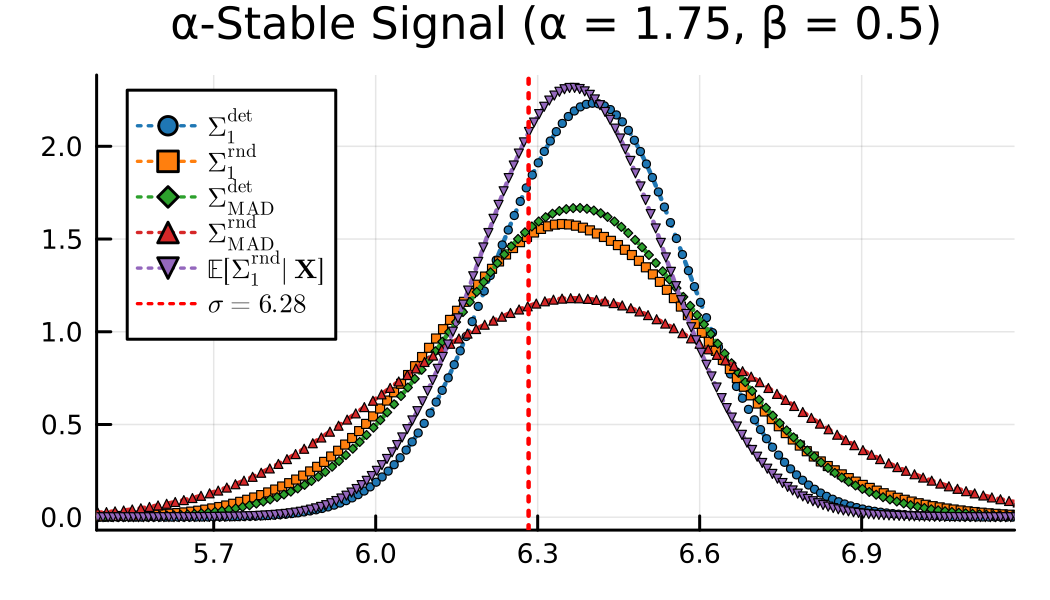}
\includegraphics[width=0.48\textwidth]{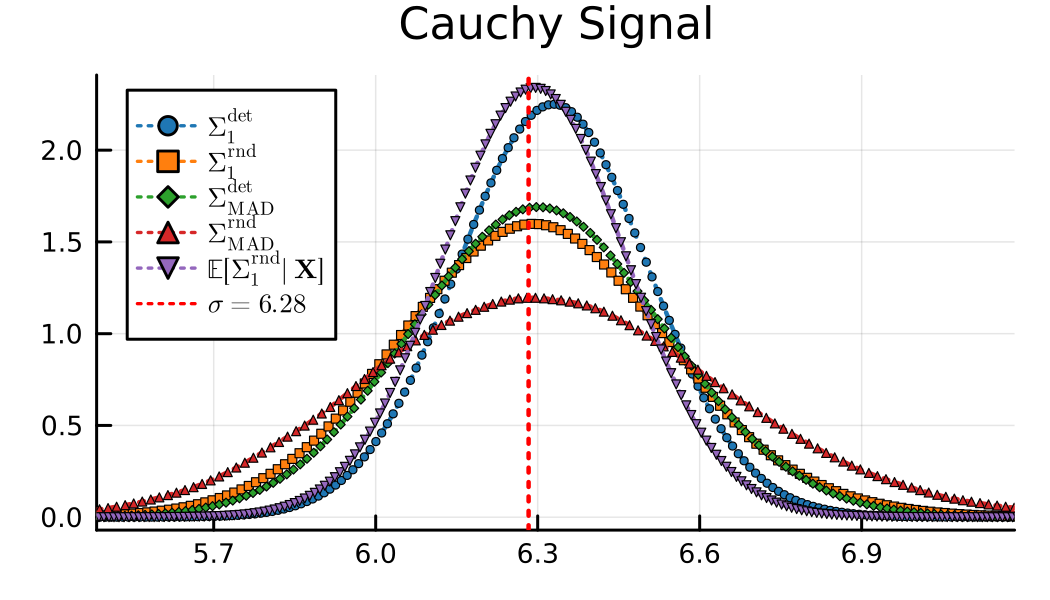}
\caption{Empirical sampling distributions (frequency histograms) of the scale estimators $\Sigma_1$ and $\Sigma_\MAD$ of $M = 10^5$ Monte Carlo trials with sample size $n = 10^4$. The vertical red dashed line indicates the true scale $\sigma=2\pi$.}
\label{fig:sampling_distributions}
\end{figure}

\begin{table}[ht]
\centering
\begin{tabular}{|c|c|c|c|c|c|c|c|c|c|}
\hline
Signal 
    & Stat.
    & $\Sigma_1^\Det$
    & $\Sigma_1^\rnd$
    & $\E[\Sigma_1^\rnd|\bm{X}]$
    & $\Sigma_2^\Det$
    & $\Sigma_2^\rnd$
    & $\E[\Sigma_2^\rnd|\bm{X}]$
    & $\Sigma_\MAD^\Det$
    & $\Sigma_\MAD^\rnd$
\\[3pt] \hline
\multirow{2}{*}{$\alpha$-stable} 
    & Bias
    & 4.18\%
    & {\color{red}3.67\%}
    & {\color{red}3.70\%}
    & 4.22\%
    & 4.08\%
    & 4.1\%
    & {\color{red}3.54\%}
    & {\color{red}3.66\%}
\\[3pt] \cline{2-10}
    & SD
    & {\color{red}5.40\%}
    & 7.66\%
    & {\color{red}5.13\%}
    & {\color{red}4.98\%}
    & 7.03\%
    & {\color{red}5.01\%}
    & 7.36\%
    & 10.4\%
\\[3pt] \hline 
\multirow{2}{*}{Cauchy} 
    & Bias
    & 1.23\%
    & {\color{red}0.75\%}
    & {\color{red}0.77\%}
    & 1.32\%
    & 1.18\%
    & 1.20\%
    & {\color{red}0.83\%}
    & {\color{red}0.95\%}
\\[3pt] \cline{2-10}
    & SD
    & {\color{red}5.35\%}
    & 7.64\%
    & {\color{red}5.10\%}
    & {\color{red}5.28\%}
    & 7.18\%
    & {\color{red}5.28\%}
    & 7.35\%
    & 10.4\%
\\[3pt] \hline
\end{tabular}

\vspace{6pt}
\caption{The table shows the Monte Carlo bias and standard deviation, corresponding to the graphs displayed in Figure~\ref{fig:sampling_distributions}. The ``significantly smaller'' values of each row are high-lighted in red. The Monte Carlo estimation of $\E[\Sigma_1^\rnd|\bm{X}]$ was done by averaging $K=100$ copies of $\Sigma_1^\rnd$ with independent draws for $\bm\psi^\rnd$ for each sample of $\bm{X}$.}
\label{tab:sampling_distributions}
\end{table}

\noindent The information found in Table~\ref{tab:sampling_distributions} suggests that the overall best estimator is the Monte Carlo estimate of $\E[\Sigma_r^\rnd|\bm{X}]$, with the only downside being the complexity of its estimation, which is typically $K$ times more computationally expensive than all other estimators. The biases of all estimators are quite similar, with a slight edge in favour of the estimators $\Sigma_r^\rnd$ and MAD. On the other hand, the standard deviations of $\Sigma_r^\Det$ and $\E[\Sigma_r^\rnd|\bm{X}]$ are approximately a factor of $1/\sqrt{2}$ smaller than those of all other estimators (except $\Sigma_\MAD^\rnd$, whose standard deviation is twice as large), as observed in Figure~\ref{fig:sampling_distributions}. Moreover, all estimators appear to have a slight positive bias or skew, which is somewhat expected in estimators of positive quantities. This leaves the open question of whether there exists some function $\varphi$ for which, for instance, $\E[\varphi(\Sigma_r^\rnd)|\bm{X}]$ has a reduced bias and approximately the same standard deviation. (For instance, this could be attempted in practice by combining $\Sigma_1^\rnd$, $\Sigma_1^\Det$ and $\Sigma_\MAD^\Det$ using the control variates method, but obtaining theoretical guarantees appears to be a hard problem.) In addition, we observe all estimators to have a better performance in the presence of a Cauchy signal, which is extremely heavy tailed but of almost finite variation $p\approx 1$, in contrast to the $\alpha$-stable signal with finite mean but rougher paths $p\approx \alpha=1.75$.\\

The previous figure and table highlight a core structural distinction within our framework: the choice between the deterministic proxy $\bm\psi^\Det$ and the random proxy $\bm\psi^\rnd$. While our theoretical results include concentration inequalities and asymptotic analyses for the terms arising in such concentration inequalities, it is of interest to understand the fluctuations of $\Sigma_r^\rnd$, given $\bm{X}$, which tends to fluctuate (as a function of the auxiliary sample $\bm\xi$) about $\Sigma_r^\Det$. To illustrate this phenomenon, for a single sample of $\bm{X}$, we plot both $\Sigma_r^\Det$ and several samples of $\Sigma_r^\rnd$ (corresponding to $K=10^5$ draws of $\bm\psi^{\rnd}$). The empirical results of this conditional profiling are displayed in Figure~\ref{fig:proxy_fluctuations}.

\begin{figure}[ht]
\centering
\includegraphics[width=.95\textwidth]{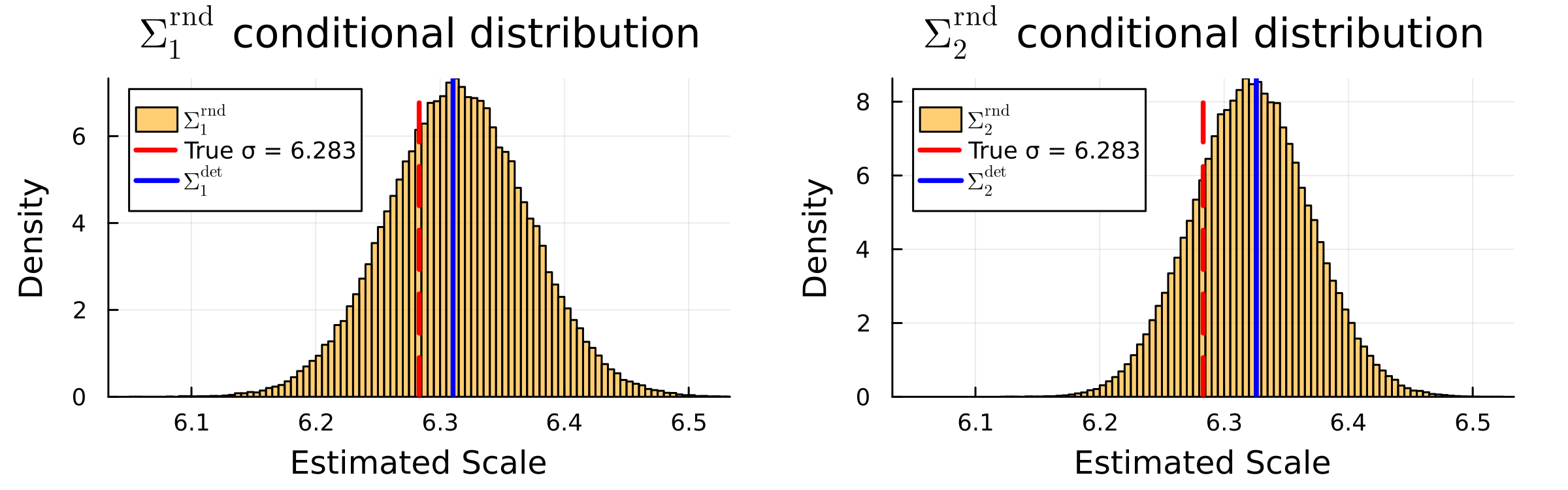}
\caption{Conditional empirical distributions of the random proxy estimator $\Sigma_r^\rnd$ under $K = 10^5$ auxiliary resamples, conditioned on a single realization of $\bm{X}$ with Cauchy signal $\bm{Y}$. In both panels, the orange histograms represent the continuous conditional fluctuation of the random proxy estimates, the solid blue vertical line marks the estimate $\Sigma_r^\Det$, and the dashed red vertical line indicates the true baseline scale $\sigma = 2\pi$.}
\label{fig:proxy_fluctuations}
\end{figure}

\subsection{Asymptotic Rates of Convergence and Robustness}
\label{subsec:asymptotic_rates}

Our next goal is to numerically verify the theoretical bounds of Corollaries~\ref{cor:fbm}-\ref{cor:stable-mad}. We evaluate the asymptotic efficiency and convergence rates of the proposed estimators across increasing sample horizons. This experiment serves as a direct quantitative verification of the sharpness of the non-asymptotic concentration bounds derived in prior sections. Specifically, we analyse how the rate of decay of the mean absolute estimation error is affected by the signal activity and the noise correlation structure.\\

We run $M=100$ Monte Carlo simulations of independent trials per configuration. The true noise scale parameter is set to $\sigma = 2\pi$ and the system is evaluated on the dyadic sequence of sample sizes $n \in \{2^7, \dots, 2^{20}\}$. We consider two signal and noise configurations as follows. In both configurations $\bm{Y}$ is the (scaled) sum of a fractional Gaussian noise with Hurst index $1/\gamma$ and an independent sequence of i.i.d. $\gamma$-stable random variables, both independent of $\bm{Z}$. In the first example, $\bm{Z}$ is itself a fractional Gaussian noise with index $H>1/\gamma$ (as in Corollaries~\ref{cor:fbm} and~\ref{cor:fbm-mad}). In the second example, $\bm{Z}$ is an independent sequence of i.i.d. symmetric (i.e., zero skewness) $\alpha$-stable random variables (as in Corollaries~\ref{cor:stable} and~\ref{cor:stable-mad}) with stability index $\alpha=1/H>\gamma$.\\

We track the Mean Absolute Error (MAE) of the scale estimators across all grid sizes. For the first example we consider the estimators $\Sigma_1^\Det$, $\E[\Sigma_1^\rnd|\bm{X}]$ and $\Sigma_\MAD^\Det$ (other estimators are excluded for visibility). On the other hand, for the second example, since $\Psi^{-1}$ is not easily computable, we consider only $\E[\Sigma_1^\rnd|\bm{X}]$ and $\E[\Sigma_\MAD^\rnd|\bm{X}]$, estimated using $K=100$ independent copies of the random proxy $\bm\psi^\rnd$. To extract the empirical asymptotic rates of convergence, we map the results onto a log-log coordinate space and execute an Ordinary Least Squares (OLS) linear regression mapping (using the largest sample sizes for which the Monte Carlo estimates appear to have reached linearity in Figure~\ref{fig:convergence_rates}):
\begin{equation}
\log(\mathrm{MAE}_n) 
= \beta_0 + \beta_1 \log(n) + \ve_n,
\end{equation}
where $\mathrm{MAE}_n$ denotes the mean absolute error of a given estimator with sample size $n$ and the OLS scaling coefficient $\beta_1$ reveals the empirical rate exponent. The results are summarized in Table~\ref{tab:convergence_rates} and illustrated in Figure~\ref{fig:convergence_rates}.

\begin{table}[ht]
\centering
\begin{tabular}{|c|c|c|c|c|c|c|c|}
\hline
Noise
    & $H=1/\alpha$
    & UB for $\Sigma_1$
    & UB for $\Sigma_\MAD$
    & $\Sigma_1^\Det$
    & $\E[\Sigma_1^\rnd|\bm{X}]$
    & $\Sigma_\MAD^\Det$
    & $\E[\Sigma_\MAD^\rnd|\bm{X}]$
\\ \hline
\multirow{3}{*}{fGn} 
    & $H=.3$
    & .469
    & .265
    & .581
    & .578
    & .543
    & --
\\ \cline{2-8}
    & $H=.5$
    & .269
    & .152
    & .523
    & .566
    & .507
    & --
\\ \cline{2-8}
    & $H=.7$
    & .069
    & .039
    & .165
    & .196
    & .156
    & --
\\ \hline 
\multirow{3}{*}{$\alpha$-stable} 
    & $\alpha=1.8$
    & .214
    & .121
    & --
    & .414
    & --
    & .434
\\ \cline{2-8}
    & $\alpha=1.6$
    & .144
    & .082
    & --
    & .219
    & --
    & .236
\\ \cline{2-8}
    & $\alpha=1.4$
    & .055
    & .031
    & --
    & .071
    & --
    & .082
\\ \hline
\end{tabular}
\vspace{6pt}
\caption{The table shows the Monte Carlo bias decay rate as well as the theoretical bounds (derived via Corollaries~\ref{cor:fbm}-\ref{cor:stable-mad}) for this rate of decay. More precisely, for each applicable estimator $\hat\sigma$, the table shows the best Monte Carlo fit power $\wp$ such that $|\E[\hat\sigma]-\sigma|\approx Cn^{-\wp}$ as a function of the sample size $n$.}
\label{tab:convergence_rates}
\end{table}

\begin{figure}[htbp]
    \centering
    \includegraphics[width=.49\textwidth]{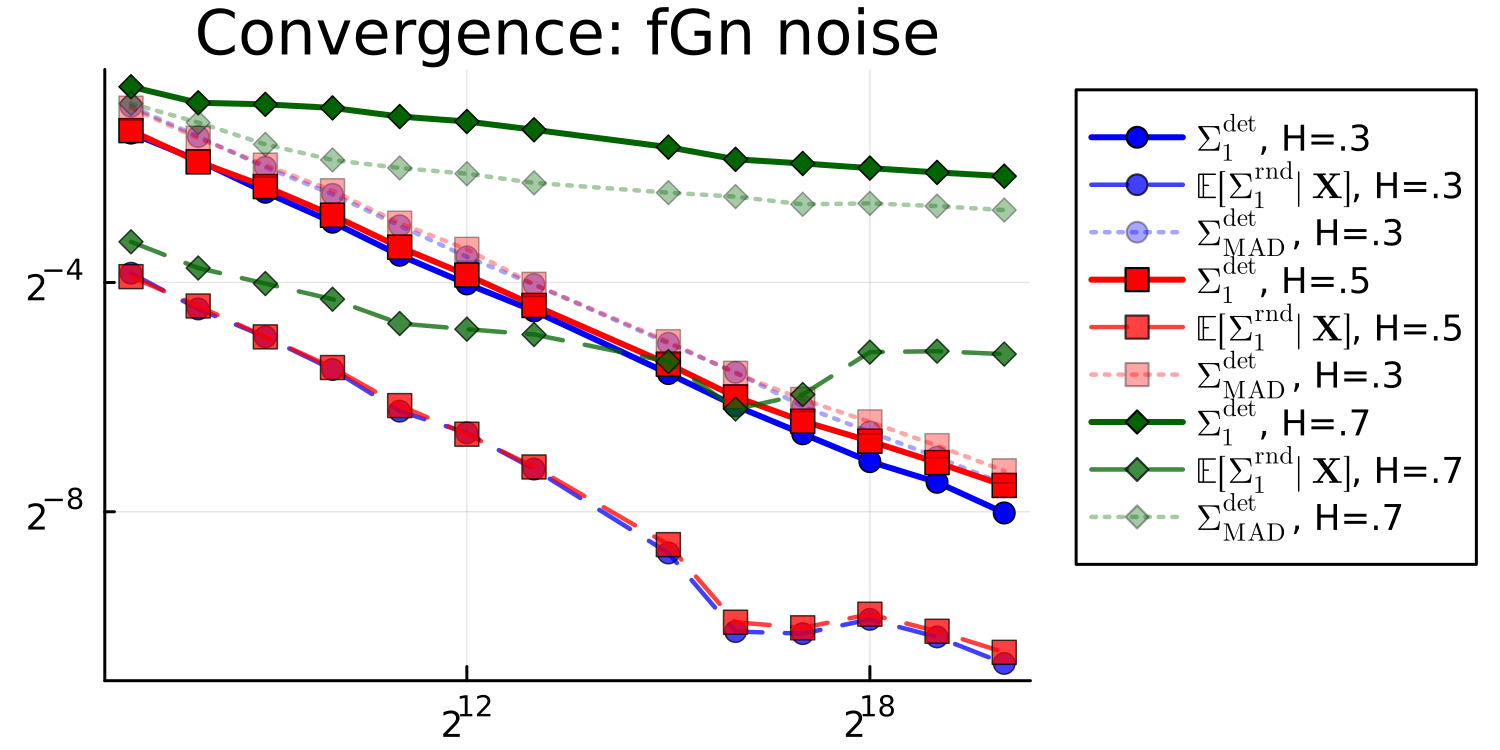}\includegraphics[width=.49\textwidth]{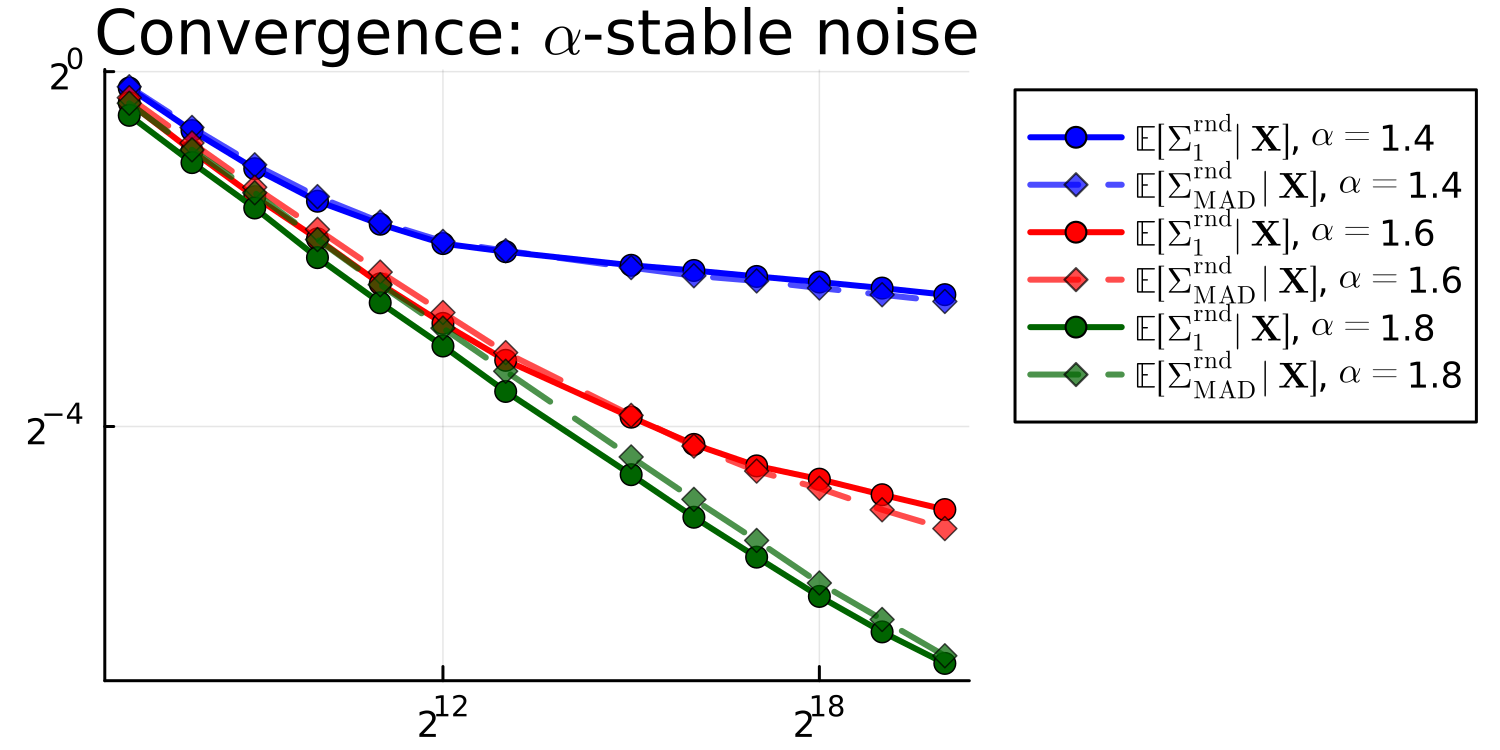}
    \caption{Log-log convergence trajectories of the Mean Absolute Error (MAE) as a function of the dyadic sample dimension $n \in \{2^7,\ldots, 2^{20}\}$ under a baseline noise scale $\sigma = 2\pi$ and $\gamma$-stable signal with stability index $\gamma=1.3$ and skewness $\beta=0.5$. The left (resp. right) panel displays the rates for the case where the noise is a fractional Gaussian noise with index $H$ (resp. i.i.d. $\alpha$-stable with index $\alpha=1/H$) with solid, dashed and dotted (resp. solid and dashed) lines respectively corresponding to $\Sigma^\Det_1$, $\E[\Sigma^\rnd_1|\bm{X}]$ and $\Sigma^\Det_\MAD$ (resp. $\E[\Sigma^\rnd_1|\bm{X}]$ and $\E[\Sigma^\rnd_\MAD|\bm{X}]$).}
    \label{fig:convergence_rates}
\end{figure}

The results presented in Table~\ref{tab:convergence_rates} and Figure~\ref{fig:convergence_rates} demonstrate that all estimators outperform the theoretical bounds established in Section~\ref{sec:sde_applications}. This validates our theoretical guarantees and reinforces the conjecture that the estimators are more efficient than what our current theory predicts. 

\section{Conclusion}
\label{sec:conclusion}

\noindent In this manuscript, we have developed a unified, fully non-parametric mathematical framework for estimating the scale of additive noise in high-dimensional and high-frequency settings. By mapping the additive structure entirely into the spatial domain of order statistics, we have strictly circumvented the catastrophic loss of sparsity induced by classical linear filters, such as the discrete wavelet transform \cite{wavelet_shrinkage, mallat1999wavelet}, and bypassed the fragility of temporal thresholding techniques typically used in stochastic differential equations \cite{MR2770909, MR4140022}.\\

\noindent  Our core theoretical achievement lies in the formulation of generalized $L^r$-weighted concentration inequalities, which elegantly and deterministically decouple the unnormalized structural variation of the sparse signal from the empirical-to-proxy discrepancy of the latent noise. To control this discrepancy without assuming light tails or independence, we introduced the variance-stabilizing methodology of density-matched weights. Whether generating proxies deterministically via the inverse cumulative distribution function or randomly via independent simulation, this weighting mechanism aggressively penalizes boundary singularities, annihilating exploding Jacobians associated with heavy-tailed reference laws~\cite{MR4028181}. \\

\noindent  Consequently, we rigorously proved that our spatial order-statistic estimators that reach or approach the optimal $\Oh(n^{-1/2})$ convergence speed under i.i.d. noise and even under heavily correlated Gaussian noise, wherein we control long-term memory intrinsic to fractional temporal correlation via Gebelein's inequality \cite{MR7220, MR2387368}. This geometric decoupling mathematically guarantees robust, threshold-free volatility recovery across a vast landscape of corrupted physical and econometric systems, maintaining the high finite-sample breakdown resilience championed in classical robust statistics \cite{MR161415, hampel1986robust}.

\appendix

\section{Proofs of auxiliary lemmas and main results}
\label{app:Riemann-Poisson}

This appendix contains the proofs of Lemmas~\ref{lem:Riemann-sums} and~\ref{lem:Poisson-sums} and Theorems~\ref{thm:weak_lim_proxy_det},~\ref{thm:weak_lim_proxy_rand} and~\ref{thm:Kol-Gauss-dependent}.

\begin{proof}[Proof of Lemma~\ref{lem:Riemann-sums}]
By symmetry, we may assume that $f_n$, $f$ and $g$ vanish on $[1/2,1)$. Also, by~\cite[Thm.~1.5.3]{MR1015093}, we may take $g$ to be non-increasing, after replacing it by an integrable non-increasing upper bound in $\RV_0^{-\alpha_0}$ if necessary.\\

\noindent Fix $\ve>0$ and let $0<\delta<1/2$. We split $(0,1/2]$ into the boundary part $B_\delta\coloneqq(0,\delta)$ and the compact bulk $C_\delta\coloneqq[\delta,1/2]$. For two Lebesgue integrable functions $h_1$ and $h_2$, set
\[
\Delta_n(h_1,h_2)
\coloneqq
\left|
\frac{1}{n}
\sum_{k=1}^{\lceil n/2\rceil}
h_1\left(\frac{k}{n+1}\right)
-
\int_0^{1/2}h_2(x)\dd x
\right|.
\]
We write $\Delta_n^A\coloneqq\Delta_n(f_n,f)$ and, for the two pieces of the interval,
\[
\Delta_{n,\delta}^B
\coloneqq
\Delta_n(f_n\1_{B_\delta},f\1_{B_\delta}),
\qquad
\Delta_{n,\delta}^C
\coloneqq
\Delta_n(f_n\1_{C_\delta},f\1_{C_\delta}).
\]

Since $|f_n|\leq g$ and $g$ is non-increasing, we have
\[
\begin{aligned}
\Delta_{n,\delta}^B
&\leq
\frac{n+1}{n}
\sum_{k=1}^{\lceil\delta n\rceil-1}
\int_{(k-1)/(n+1)}^{k/(n+1)}g(x)\dd x
+
\int_{B_\delta}g(x)\dd x\\
&\leq
\frac{n+1}{n}\int_0^\delta g(x)\dd x
+
\int_0^\delta g(x)\dd x .
\end{aligned}
\]

On the other hand,
\[
\begin{aligned}
\Delta^C_{n,\delta}
&\leq
\frac{1}{n}
\sum_{k=\lceil\delta n\rceil}^{\lceil n/2\rceil}
\left|
f_n\left(\frac{k}{n+1}\right)
-
f\left(\frac{k}{n+1}\right)
\right| +
\left|
\frac{1}{n}
\sum_{k=\lceil\delta n\rceil}^{\lceil n/2\rceil}
f\left(\frac{k}{n+1}\right)
-
\int_\delta^{1/2}f(x)\dd x
\right|.
\end{aligned}
\]
The first term tends to zero as $n$ tends to infinity, because $f_n$ converges uniformly to $f$ on $C_\delta$. The second term also tends to zero, since $f$ is locally Riemann integrable. Therefore,
\[
\limsup_{n\to\infty}\Delta_n^A
\leq
\lim_{\delta\da 0}
\limsup_{n\to\infty}
\left(
\Delta_{n,\delta}^B+\Delta_{n,\delta}^C
\right)
=0,
\]
which proves the result.
\end{proof}

\begin{proof}[Proof of Lemma~\ref{lem:Poisson-sums}]
Note that $V_{n,k} \sim \Beta(k, n-k+1)$, so
\[
\frac{1}{n}\sum_{k=1}^n\E[f_n(V_{n,k})]
= \int_0^1 f_n(x) \sum_{j=1}^n \binom{n-1}{j-1} x^{j-1}(1-x)^{n-j} \dd x
=\int_0^1 f_n(x)\dd x.
\]
Thus, the result follows from Lebesgue's dominated convergence theorem.
\end{proof}

\begin{proof}[Proof of Theorem~\ref{thm:weak_lim_proxy_det}]
Let $\bm{V}=\{V_k;\,k\in\br{n}\}$ be an i.i.d. sample of standard uniform random variables. Then $\bm{Z}=\Psi^{-1}(\bm{V})$ and $\bm{Z}^\ua=\Psi^{-1}(\bm{V}^\ua)$. Let $v_k\coloneqq k/(n+1)$, so that $\psi_k=\Psi^{-1}(v_k)$. Since
\[
\lim_{u\da 0}\min\{\kappa(u),\kappa(1-u)\}=\infty,
\]
we may use~\cite[Cor.~4.3.1]{MR815960} to construct standard Brownian bridges $\beta_n$ such that, in probability,
\begin{equation}
\label{eq:empirical-to-bb}
\limsup_{n\to\infty}
\sup_{\frac{1}{n+1}\le u\le \frac{n}{n+1}}
\frac{|\mathcal{V}_n(u)-\beta_u|}{\kappa(u)\sqrt{u(1-u)}}=0,
\end{equation}
where $\mathcal{V}_n(u)\coloneqq \sqrt{n}(V^\ua_{\lceil un\rceil}-u)$ is the associated empirical quantile process.\\

\noindent Fix $0<\delta<1/2$. Under our assumptions, $\Psi^{-1}$ is continuously differentiable on $[\delta,1-\delta]$ with positive derivative $\eta$. Thus, by~\eqref{eq:empirical-to-bb} and the mean value theorem, the process
\[
\mathcal{Z}_n(u)
\coloneqq
\sqrt{n}\bigl(Z^\ua_{\lceil un\rceil}-\Psi^{-1}(u)\bigr)
\]
converges almost surely and uniformly on
$[\delta,1-\delta]$ to $\beta\eta$. Hence, $\widetilde{\mathcal{Z}}_n(u)\coloneqq\sqrt{n}(Z^\ua_{\lceil un\rceil}-\psi_{\lceil un\rceil})$ converges to $\beta\eta$ since, by the mean value theorem, we have
\begin{align*}
\sup_{\delta\le u\le 1-\delta}
|\widetilde{\mathcal{Z}}_n(u)-\mathcal{Z}_n(u)|
&=\sqrt{n}
\sup_{\delta\le u\le 1-\delta}
\Pigl|\Psi^{-1}(u)-\Psi^{-1}\Pigl(\frac{\lceil un\rceil}{n+1}\Pigr)\Pigr|  \le\frac{1}{\sqrt{n}}
\sup_{\delta\le u\le 1-\delta}|\eta(u)|,
\end{align*}
which is finite and tends to zero as $n$ tends to infinity. Therefore,
$\widetilde{\mathcal Z}_n$ also converges almost surely and uniformly to
$\beta\eta$ on $[\delta,1-\delta]$.\\

\noindent Denote $\varpi=\omega\circ\Psi^{-1}$ and define the following linear functionals on $D[\delta,1-\delta]$
\[
\mathcal{I}^\delta_n:
f\mapsto
\frac{1}{n}\sum_{k=\lceil n\delta\rceil}^{\lceil n(1-\delta)\rceil}\varpi(v_k)|f(v_k)|^r 
\quad\quad\quad\quad\quad
\mathcal{I}^\delta:f\mapsto\int_{\delta}^{1-\delta} \varpi(u)|f(u)|^r\dd u.
\]
Then, since $\widetilde{\mathcal{Z}}_n$ converges uniformly to $\beta\eta$ on $[\delta,1-\delta]$, Lemma~\ref{lem:Riemann-sums} implies that with probability one
\[
S_n^\delta
\coloneqq\mathcal{I}_n^\delta(\widetilde{\mathcal{Z}}_n)
\to\mathcal{I}^\delta(\beta\eta)
=\int_\delta^{1-\delta}\varpi(u)\eta(u)^r|\beta_u|^r\dd u\eqqcolon S^\delta,
\]
as $n$ tends to infinity. The case where $\omega\circ\Psi^{-1}$ is supported on a compact subset of $(0,1)$ follows easily. Hence, it remains to consider the other situation.\\

\noindent To complete the proof, it remains to extend the previous weak limit to the whole interval $(0,1)$. Define  
\[
S_n\coloneqq \lim_{\delta\da 0}S_n^{\delta}
\quad\quad\text{and}\quad\quad
S\coloneqq \lim_{\delta\da 0}S^{\delta},
\]
where $S$ is well defined by pathwise monotone convergence. Moreover, $S<\infty$ with probability one, since by assumption
\[
\E[S]
=
\E[|N(0,1)|^r]
\int_0^1
\varpi(u)\eta(u)^r[u(1-u)]^{r/2}\dd u
\le
\E[|N(0,1)|^r]
\int_0^1\varphi(u)\dd u
<\infty.
\] Thus, it remains to show that
\[
\lim_{\delta\da 0}\limsup_{n\to\infty}\p[|S_n-S_n^\delta|>\ve]=0,
\]
for every $\ve>0$.\\

\noindent 
By~\eqref{eq:empirical-to-bb}, the law of the iterated logarithm for a Brownian bridge and the condition on $\kappa$,
\begin{equation}
\label{eq:M_V}
M_n
\coloneqq
\sup_{\frac{1}{n+1}\le u\le \frac{n}{n+1}}
\frac{|\mathcal V_n(u)|}
{\kappa(u)\sqrt{u(1-u)}}
=
\Oh_\p(1).
\end{equation}

\noindent Applying the mean value theorem to $\Psi^{-1}$, we find $W_{k}$ between $V^\ua_{k}$ and $v_{k}$ such that
\begin{equation*}
Z^\ua_{k} - \psi_k 
=\Psi^{-1}(V^\ua_{k}) 
    - \Psi^{-1}(v_{k}) 
= \eta(W_{k}) (V^\ua_{k} - v_{k}).
\end{equation*}
Next, by~\cite[Ineq.~1~\&~2,~p.~415]{MR3396731}, for every sufficiently small $\ve>0$, there exists a constant $0<c_\ve<1$ such that the event
\[
\mathcal{E}_{n,\ve}
\coloneqq
\big\{
c_\ve v_k
\leq
V_k^\ua
\leq
c_\ve^{-1}v_k,
\quad
c_\ve(1-v_k)
\leq
1-V_k^\ua
\leq
c_\ve^{-1}(1-v_k),
\quad
\text{for all }k\in\br n
\big\}.
\]
satisfies $\inf_{n\in\N} \p[\mathcal{E}_{n,\ve}] 
\ge 1 - \ve. $ Since $\eta$ varies regularly at $0$ and $1$, and $W_k$ lies between $v_k$ and $V^\ua_k$, Potter's bounds~\cite[Thm.~1.5.6]{MR1015093} imply that, on $\mathcal{E}_{n,\ve}$,
\[
\max_{k\in\br{n}}
\max
\left\{
\left|\frac{\eta(W_k)}{\eta(v_k)}\right|,
\left|\frac{\eta(v_k)}{\eta(W_k)}\right|
\right\}
\le C_\ve,
\]
where $C_\ve$ is a deterministic constant depending only on $\eta$ and its indices of regular variation.\\

\noindent Thus, on $\mathcal{E}_{n,\ve}$, we obtain 
\[
\sqrt{n}|Z^\ua_k-\psi_k|
\le M_{\mathcal{V}}\,C_\ve\,
\eta(v_k)\,\kappa(v_k)\,\sqrt{v_k(1-v_k)},
\]
for $k\in\br{n}$. The assumption on
\[
\varphi(u)\coloneqq \varpi(u)[\eta(u)\kappa(u)\sqrt{u(1-u)}]^r
\]
and Lemma~\ref{lem:Riemann-sums} yield $T_n^\delta\to T^\delta$, where
\begin{align*}
T_n^\delta
\coloneqq
\frac{1}{n}\sum_{k=1}^n
\varphi(v_k)\1_{\{v_k<\delta\}\cup\{v_k>1-\delta\}},
\quad\quad\quad\quad\quad
T^\delta
\coloneqq
\int_{(0,\delta)\cup(1-\delta,1)}
\varphi(u)\dd u<\infty.
\end{align*}
Hence, on $\mathcal{E}_{n,\ve}$, after relabelling the constant $C_\ve$ if necessary,
\[
S_n-S_n^\delta
=
n^{\frac{r}{2}-1}
\sum_{k=1}^n
w_k|Z^\ua_k-\psi_k|^r
\1_{\{v_k<\delta\}\cup\{v_k>1-\delta\}} \le T_n^\delta M_{\mathcal{V}}^rC_\ve^r.
\]
Moreover, by monotone convergence, $T^\delta\to 0$ as $\delta\da 0$. Hence, for every $\ve'>0$, we have
\[
\p[|S_n-S_n^\delta|>\ve']
\le
\p\bigl[
T_n^\delta M_{\mathcal V}^rC_\ve^r
>\ve' \bigr]
+\ve.
\]
Taking limits as $n$ tends to infinity, $\delta\da 0$ and $\ve\da 0$, completes the proof, since we obtain
\[
\limsup_{\delta\da 0}
\limsup_{n\to\infty}
\p][|S_n-S_n^\delta|>\ve']=0.\qedhere
\]
\end{proof}

\begin{proof}[Proof of Theorem~\ref{thm:weak_lim_proxy_rand}]
We follow a similar strategy as in the proof of Theorem~\ref{thm:weak_lim_proxy_det}. Because $\bm{Z}$ and $\bm{\xi}$ are independent i.i.d. sequences, their respective uniform empirical quantile processes, $\bm{V}_n$ and $\bm{U}_n$, are independent. Each of these processes can be analysed as before. In particular, for any $\delta>0$, on $D[\delta,1-\delta]$ we have
\[
\sqrt{n}(Z^\ua_{\lceil nu \rceil} - \xi^\ua_{\lceil nu \rceil}) 
\stackrel{Law}{\rightarrow} (\beta_u - \tilde\beta_u)\eta(u) 
\eqd \sqrt{2} \beta_u\eta(u),
\]
as $n$ tends to infinity, where $\tilde\beta$ is an independent standard Brownian bridge.\\

\noindent Let $v_k\coloneqq k/(n+1)$, for $k\in\br{n}$, as before. Since $\xi^\ua_{\lceil nu \rceil}$ converges to $\Psi^{-1}(u)$ uniformly on $[\delta,1-\delta]$ with probability one, and $\omega$ is continuous and bounded, $\omega(\xi^\ua_{\lceil nu \rceil})$ converges to $\omega(\Psi^{-1}(u))$ uniformly on $[\delta,1-\delta]$ with probability one. By the continuous mapping theorem and Lemma~\ref{lem:Riemann-sums}, we deduce 
\[
n^{\frac{r}{2}-1}\sum_{k=1}^n 
\omega(\xi^\ua_k)|Z^\ua_k-\xi_k^\ua|^r
\1_{\{\delta\le v_k\le 1-\delta\}}
\stackrel{Law}{\rightarrow}
2^{r/2}\int_\delta^{1-\delta} 
\omega(\Psi^{-1}(u)) \eta(u)^r|\beta_u|^r\dd u.
\]
As in the proof of Theorem~\ref{thm:weak_lim_proxy_det}, the case where $\varpi$ is supported on a compact subset of $(0,1)$ follows easily. Hence, it remains to consider the other situation, for which we only need to bound the tail terms, again denoted by $S_n-S_n^\delta$.\\

\noindent Let $B_\delta\coloneqq(0,\delta)\cup(1-\delta,1)$. Using the inequality
\[
|a-b|^r \le 2^{r-1}(|a|^r+|b|^r),
\]
valid for $a,b\in\R$ and $r\ge 1$, we obtain
\[
|S_n-S_n^\delta|
\le 2^{r-1} \left[ T_n^{(\delta,1)} + T_n^{(\delta,2)} \right],
\]
where
\begin{align*}
T_n^{(\delta,1)}
&\coloneqq 
n^{\frac{r}{2}-1}
\sum_{v_k\in B_\delta} 
\omega(\xi^\ua_k)|Z^\ua_k-\Psi^{-1}(v_k)|^r,\\
T_n^{(\delta,2)}
&\coloneqq 
n^{\frac{r}{2}-1}
\sum_{v_k\in B_\delta} 
\omega(\xi^\ua_k)|\xi_k^\ua-\Psi^{-1}(v_k)|^r.
\end{align*}
It suffices to show that
\begin{equation}
\label{eq:Tj_delta}
\lim_{\delta\da 0}\limsup_{n\to\infty}
\p[T_n^{(\delta,j)}>\ve]=0,
\end{equation}
for every $\ve>0$ and $j=1,2$.\\

\noindent Consider the same probability space of~\cite[Cor.~4.3.1]{MR815960}, constructed for both $\bm{Z}$ and $\bm\xi$, and let $M$ be the largest of the finite random variables $M_{\mathcal{V}}$ in~\eqref{eq:M_V} corresponding to $\bm{Z}$ and~$\bm\xi$. Given a sufficiently small $\ve>0$, by~\cite[Ineq.~1~\&~2,~p.~415]{MR3396731}, there exists a constant $0<c_\ve<1$ such that the events
\begin{align*}
\mathcal{E}^V_{n,\ve}
&\coloneqq
\big\{
c_\ve v_k \le V^\ua_k \le c_\ve^{-1}v_k,
\quad
c_\ve(1-v_k)
\le 1-V^\ua_k
\le c_\ve^{-1}(1-v_k),
\quad\text{for all }k\in\br n
\big\},\\
\mathcal{E}^U_{n,\ve}
&\coloneqq
\big\{
c_\ve v_k
\le U^\ua_k
\le c_\ve^{-1}v_k,
\quad
c_\ve(1-v_k)
\le 1-U^\ua_k
\le c_\ve^{-1}(1-v_k),
\quad
\text{for all }k\in\br n
\big\},
\end{align*}
satisfy
\[
\p[\mathcal{E}^V_{n,\ve}]
=
\p[\mathcal{E}^U_{n,\ve}]
\ge 1-\ve,
\]
for all $n\in\N$. Then, as in the proof of Theorem~\ref{thm:weak_lim_proxy_det}, on the event $\mathcal{E}^V_{n,\ve}$ the random variables $|Z^\ua_k-\Psi^{-1}(v_k)|$ are bounded by
\[
n^{-1/2}M C_\ve \eta(v_k)\kappa(v_k)\sqrt{v_k(1-v_k)},
\]
for $k\in\br{n}$. Similarly, on $\mathcal{E}^U_{n,\ve}$, the variables $|\xi^\ua_k-\Psi^{-1}(v_k)|$ satisfy the same bound.\\

\noindent Thus, on $\mathcal{E}^V_{n,\ve}\cap\mathcal{E}^U_{n,\ve}$, Potter's bound~\cite[Thm.~1.5.6]{MR1015093}, applied at $0$ and $1$ to the regularly varying function $\varpi\ge\omega\circ\Psi^{-1}$, gives, for some $K_\ve\ge 1$,
\[
T_n^{(\delta,1)}
\le 
M^r K_\ve
\frac{1}{n}
\sum_{v_k\in B_\delta} 
\varpi(v_k)\big[\kappa(v_k)\eta(v_k)\sqrt{v_k(1-v_k)}\big]^r.
\]
By the hypothesis on
\[
\varphi(u)\coloneqq
\varpi(u)[\kappa(u)\eta(u)\sqrt{u(1-u)}]^r
\]
and Lemma~\ref{lem:Riemann-sums}, the right-hand side converges to
\[
M^r K_\ve
\int_{B_\delta}\varphi(u)\dd u.
\]
as $n$ tends to infinity. Hence, we may establish~\eqref{eq:Tj_delta} for $j=1$ as in the proof of Theorem~\ref{thm:weak_lim_proxy_det}. The proof for $j=2$, that is, the control of $T_n^{(\delta,2)}$, is completely analogous, using the corresponding high-probability event. This completes the proof.
\end{proof}

\begin{proof}[Proof of Theorem~\ref{thm:Kol-Gauss-dependent}]
We will bound $\|E_n\|_\infty$, where 
$$E_n(x)\coloneqq n^{-1}\sum_{i=1}^n (\1_{\{W_i \le x\}} - \Phi(x))$$ 
is the empirical process. Expanding the indicator function in $L^2(\R, \Phi)$ using Hermite polynomials $H_m$,
\[
\1_{\{W_i \le x\}} - \Phi(x) 
= \sum_{m=1}^\infty \frac{J_m(x)}{m!} H_m(W_i),
\]
where the coefficients are given by $J_m(x) = \E[\1_{\{W \le x\}} H_m(W)] = -\Phi'(x)H_{m-1}(x)$ for any standard normal $W$. Substituting this expansion into $E_n(x)$, we decompose the empirical process into its leading rank-1 projection $T_{n}(x)$ and a remainder series $R_n(x)$ encompassing all higher-order chaos terms ($m \ge 2$):
\[
E_n = T_n + R_n,
\quad 
T_n(x)\coloneqq-\Phi'(x) \frac{1}{n} \sum_{i=1}^n W_i,
\quad R_n(x)\coloneqq\sum_{m=2}^\infty \frac{J_m(x)}{m!} \Bigg( \frac{1}{n} \sum_{i=1}^n H_m(W_i) \Bigg).
\]

\noindent Taking the supremum over $x$ yields:
\[
\sup_{x \in \R} |T_{n}(x)| \le \big(\sup_{x \in \R} \Phi'(x)\big) \Bigg| \frac{1}{n} \sum_{i=1}^n W_i \Bigg| = \frac{1}{\sqrt{2\pi}} \Bigg| \frac{1}{n} \sum_{i=1}^n W_i \Bigg|.
\]
Note that $\E[(\sum_{i=1}^n W_i)^2] = \sum_{i=1}^n\sum_{j=1}^n \Cov[W_i, W_j] \le D_n$. Thus, we have
\[
\sup_{x\in\R} |T_n(x)| 
= \Oh_{L^2}\big(n^{-1}\sqrt{D_n}\big).
\]

\noindent The orthogonality condition 
$$\E[H_m(W_i) H_k(W_j)] = \Cov[W_i, W_j]^m \cdot \1_{\{m=k\}}m!.$$ 
togheter with the fact that $|\Cov[W_i, W_j]| \le 1$, yields the identity
\[
\Var\Bigg[\frac{1}{n} \sum_{i=1}^n H_m(W_i)\Bigg] 
= \frac{m!}{n^2} \sum_{i=1}^n \sum_{j=1}^n \Cov[W_i, W_j]^m 
\le \frac{m! D_n}{n^2}.
\]
Hence, for any $x < y$, the variance of $R_n(y) - R_n(x)$ is 
\begin{align*}
\E[(R_n(y) - R_n(x))^2] 
&= \sum_{m=2}^\infty \frac{(J_m(y) - J_m(x))^2}{(m!)^2} \mathrm{Var}\Bigg[\frac{1}{n} \sum_{i=1}^n H_m(W_i)\Bigg] \\
&\le \frac{D_n}{n^2} \sum_{m=2}^\infty \frac{(J_m(y) - J_m(x))^2}{m!}.
\end{align*}
By Parseval's identity, the sum over all $m \ge 1$ is exactly the variance of $\1_{\{x < W \le y\}}$, so
\[
\E[(R_n(y) - R_n(x))^2]
\le \frac{D_n}{n^2}
\sum_{m=1}^\infty \frac{(J_m(y) - J_m(x))^2}{m!} 
= \frac{D_n}{n^2}\Var(\1_{\{x < W \le y\}}) 
\le \frac{D_n}{n^2}[\Phi(y) - \Phi(x)].
\]

\noindent Write
\[
\rho(k)\coloneqq\Cov[W_1,W_{1+k}].
\]
Fix $\varepsilon\in(0,1/3)$. Since $\rho(k)$ tends to zero, there exists an integer $q\geq1$ such that
\[
|\rho(k)|\leq\varepsilon
\]
for every $k\geq q$. For $a\in\{1,\ldots,q\}$, set
\[
R_{n,a}(x)
\coloneqq
\sum_{\ell=2}^{\infty}
\frac{J_\ell(x)}{\ell!}
\bigg(
\frac1n
\sum_{\substack{1\leq i\leq n\\ i\equiv a\!\!\!\pmod q}}
H_\ell(W_i)
\bigg).
\]
Then
\[
R_n=\sum_{a=1}^q R_{n,a}.
\]
Since $q$ is fixed, it is enough to prove that $nD_n^{-1/2}R_{n,a}$ is tight for every $a=1,\dots,q$. We fix such an $a$ and, to simplify the notation, write $R_n$ in place of $R_{n,a}$ throughout the rest of the argument. A prime on a summation sign (e.g., $\sum_{i=1}^n{}^\prime$) indicates that the summation is restricted to this fixed residue class modulo $q$.\\

\noindent We now control the remainder term $R_n$. To this end, we prove that the sequence $n D_n^{-1/2}R_n$ is tight. We proceed through a modification of Billingsley's argument, following the tightness strategy of Campese, Nourdin and Nualart~\cite{CampeseNourdinNualart}. More precisely, we will show that there exists a constant $C>0$ such that, for every $x\le y\le z$,
\begin{align}\label{eq:billingsgoal}
&\E\left[
|R_n(y)-R_n(x)|^2
|R_n(z)-R_n(y)|^2
\right]\notag\\
&\qquad\le
C\frac{D_n^2}{n^4}
\left[
\bigl(\Phi(z)-\Phi(x)\bigr)^{3/2}
+
\frac{\Phi(z)-\Phi(x)}{D_n}
\right].
\end{align}
The first term is the usual one appearing in Billingsley's criterion, while the second one will be treated separately using the fact that $D_n\geq n$. Writing
\[
G_{x,y}(w)
\coloneqq
\1_{\{x<w\le y\}}
-
\bigl(\Phi(y)-\Phi(x)\bigr)
-
\bigl(J_1(y)-J_1(x)\bigr)w,
\]
we have
\[
R_n(y)-R_n(x)
=
\frac1n\sum_{i=1}^{n}{}' G_{x,y}(W_i),
\]
where $G_{x,y}$ has Hermite rank at least two. Therefore,
\begin{align}\label{eq:refdecomp}
\E\!\left[
|R_n(y)-R_n(x)|^2
|R_n(z)-R_n(y)|^2
\right]=
\frac1{n^4}
\sum_{i,j,k,\ell=1}^{n}\!\!\!{}'
\E\!\left[
G_{x,y}(W_i)
G_{x,y}(W_j)
G_{y,z}(W_k)
G_{y,z}(W_\ell)
\right].
\end{align}
We split the above sum according to the coincidence pattern of the indices. More precisely, let
\begin{align*}
\mathcal I_4
&\coloneqq
\{(i,j,k,\ell): i,j,k,\ell \text{ are all distinct}\},\\
\mathcal I_{2,1,1}
&\coloneqq
\{(i,j,k,\ell): \text{exactly three indices are distinct}\},\\
\mathcal I_{2,2}
&\coloneqq
\{(i,j,k,\ell): \text{exactly two distinct indices, each appearing twice}\},\\
\mathcal I_{3,1}
&\coloneqq
\{(i,j,k,\ell): \text{exactly two distinct indices, one appearing three times}\},\\
\mathcal I_1
&\coloneqq
\{(i,j,k,\ell): i=j=k=\ell\}.
\end{align*}

We will the following technical estimate, in order to implement the estimations from \cite{MR471045}
\begin{equation}
\label{eq:rho-square-Dn}
n\sum_{k=0}^{n-1}\rho(k)^2
\le
3D_n.
\end{equation}
In order to show this, denote $m=\lfloor n/2\rfloor$, we have
\[
n\sum_{k=0}^{m}\rho(k)^2
\le
n\sum_{k=0}^{m}|\rho(k)|
\le
D_n.
\]
Moreover, the Cauchy-Schwarz inequality yields
\[
\bigg(\sum_{k=m+1}^{n-1}\rho(k)^2\bigg)^2
\le
\E\bigg[\bigg(\sum_{k=m+1}^{n-1}\rho(k)W_{k+1}\bigg)^2\bigg]
\le
\sum_{k,\ell=m+1}^{n-1}|\rho(k)\rho(\ell)\rho(|k-\ell|)|.
\]
Since $|k-\ell|\le m$ whenever $k,\ell>m$, we obtain
\[
\sum_{k,\ell=m+1}^{n-1}|\rho(k)\rho(\ell)\rho(|k-\ell|)|
\le
\bigg(1+2\sum_{h=1}^{m}|\rho(h)|\bigg)\sum_{k=m+1}^{n-1}\rho(k)^2.
\]
Hence
\[
n\sum_{k=m+1}^{n-1}\rho(k)^2
\le
2D_n,
\]
which proves~\eqref{eq:rho-square-Dn}.\\

\noindent The proof of Proposition~3.1 in~\cite{MR471045} also shows that the constant in Lemma~4.5 is bounded by a constant depending only
on $p$ and $\varepsilon$, multiplied by the product of the $L^2$-norms of the functions involved. Moreover, for every $r\geq2$,
\[
\|G_{x,y}\|_{L^r}
\leq
C_r\bigl(\Phi(y)-\Phi(x)\bigr)^{1/r}.
\]
For $(i,j,k,\ell)\in\mathcal I_4$, the expectation appearing in \eqref{eq:refdecomp} involves four functions of Hermite rank at least two. Therefore, Lemma~4.5 of~\cite{MR471045}, together with~\eqref{eq:rho-square-Dn}, shows that the contribution of these indices is bounded by
\[
C\frac{D_n^2}{n^4}
\bigl(\Phi(y)-\Phi(x)\bigr)
\bigl(\Phi(z)-\Phi(y)\bigr)
\le
\frac{C}{4}\frac{D_n^2}{n^4}
\bigl(\Phi(z)-\Phi(x)\bigr)^2.
\]
\noindent For $(i,j,k,\ell)\in\mathcal I_{2,1,1}$, exactly one index is repeated. Up to a relabeling, the corresponding expectations are of the forms
\[
\E\!\left[
G_{x,y}(W_i)^2G_{y,z}(W_j)G_{y,z}(W_k)
\right],
\qquad
\E\!\left[
G_{y,z}(W_i)^2G_{x,y}(W_j)G_{x,y}(W_k)
\right],
\]
or
\[
\E\!\left[
G_{x,y}(W_i)G_{y,z}(W_i)
G_{x,y}(W_j)G_{y,z}(W_k)
\right],
\]
where $i,j,k$ are pairwise distinct. In each case, the two functions corresponding to the indices appearing only once have Hermite rank at least two. Therefore, Lemma~4.5 of~\cite{MR471045}, together with~\eqref{eq:rho-square-Dn}, yields
\[
C\frac{D_n^2}{n^4}
\|G_{x,y}^2\|_{L^2}
\|G_{y,z}\|_{L^2}^2.
\]
An application of H\"older's inequality gives
\[
C\frac{D_n^2}{n^4}
\|G_{x,y}\|_{L^4}^2
\|G_{y,z}\|_{L^2}^2
\le
C\frac{D_n^2}{n^4}
\bigl(\Phi(y)-\Phi(x)\bigr)^{1/2}
\bigl(\Phi(z)-\Phi(y)\bigr).
\]
Similarly, the second type contributes at most
\[
C\frac{D_n^2}{n^4}
\|G_{y,z}^2\|_{L^2}
\|G_{x,y}\|_{L^2}^2
\le
C\frac{D_n^2}{n^4}
\bigl(\Phi(y)-\Phi(x)\bigr)
\bigl(\Phi(z)-\Phi(y)\bigr)^{1/2}.
\]
For the third type, Lemma~4.5 gives
\[
C\frac{D_n^2}{n^4}
\|G_{x,y}G_{y,z}\|_{L^2}
\|G_{x,y}\|_{L^2}
\|G_{y,z}\|_{L^2}.
\]
By H\"older's inequality argument,  this term becomes bounded by
\[
C\frac{D_n^2}{n^4}
\bigl(\Phi(y)-\Phi(x)\bigr)^{3/4}
\bigl(\Phi(z)-\Phi(y)\bigr)^{3/4}.
\]
The contribution of $\mathcal I_{2,1,1}$ is then bounded by
\[
C\frac{D_n^2}{n^4}
\left[
\bigl(\Phi(y)-\Phi(x)\bigr)^{1/2}
\bigl(\Phi(z)-\Phi(y)\bigr)
+
\bigl(\Phi(y)-\Phi(x)\bigr)
\bigl(\Phi(z)-\Phi(y)\bigr)^{1/2}
\right].
\]
From the analysis above it follows that the previous expression is at most
\[
C\frac{D_n^2}{n^4}
\bigl(\Phi(z)-\Phi(x)\bigr)^{3/2}.
\]
\noindent For $(i,j,k,\ell)\in\mathcal I_{2,2}$, the corresponding expectations are, up to a relabeling, of the forms
\[
\E\!\left[
G_{x,y}(W_i)^2G_{y,z}(W_j)^2
\right]
\]
and
\[
\E\!\left[
G_{x,y}(W_i)G_{y,z}(W_i)
G_{x,y}(W_j)G_{y,z}(W_j)
\right],
\]
for $i\neq j$. Observe that
\[
\E\!\left[G_{x,y}(W)^2\right]
\le
\Phi(y)-\Phi(x),
\qquad
\E\!\left[|G_{x,y}(W)|^4\right]
\le
C\bigl(\Phi(y)-\Phi(x)\bigr),
\]
and analogously for $G_{y,z}$. Hence, by Gebelein's inequality,
\[
\begin{aligned}
\E\!\left[
G_{x,y}(W_i)^2G_{y,z}(W_j)^2
\right]
&\le
\bigl(\Phi(y)-\Phi(x)\bigr)
\bigl(\Phi(z)-\Phi(y)\bigr)\\
&\quad+
C|\Cov[W_i,W_j]|
\sqrt{
\bigl(\Phi(y)-\Phi(x)\bigr)
\bigl(\Phi(z)-\Phi(y)\bigr)
}.
\end{aligned}
\]
The same bound holds, in absolute value, for the second type of expectation, by applying Gebelein's inequality to $G_{x,y}G_{y,z}$. Summing over $\mathcal I_{2,2}$, using $D_n\ge n$, and absorbing the finite number of possible coincidence patterns into the constant, we obtain
\[
\begin{aligned}
&\frac{1}{n^4}
\sum_{(i,j,k,\ell)\in\mathcal I_{2,2}}^{\prime}
\left|
\E\!\left[
G_{x,y}(W_i)
G_{x,y}(W_j)
G_{y,z}(W_k)
G_{y,z}(W_\ell)
\right]
\right|\\
&\qquad\le
C\frac{D_n^2}{n^4}
\left[
\bigl(\Phi(z)-\Phi(x)\bigr)^2
+
\frac{\Phi(z)-\Phi(x)}{D_n}
\right].
\end{aligned}
\]
\noindent For $(i,j,k,\ell)\in\mathcal I_{3,1}$, the corresponding expectations are, up to a relabeling, of the forms
\[
\E\!\left[
G_{x,y}(W_i)^2G_{y,z}(W_i)G_{y,z}(W_j)
\right]
\quad
\quad
\quad
\quad
\quad
\quad\E\!\left[
G_{x,y}(W_i)G_{y,z}(W_i)^2G_{x,y}(W_j)
\right],
\]
for $i\neq j$. Since $G_{x,y}$ and $G_{y,z}$ are centered, Gebelein's inequality and H\"older's inequality give
\[
\begin{aligned}
&\left|
\E\!\left[
G_{x,y}(W_i)^2G_{y,z}(W_i)G_{y,z}(W_j)
\right]
\right|\\
&\qquad\le
C|\Cov[W_i,W_j]|
\bigl(\Phi(y)-\Phi(x)\bigr)^{1/4}
\bigl(\Phi(z)-\Phi(y)\bigr)^{3/4}.
\end{aligned}
\]
The other type of expectation satisfies the analogous bound with the two increments interchanged. Hence, summing over $\mathcal I_{3,1}$, we obtain
\[
C\frac{D_n}{n^4}\bigl(\Phi(z)-\Phi(x)\bigr)
=
C\frac{D_n^2}{n^4}
\frac{\Phi(z)-\Phi(x)}{D_n}.
\]

\noindent Finally, for $(i,j,k,\ell)\in\mathcal I_1$, we have $i=j=k=\ell$. By H\"older's inequality and the fourth-moment estimates used above,
\[
\begin{aligned}
\E\!\left[
G_{x,y}(W_i)^2G_{y,z}(W_i)^2
\right]
&\le
\|G_{x,y}(W)\|_{L^4}^2
\|G_{y,z}(W)\|_{L^4}^2\\
&\le
C\sqrt{
\bigl(\Phi(y)-\Phi(x)\bigr)
\bigl(\Phi(z)-\Phi(y)\bigr)
}\le
C\bigl(\Phi(z)-\Phi(x)\bigr).
\end{aligned}
\]
Since $\mathcal I_1$ contains at most $n$ elements and $D_n\geq n$, its contribution is bounded by
\[
C\frac{n}{n^4}\bigl(\Phi(z)-\Phi(x)\bigr)
\le
C\frac{D_n^2}{n^4}
\frac{\Phi(z)-\Phi(x)}{D_n}.
\]
\noindent Combining the estimates for the five coincidence patterns, we obtain~\eqref{eq:billingsgoal}. Define
\[
\widetilde R_n(u)
\coloneqq
nD_n^{-1/2}R_n\bigl(\Phi^{-1}(u)\bigr),
\]
and set $\widetilde R_n(0)=\widetilde R_n(1)=0$. Then, for every
$0\leq u\leq v\leq w\leq1$,
\[
\begin{aligned}
&\E\left[
|\widetilde R_n(v)-\widetilde R_n(u)|^2
|\widetilde R_n(w)-\widetilde R_n(v)|^2
\right]
\leq
C\big(
(w-u)^{3/2}
+
(w-u)/D_n
\big).
\end{aligned}
\]
Moreover, the second-moment estimate obtained above gives
\[
\E\left[
|\widetilde R_n(v)-\widetilde R_n(u)|^2
\right]
\leq v-u.
\]

Let
\[
\mathcal T_n\coloneqq\{k/n:\,k=0,\ldots,n\}.
\]
For $u\leq v\leq w$ in $\mathcal T_n$, either $u=w$, in which case the corresponding increments vanish, or $w-u\geq n^{-1}$. Since $D_n\geq n$, we have
\[
\frac{w-u}{D_n}
\leq
\frac{w-u}{n}
\leq
(w-u)^2
\leq
(w-u)^{3/2}.
\]
Consequently,~\eqref{eq:billingsgoal} gives
\[
\E\left[
|\widetilde R_n(v)-\widetilde R_n(u)|^2
|\widetilde R_n(w)-\widetilde R_n(v)|^2
\right]
\leq
C(w-u)^{3/2},
\]
for $u\leq v\leq w$ in $\mathcal T_n$.  For $u\in[0,1)$, set $\pi_n(u)\coloneqq \lfloor nu\rfloor/n$ with  $\pi_n(1)\coloneqq1,$ and define
\[
\widetilde R_n^\circ(u)
\coloneqq
\widetilde R_n(\pi_n(u)).
\]
The usual Billingsley argument, applied on the grid $\mathcal T_n$, together with the second-moment estimate obtained above, shows that $\widetilde R_n^\circ$ is tight in $D[0,1]$.\\

It remains to compare $\widetilde R_n$ and $\widetilde R_n^\circ$. The same decomposition used to prove~\eqref{eq:billingsgoal}, with the same increment in all four factors, gives
\[
\E\left[
|\widetilde R_n(v)-\widetilde R_n(u)|^4
\right]
\leq
C\left[
(v-u)^{3/2}
+
\frac{v-u}{D_n}
\right],
\]
for $0\leq u\leq v\leq1$. Hence, by Markov's inequality,
\[
\begin{aligned}
&\p\left[
\max_{0\leq k<n}
\left|
\widetilde R_n\left(\frac{k+1}{n}\right)
-
\widetilde R_n\left(\frac{k}{n}\right)
\right|>\eta
\right]\\
&\qquad\leq
\frac{C}{\eta^4}
\sum_{k=0}^{n-1}
\left[
n^{-3/2}
+
\frac{1}{nD_n}
\right]
\leq
\frac{C}{\eta^4}
\left[
n^{-1/2}
+
D_n^{-1}
\right],
\end{aligned}
\]
which converges to zero.\\

\noindent Using $J_1(x)=-\Phi'(x)$, we may write
\[
\widetilde R_n(u)
=
\frac{1}{\sqrt{D_n}}
\sum_{i=1}^{n}{}'
\left[
\1_{\{\Phi(W_i)\leq u\}}
-u
+\Phi'(\Phi^{-1}(u))W_i
\right].
\]
The function
\[
u\longmapsto\Phi'(\Phi^{-1}(u)),
\qquad 0<u<1,
\]
extends continuously to $[0,1]$ by assigning value zero at the endpoints. Set
\[
\omega_n
\coloneqq
\sup_{\substack{u,v\in[0,1]\\|u-v|\leq1/n}}
\left|
\Phi'(\Phi^{-1}(u))
-
\Phi'(\Phi^{-1}(v))
\right|.
\]
Then $\omega_n\to0$. By the monotonicity of
\[
u\longmapsto
\sum_{i=1}^{n}{}'\1_{\{\Phi(W_i)\leq u\}},
\]
for $k/n\leq u\leq(k+1)/n$ we have
\[
\begin{aligned}
\left|
\widetilde R_n(u)
-
\widetilde R_n\left(\frac{k}{n}\right)
\right|
&\leq
\left|
\widetilde R_n\left(\frac{k+1}{n}\right)
-
\widetilde R_n\left(\frac{k}{n}\right)
\right|+
\frac{2}{\sqrt{D_n}}
+
2\omega_n
\left|
\frac{1}{\sqrt{D_n}}
\sum_{i=1}^{n}{}'W_i
\right|.
\end{aligned}
\]
Moreover,
\[
\E\left[
\left|
\frac{1}{\sqrt{D_n}}
\sum_{i=1}^{n}{}'W_i
\right|^2
\right]
\leq1.
\]
It follows that
\[
\sup_{u\in[0,1]}
\left|
\widetilde R_n(u)
-
\widetilde R_n^\circ(u)
\right|
\cip0.
\]
Since $\widetilde R_n^\circ$ is tight, the sequence $\widetilde R_n$ is tight in $D[0,1]$.\\

\noindent Returning to the original notation, we have proved that $nD_n^{-1/2}R_{n,a}$ is tight for every $a\in\{1,\ldots,q\}$. Since
\[
R_n=\sum_{a=1}^q R_{n,a}
\]
and $q$ is fixed, we obtain
\[
\sup_{x\in\R}|R_n(x)|
\leq
\sum_{a=1}^q\sup_{x\in\R}|R_{n,a}(x)|
=
\Oh_{\p}\left(n^{-1}\sqrt{D_n}\right).
\]
Finally, applying the triangle inequality to the initial decomposition yields:
\[
\sup_{x \in \R} |E_n(x)| \le \sup_{x \in \R} |T_{n}(x)| + \sup_{x \in \R} |R_n(x)| 
= \Oh_{\p}\big(n^{-1}\sqrt{D_n}\big).\qedhere
\]
\end{proof}
\noindent \textbf{Acknowledgements}\\
Arturo Jaramillo Gil was supported by
the grant CBF2023-2024-2088. Jorge Gonz\'alez C\'azares was supported by DGAPA-PAPIIT grant 36-IA104425 and EPSRC grant EP/V009478/1.

\bibliographystyle{plain}
\bibliography{ref}

\end{document}